\newtheorem{lemma}{Lemma}[section]
\newtheorem{theorem}[lemma]{Theorem}
\newtheorem{corollary}[lemma]{Corollary}
\newtheorem{proposition}[lemma]{Proposition}
\theoremstyle{definition}
\newtheorem*{definition}{Definition}
\newtheorem{remark}[lemma]{Remark}
\newtheorem{example}[lemma]{Example}
\numberwithin{equation}{section}
\def\de{\overset{\mathrm{def}}{=}}
\def\mi\mathfrak{i}
\def\one{{\mathbf{1}}}
\def\DS{\displaystyle}
\def\diam{{\rm diam}}
\def\eps{{\varepsilon}}
\def\teps{{\tilde\varepsilon}}
\def\Card{{\rm Card}}
\def\Const{{\rm Const}}
\def\Prob{{\mathbb{P}}}
\def\EXP{{\mathbb{E}}}
\def\bbF{\mathbb{F}}
\def\bbL{\mathbb{L}}
\def\bbS{\mathbb{S}}
\def\naturals{\mathbb{N}}
\def\Tor{\mathbb{T}}
\def\reals{\mathbb{R}}
\def\integers{\mathbb{Z}}
\def\bE{\mathbf{E}}
\def\bP{\mathbf{P}}
\def\bd{\mathbf{d}}
\def\bv{\mathbf{v}}
\def\bx{\mathbf{x}}
\def\by{\mathbf{y}}
\def\brho{\boldsymbol{\rho}}
\def\bdeta{\boldsymbol{\eta}}
\def\bupsilon{{\boldsymbol{\upsilon}}}
\def\bDelta{{\boldsymbol{\Delta}}}
\def\brA{{\bar A}}
\def\brC{{\bar C}}
\def\brrC{{\bar\brC}}
\def\brK{{\bar K}}
\def\brN{{\bar N}}
\def\brP{{\bar P}}
\def\brR{{\bar R}}
\def\brX{{\bar X}}
\def\brY{{\bar Y}}
\def\brc{{\bar c}}
\def\brk{{\bar k}}
\def\brl{{\bar l}}
\def\brupsilon{{\bar\upsilon}}
\def\brv{{\bar v}}
\def\brx{{\bar x}}
\def\breps{{\bar\eps}}
\def\brlambda{{\bar \lambda}}
\def\brxi{{\bar \xi}}
\def\brrho{{\bar\rho}}
\def\brtheta{{\bar \theta}}
\def\brzeta{{\bar \zeta}}
\def\cA{\mathcal{A}}
\def\cB{\mathcal{B}}
\def\cI{\mathcal{I}}
\def\cF{\mathcal{F}}
\def\cG{\mathcal{G}}
\def\cH{\mathcal{H}}
\def\cM{\mathcal{M}}
\def\cN{\mathcal{N}}
\def\cO{\mathcal{O}}
\def\cP{\mathcal{P}}
\def\cQ{\mathcal{Q}}
\def\cR{\mathcal{R}}
\def\cS{\mathcal{S}}
\def\cT{\mathcal{T}}
\def\cV{\mathcal{V}}
\def\cW{\mathcal{W}}
\def\cX{\mathcal{X}}
\def\cY{\mathcal{Y}}
\def\fP{\mathfrak{P}}
\def\fQ{\mathfrak{Q}}
\def\fX{\mathfrak{X}}
\def\fh{\mathfrak{h}}
\def\fl{\mathfrak{l}}
\def\fm{\mathfrak{m}}
\def\fn{\mathfrak{n}}
\def\fp{\mathfrak{p}}
\def\fq{\mathfrak{q}}
\def\fs{\mathfrak{s}}
\def\ft{\mathfrak{t}}
\def\fz{\mathfrak{z}}
\def\hG{{\hat G}}
\def\hW{{\hat W}}
\def\hk{{\hat k}}
\def\hp{{\hat p}}
\def\heps{{\hat{\eps}}}
\def\hPhi{{\hat\Phi}}
\def\tB{{\tilde B}}
\def\tC{{\tilde C}}
\def\tD{{\tilde D}}
\def\tG{{\tilde G}}
\def\tN{{\tilde N}}
\def\tS{{\tilde S}}
\def\tX{{\tilde X}}
\def\tW{{\tilde W}}
\def\tY{{\tilde Y}}
\def\tc{{\tilde c}}
\def\tk{{\tilde k}}
\def\tfm{{\tilde{\fm}}}
\def\tp{{\tilde p}}
\def\tq{{\widetilde{q}}}
\def\tfq{{\widetilde{\fq}}}
\def\tu{{\tilde u}}
\def\ty{{\tilde y}}
\def\tbeta{{\tilde\beta}}
\def\tgamma{{\tilde\gamma}}
\def\tdelta{{\tilde\delta}}
\def\tDelta{{\tilde{\mathbf{\Delta}}}}
\def\teps{{\tilde\varepsilon}}
\def\tlambda{{\tilde\lambda}}
\def\tpi{{\tilde\pi}}
\def\ttau{{\tilde\tau}}
\def\trho{{\tilde\rho}}
\def\tzeta{{\tilde\zeta}}
\def\txi{{\tilde\xi}}
\def\tcB{{\tilde{\mathcal{B}}}}
\def\beq{\begin{equation}}
\def\eeq{\end{equation}}
\begin{document}
\title[Constructive approach to limit theorems for random walks]
{Constructive approach to limit theorems for recurrent diffusive random walks
on a strip.}

\author{D. Dolgopyat and I. Goldsheid}
\address{Dmitry Dolgopyat: Department of
Mathematics and Institute of Physical Science and Technology,
University  of Maryland, College Park, MD, 20742, USA}
\address{Ilya Goldsheid: School of Mathematical Sciences, Queen
Mary University of London, London
E1 4NS, Great Britain}

\keywords{random walks on a strip, Green function,  RWRE, CLT,
Local Limit Theorem, environment viewed from the particle}
\subjclass[2010]{Primary: 60K37; Secondary: 60F15}

\maketitle

\begin{abstract}
We consider recurrent diffusive random walks on a strip. We present constructive conditions  on Green functions
of finite sub-domains which imply a Central Limit Theorem with polynomial error bound, a Local Limit Theorem, and
mixing of environment viewed by the particle process. Our conditions can be verified for a wide class of environments
including independent environments, quasiperiodic environments, and environments which are asymptotically constant
at infinity. The conditions presented deal with a fixed environment, in particular, no stationarity conditions are imposed.
\end{abstract}

\section{Introduction.}
The one-dimensional random walk in random environment (RWRE) is a classical model in probability which was first considered in
\cite{So} and \cite{KKS} in 1975.
Remarkably, the behavior of the RWRE turns out to be quite different from that of the
simple random walk. Perhaps the most famous example of that is the theorem of Sinai (\cite{S1}) which states that
for the nearest neighbor random walks in the i.i.d. environment in the recurrent case
the walker typically is in a $\cO(\ln^2 n)$ neighborhood of
the origin after $n$ steps.

For walks on $\mathbb{Z}$ with bounded jumps, it was shown that in the recurrent case the Sinai behavior and the classical CLT
are the only possible scenarios for two important classes of environments. Namely, \cite{BG2} proves this for independent environments and
\cite{Br2} considers quasiperiodic Diophantine environments and proves that
the CLT holds with probability one.
Recently this result from \cite{Br2}  was extended in \cite{DG4} to RWRE on a strip, a natural generalization
of a random walk on $\integers$ with bounded jumps which was introduced in \cite{BG1}.
In fact, it is shown in \cite{DG4} that in both the  i.i.d and the quasiperiodic
Diophantine environments the
CLT holds in the recurrent case if and only if the potential is bounded (the precise definition of the
potential is given in Section \ref{SSBP}, see equation \eqref{defPhi}).
This is why it is natural and important to study recurrent RWs in a bounded potential.
The recurrent RWs in bounded potential are the main object studied in this paper.
However, in contrast to \cite{DG4} we deal with a fixed environment.
We develop a constructive approach which relates directly
the rate of convergence of ergodic averages for some specific observables to the CLT.
For a typical realization of a random environment our results recover the previously known
results and, moreover, we obtain new information also for RWRE. Namely, in the quasiperiodic Diophantine
case, the CLT is proven in \cite{DG4} only for a set of environments of full measure, while our present
methods imply that the CLT holds for all such environments without exception.

Our approach has several additional benefits.
It allows us to

\begin{itemize}
\item obtain explicit rate of convergence in the CLT;
\item establish the almost sure mixing estimates for environment seen by the particle
thus extending the results of \cite{L} and \cite{S2};
\item prove local limit theorems for several classes of environments;
\item apply our method to non stationary environments.
\end{itemize}

Let us describe the main novel techniques of the present paper which are crucial for our approach.
The first one is the asymptotic formula for the Green function in a large finite domain
obtained in Section \ref{ScGreen}.
The derivation of this formula relies on an entirely new approach to the analysis of the martingale
and the invariant measure equations which was recently discovered in \cite{DG5}. This approach is further
developed in this work and leads to new algebraic properties of the solutions to these equations.
The second key ingredient is the weak law of large numbers for the environment
viewed by the particle process. Our proof of the law of large numbers relies on the
Green function estimates and because of that is more transparent and applicable to a much wider class
of observables (the so called self-averaging observables) than the traditional approach
based on  the ergodic theorem (see e.g. \cite{BS, BS1}).

The layout of the paper is as follows. In Section \ref{ScModel} we define the model,
introduce some notation, and provide the necessary background for RW on a strip.
In particular we introduce RW in a bounded potential studied in \cite{DG4}.
In Section \ref{ScApplicatons} we illustrate the main results of our paper by applying them to several important classes of environments.
The precise formulation of the main results in the general case are given later since they are of a more technical nature.
Section \ref{ScGreen} contains bounds for the Green function of the walks with bounded potential.
In Section \ref{ScEnvLLN} these bounds are used to give a constructive proof of ergodicity for the environment viewed by
the particle process, giving a rate of convergence of time averages seen by the walker to the space averages.
In particular, this allows us to control the drift and the variance of the increments of the walker on a mesoscopic scale.
This allows us, in Section~\ref{ScCLT}, to obtain the Central Limit Theorem by the martingale method
and gives an estimate on the rate of convergence for several important classes of environments.
In Section \ref{ScSkew} we consider environments which have different asymptotic behaviors at $+\infty$
and $-\infty$ and show how the arguments of the previous section could be modified to obtain convergence
to the skew Brownian Motion type processes.
 In Section \ref{ScSemiLoc} we use a bootstrap
argument to show that the distribution of the walker's position is the same as for the Brownian Motion on a scale
which is slightly larger than $\cO(1).$ In Section~\ref{ScEnvMix} a local limit theorem for hitting times
is used to obtain mixing of the environment seen by the particle.  In Section~\ref{ScLLT} the results of Sections
\ref{ScSemiLoc} and \ref{ScEnvMix} are combined to obtain the local central limit theorem for the walker's position.

\section{Definition of the model and some preparatory results.}
\label{ScModel}

\subsection{Conventions and notation.}\label{notations}

The following notations and definitions are used throughout the paper.

All vectors and matrices below will be $m$-dimensional where $m$ is the width of the strip.
The dot product of vectors $x$ and $y$ will be denoted by $xy.$

$\mathbf{1}$ is a column vector whose components are all equal to $1$.

$e_i$ is the vector whose $i$-th component is 1 and all other components are 0.

For a vector ${x}=(x_{i})$ and a matrix $A=(a(i,j))$ we set
\[
\left\|  {x}\right\|  \overset{\mathrm{def}}{=}\max_{i}|x_{i}|\text{ which implies } \left\|
A\right\| =\sup_{\left\|  {x}\right\|  =1}\left\|
A{x}\right\|=\max_{i}\sum_{j}|a(i,j)|.
\]
We say that $A$ is strictly positive (and we write $A>0),$ if
all its matrix elements satisfy $a(i,j)>0$. $A$ is called non-negative (and we
write $A\geq0)$, if all $a(i,j)$ are non negative. A similar convention
applies to vectors. Note that if $A$ is a non-negative matrix then
$\left\|  A\right\| = \left\|
A{\one}\right\|$

$\bbS$ denotes the strip, $\bbS=\integers\times \{1, \ldots, m\}.$
Given a function $h:\mathbb{S}\mapsto\mathbb{R}$, we can define a sequence of $\mathbb{R}^m$-vectors
$h_n$ with components $h(n,i)$.
Vice versa, given a sequence of vectors $h_n$, $-\infty<n<\infty$, we define a function $h:\mathbb{S}\mapsto\mathbb{R}$ by
setting $h(n,i)=h_n(i)$, where $h_n(i)$ is the $i^{\textrm{th}}$ component of $h_n$.

\subsection{The Model.}\label{SubScModel}
We recall the definition of the RW on a strip from \cite{BG1}.
Let $\bbL_n=\{(n,i):\,1\le i\le m\}$ be layer $n$ of the strip, $\bbL_n\subset\mathbb{S}$. In our model, the walker is
allowed to jump from a point $(n,i)\in \bbL_n$ only to points in $\bbL_{n-1}$, or $ \bbL_n$, or
$\bbL_{n+1}$.
To define the corresponding transition kernel consider a sequence $\omega$ of triples
$(P_{n},Q_{n},R_{n}),\ -\infty<n<\infty,$ of $m\times m$ non-negative matrices such that for
all $n\in\mathbb{Z}$ the sum $P_{n}+Q_{n}+R_{n}$ is a stochastic matrix:
\begin{equation}\label{stoch}
(P_{n}+Q_{n}+R_{n})\mathbf{1}=\mathbf{1},
\end{equation}
We say that the sequence $\omega$ is the \textit{environment }on the strip $\mathbb{S}$.

The matrix elements of $P_{n}$ are denoted $P_{n}(i,j),$ $1\leq i,j\leq m,$ and
similar notations are used for $Q_{n}$ and $R_{n}.$ We now set
\begin{equation}
\mathfrak{P}(z,z_{1})\overset{\mathrm{def}}{=}\left\{
\begin{array}
[c]{ll}%
P_{n}(i,j) & \mathrm{if\quad}z=(n,i),\ z_{1}=(n+1,j),\\
R_{n}(i,j) & \mathrm{if\quad}z=(n,i),\ z_{1}=(n,j),\\
Q_{n}(i,j) & \mathrm{if\quad}z=(n,i),\ z_{1}=(n-1,j),\\
0 & \mathrm{otherwise.}%
\end{array}
\right.  \label{striptransition}%
\end{equation}

\begin{remark} The study of one-dimensional RW on $\mathbb{Z}$ with jumps of length $\le m$
can be reduced to the study of the above model by mapping $n\in\mathbb{Z}$ to
$\left(\lfloor\frac{n}{m}\rfloor, n-m  \lfloor\frac{n}{m}\rfloor\right)\in\mathbb{S}$,
where $\lfloor\cdot\rfloor$ denotes the integer part.
We refer the reader to \cite{BG1} for the formulas for transition matrices in that case and to
\cite{BG2} for more comments concerning this relationship.
\end{remark}

For a fixed $\omega$ we define a random walk $\xi(t)=(X(t),Y(t)),$ $t\ge 0$, on $\mathbb{S}$
in the usual way: for any starting point $z=(n,i)\in\mathbb{S}$ and fixed $\omega$
the law $\Prob_{\omega,z}$ for the Markov chain $\xi(\cdot)$ is given by
\begin{equation}
\Prob_{\omega,z}\left(  \xi(1)=z_{1},\ldots,\xi(t)=z_{t}\right)  \overset
{\mathrm{def}}{=}\mathfrak{P}_{\omega}(z,z_{1})\mathfrak{P}_{\omega}(z_{1},
z_{2})\cdots\mathfrak{P}_{\omega}(z_{t-1},z_{t}). \label{StripRWRE}%
\end{equation}
Let $\Xi_{z}$ be the set of trajectories $\xi(\cdot)$ starting at $z$. The just defined
$\mathbb{P}_{\omega,z}$ is a probability measure on $\Xi_{z}$; we denote by $\EXP_{\omega,z}$
the expectation with respect to this measure.
\begin{remark}
Fix $\mathfrak{i}\in \{1, \dots, m\}.$
With a slight abuse of notation, we shall often write $\mathbb{P}$ and $\mathbb{E}$ for
$\mathbb{P}_{\omega, (0,\mathfrak{i}) }$ and $\EXP_{\omega,(0, \mathfrak{i})}$.
We shall do that if the environment $\omega$ is
fixed. Since the strip has finite width, all the results proved in the paper will be uniform with respect
to $\mathfrak{i}.$

Also it will often be convenient to write $\xi_t$ for $\xi(t)$ and $X_t$ and $Y_t$ for $X(t)$ and $Y(t)$
respectively.
\end{remark}

Throughout the paper we suppose that the following ellipticity conditions are satisfied:
there is an $\breps>0$ and a positive integer number
$k_0<\infty$ such that for any $n\in\integers$ and any
$i,\,j\in[1,m]$
\begin{equation}\label{EqC2*}
||R_n^{k_0}||\le1-\breps,\ \ ((I-R_n)^{-1}P_n)(i,j)\ge\breps,
\text{ and } ((I-R_n)^{-1}Q_n)(i,j)\ge\breps.
\end{equation}
Note that $((I-R_n)^{-1}P_n)(i,j)$ (respectively $((I-R_n)^{-1}Q_n)(i,j)$) is the probability
that the walker starting from $(n,i)$ reaches $(n+1, j)$ (respectively $(n-1, j)$) at the first
exit from the layer $\bbL_n.$

\begin{remark}\label{Rem2.3}
Most of our results will be proved for environments which are not random but rather satisfy certain
properties (which will be listed in due time). We shall show that it is possible to apply these
results to certain important classes of random environments. More precisely, denote by
$(\Omega,\mathcal{F},\mathrm{P},T)$ the dynamical system where $\Omega$ is
the space of all sequences $\omega =((P_{n},Q_{n},R_{n}))_{n=-\infty}^\infty$ of
triples described above, $\mathcal{F}$ is the corresponding natural
$\sigma$-algebra, $\mathrm{P}$ denotes the probability measure on
$(\Omega,\mathcal{F})$, and $T$ is the shift operator on $\Omega$
defined by $T(P_{n},Q_{n},R_{n})=(P_{n+1},Q_{n+1},R_{n+1})$.
We shall always suppose that $T$ preserves measure $\mathrm{P}$ and is ergodic.
The expectation with respect to $\mathrm{P}$ will be denoted by $\mathrm{E}.$

To be able to apply the result obtained for deterministic environments
in the context of random environment, we shall check that the conditions we need
are satisfied by $\mathrm{P}$-almost all random environments.

\end{remark}

\begin{remark} Apart of the probability measures $\mathbb{P}$ and $\mathrm{P}$ defined above, we shall quite often
use measures which will be denoted by $\mathbf{P}$, with related expectations denoted $\mathbf{E}$, and which describe
'reference' probabilities and expectations related to, e. g., standard normal distribution, standard
Wiener processe, well known results concerning martingales, etc. Theorems \ref{ThCLTEx}, \ref{ThLLTEx},
Corollary \ref{CrExpVis}, Propositions \ref{Mart-BE}, \ref{PrConv} are examples
where this notation is used. In each such case, the precise meaning of $\mathbf{P}(\cdot)$ is obvious from the context.
\end{remark}

Denote by $\mathcal{J}$ the following set of triples of $m\times m$ matrices:
\[
\mathcal{J}\overset{\mathrm{def}}{=}\left\{  (P,Q,R)\,:\,P\ge0,\, Q\ge
0,\,R\ge0 \ \hbox{
and }\ (P+Q+R)\mathbf{1}=\mathbf{1}\right\}  .
\]

We shall use the following metric on $\Omega=\mathcal{J}^\mathbb{Z}.$
For
$\omega'=\{(P_n', Q_n', R_n')\},$ $\omega''=\{(P_n'', Q_n'', R_n'')\}$ set
\begin{equation}
\label{DefD}
\bd(\omega', \omega'')=\sum_{n\in\integers} \frac{\|P_n'-P_n''\|+\|Q_n'-Q_n''\|+\|R_n'-R_n''\|}{2^{|n|}}.
\end{equation}

Below, whenever we say that a function defined on $\Omega$ is continuous
we mean that it is continuous with respect to the topology induced by the metric $\bd(\cdot, \cdot)$.

\subsection{Matrices $\zeta_n$, $A_n$, $\alpha_n$ and some related quantities.}\label{ScMatrices}
We are now in a position to
recall the definitions of several objects most of which were first introduced and studied in \cite{BG1},
\cite{BG2} and which will play a crucial role in this work.

For a given $\omega\in\Omega$, define a sequence of $m\times m$ stochastic matrices $\zeta_n$ as follows.
For an integer $a$ let $\psi_a$ be a stochastic matrix. For $n>a$ define matrices $\psi_{n,a}$ recurrently as follows:
$\psi_{a,a}=\psi_a$ and
\begin{equation}
\label{psi}
\psi_{n,a}=(I-R_n-Q_n\psi_{n-1,a})^{-1}P_n, \quad n=a+1,\, a+2,\, \ldots.
\end{equation}
It is easy to show (see \cite{BG1}) that matrices $\psi_{n,a}$ are stochastic. Now for a fixed $n$ define
\begin{equation}
\label{zeta}
\zeta_n=\lim_{a\to -\infty}\psi_{n,a}.
\end{equation}
As shown in \cite[Theorem 1]{BG1} the limit \eqref{zeta} exists and is independent of the choice of
the sequence $\{\psi_a\}$.

Next, we define probability row-vectors $\sigma_n=\sigma_n(\omega)=(\sigma_n(\omega,1),\ldots,\sigma_n(\omega,m))$ which
are associated with the matrices $\zeta_n$. Let $\tilde{\sigma}_a$
be an arbitrary sequence of probability row-vectors (by which we mean that
$\tilde{\sigma}_a\ge 0$ and $\sum_{i=1}^m \tilde{\sigma}_a(i)=1$). Set
\begin{equation}\label{y}
\sigma_n\overset{\mathrm{def}}{=}\lim_{a\to-\infty}
\tilde{\sigma}_a\zeta_a\ldots\zeta_{n-1}.
\end{equation}
By the standard contraction property of the product of stochastic matrices, this limit exists
and does not depend on the choice of the sequence $\tilde{\sigma}_a$ (see \cite[Lemma 1]{G2}).
Vectors $\sigma_n$ could be equivalently defined as the unique sequence
of probability vectors satisfying the infinite system of equations
\begin{equation}\label{y1}
\sigma_n=\sigma_{n-1}\zeta_{n-1},\quad n\in\integers.
\end{equation}
Combining \eqref{y} with standard contracting properties of stochastic matrices
$\zeta$ we obtain for $k>n$ that
\begin{equation}\label{prodZeta}
\left\Vert\zeta_n \dots \zeta_{k-1}-(\sigma_k(1)\one,\dots,\sigma_k(m)\one )\right\Vert\le
 C\theta^{k-n},
\end{equation}
where $0\le\theta<1$ and $C$ depend only on the $\breps$ from \eqref{EqC2*}.

Define
\begin{equation}
\label{AAlpha}
\alpha_n=Q_{n+1}(I-R_n-Q_n \zeta_{n-1})^{-1}, \quad
A_n=(I-R_n-Q_n \zeta_{n-1})^{-1} Q_n.
\end{equation}
Note that $\alpha_nP_n=Q_{n+1}\zeta_n$ and hence
\begin{equation}
\label{AAlpha1}
\alpha_n=Q_{n+1}(I-R_n-\alpha_{n-1} P_{n-1})^{-1}.
\end{equation}
Conditions \eqref{EqC2*} imply (see \cite[Remark 2.2]{DG4}) that matrices $A_n$
have the following properties:
\begin{equation}\label{EqPositive}
||A_n||\le  (m\bar{\eps})^{-1}\text{ and } A_n(i,j)\ge\bar{\eps}
\end{equation}
In turn, inequalities \eqref{EqPositive} imply the well known contracting property of the action
of matrices $A_n$. We shall make use of the following version of this property: the limit
\begin{equation}\label{v_n}
v_n =\lim_{a\rightarrow-\infty}
\frac{A_{n}A_{n-1}\dots A_{a+1}\tilde{v}_{a}}{\left\|  A_{n}A_{n-1}\dots A_{a+1}\tilde{v}_{a}\right\|}
\end{equation}
exists and does not depend on the choice of the sequence of vectors $\tilde{v}_{a}\ge0,
||\tilde{v}_{a}||=1$. Moreover, there is a $\theta$, $0<\theta<1$, such that
\begin{equation}\label{v_n1}
\left\|v_n-
\frac{A_{n}A_{n-1}\dots A_{a+1}\tilde{v}_{a}}{\left\|  A_{n}A_{n-1}\dots A_{a+1}\tilde{v}_{a}\right\|}\right\|
=O(\theta^{n-a}).
\end{equation}
For the sake of completeness, we prove \eqref{v_n} and \eqref{v_n1} in Appendix \ref{AppPM}.

Similarly, for any sequence of row-vectors $\tilde{l}_a\ge0$, $\|\tilde{l}_a\|=1$, define
\begin{equation}\label{l_n}
l_n =\lim_{a\rightarrow\infty}\frac{
\tilde{l}_a\alpha_{a-1}\dots\alpha_{n}}{\left\|\tilde{l}_a\alpha_{a-1}\dots\alpha_{n}\right\|}.
\end{equation}
Set
\begin{equation}\label{EqLambda}
\lambda_k= \|A_k v_{k-1}\|\text{ and } \tlambda_k= \|l_{k+1}\alpha_{k}\|.
\end{equation}
Then obviously
\begin{equation} \label{EqPropl_n}l_{k+1}\alpha_{k}=\tlambda_k l_{k}, \quad
A_k v_{k-1}=\lambda_k v_k
\end{equation}
and for any $n\ge k$ we have
\begin{equation}\label{prodAandAlpha}
\left\|  A_{n}A_{n-1}\dots A_{k}v_{k-1}\right\|=\lambda_n\ldots\lambda_k, \quad
\left\|l_{n+1}\alpha_{n}\alpha_{n-1}\dots\alpha_{k}\right\|=\tlambda_n\ldots\tlambda_k.
\end{equation}

\begin{remark}
It should be
emphasized that even though \cite{BG1, BG2} dealt with stationary ergodic environments,
the proofs provided in \cite{BG1}, \cite{BG2} of the existence of the limits \eqref{zeta} and \eqref{v_n}
are in fact working for \textit{all} (and not just $\mathrm{P}$ - almost all) sequences $\omega$ satisfying \eqref{EqC2*}.
\end{remark}

\begin{remark}\label{Rem1D} Note that $m=1$ corresponds to the random walks on $\integers$
with jumps to the nearest neighbours. In this case $p_n=P_{\omega}(\xi(t+1)=n+1|\xi(t)=n)$ and $q_n=1-p_n$.
The above formulae now become very simple,
namely $\psi_n=\zeta_n=1$, $v_n=l_n=1$, $A_n=\lambda_n=\frac{q_n}{p_n}$, $\alpha_n=\tlambda_n=\frac{q_{n+1}}{p_n}$.
\end{remark}

In the above considerations, matrices $P_{n}$ and $Q_{n}$ play asymmetric roles
and it turns out to be useful to `symmetrize' the situation.
Namely, let us introduce stochastic matrices
$\zeta_{n}^{-}$ as the unique sequence of stochastic matrices satisfying the
system of equations which is symmetric to \eqref{psi}, \eqref{zeta}
\begin{equation}\label{EqZeta-}
\zeta_{n}^{-}=(I-R_{n}-P_{n}\zeta_{n+1}^{-})^{-1}Q_{n},\ -\infty
<n<+\infty.
\end{equation}
Next we set
\begin{equation}\label{DefA-}
A_{n}^{-}\overset{\mathrm{def}}{=}(I-R_{n}-P_{n}\zeta_{n+1}^{-})^{-1}
P_{n}, \quad \alpha_n^-=P_{n-1}(I-R_{n}-P_{n}\zeta_{n+1}^{-})^{-1}.
\end{equation}
All other related objects are introduced similarly.

Matrices $\zeta_n^-$, $\alpha_n^-$, $A_n^-$, etc have properties which are similar to those of
matrices $\zeta_n$, $\alpha_n$, $A_n$ etc listed above. All these objects will be used below without
further explanations.

\subsection{Walks in bounded potential.}
\label{SSBP}
In the context of random walks in random environments,
the notion of potential was introduced in \cite{S1} in the case of the walks on $\mathbb{Z}$
with jumps to nearest neighbors. The following extension of this definition to the case of random walks on a strip was
given in \cite{BG2}.

\begin{definition}
A \textit{potential} is a function of $n$ (and $\omega$) defined by
\begin{equation}\label{defPhi}%
\mathcal{U}_{n}(\omega)\equiv\mathcal{U}_{n}\overset{\mathrm{def}}{=}\left\{
\begin{array}
[c]{ll}%
\log||A_{n}...A_{1}|| & \mathrm{if\ }n\geq1\\
0 & \mathrm{if\ }n=0\\
-\log||A_{0}...A_{n+1}|| & \mathrm{if\ }n\leq-1
\end{array}
\right.
\end{equation}
We say that a potential is bounded if there is a constant $C_P$ such that
\begin{equation}
\label{BPot}
\big| \mathcal{U}_{n}\big|\le
C_P\text{ for all }n.
\end{equation}
\end{definition}

Bounded potentials appear naturally in the study of the following two classes of environments.
First, it has been proved in \cite{BG2} that the recurrence of a random walk in an i.i.d. environment on a strip
is equivalent to exactly one of two options: either the potential is bounded or it converges, after the diffusive
rescaling, to the Wiener process.
In the second case the walk exhibits the Sinai behavior (\cite{BG2}). Next, in \cite{DG4} it was shown that for quasiperiodic
environments with Diophantine frequencies the potential is bounded if and only if the random walk is recurrent.

\subsection{One useful property of a bounded potential.}
Properties \eqref{BPot} and \eqref{EqPositive} imply that there is a constant
$\tC_P>0$ such that for any vector $x\in\mathbb{R}^{m}$, $x\ge0$, ($x\not=0$), and for any $n> k$
\begin{equation}\label{BPNorm}
e^{-\tC_P}\|x\|\one \leq A_{n} \dots A_{k+1}x\leq e^{\tC_P}\|x\|\one.
\end{equation}
We shall check this statement for the case $k\ge1$ (other cases are similar).

Note that for any $k$ and $x\geq0$ the second inequality in \eqref{EqPositive} implies  $(A_kx)(i)\geq \breps\|x\|$
for all $i, 1\le i\le m$, and so $A_kx\geq \breps\|x\|\one$.

By \eqref{BPot}, $\|A_{k-1}...A_1\|=\|A_{k-1}...A_1\one\|\geq e^{-C_P}$ which is equivalent to
saying that there is $e_i$ such that $A_{k-1}...A_1\one\geq e^{-C_P}e_i.$ But then
\[
A_kA_{k-1}...A_1\one\geq e^{-C_P}A_ke_i\geq e^{-C_P} \breps\one.
\]
So $\one\le \breps^{-1}e^{C_P} A_k...A_1\one$ and hence
\[
A_n...A_{k+1}x\le \|x\|A_n...A_{k+1}\one\le \breps^{-1}e^{C_P}\|x\|
A_n...A_{k+1}A_k...A_1\one\leq\breps^{-1}e^{2C_P}\|x\|\one
\]
proving the second inequality in \eqref{BPNorm}.

Next, by the definition of the norm (and since matrices are positive) we have that
\[
A_k...A_1\one\leq e^{C_P}\one\ \text{ and hence }\ \one\geq e^{-C_P}A_k...A_1\one.
\]
Since $A_n...A_{k+1}x\geq \breps\|x\|A_n... A_{k+1}\one$, we obtain
\[
A_n...A_{k+1}x\geq \breps\|x\|A_n... A_{k+1}\one\ge \breps e^{-C_P}\|x\|A_n... A_{k+1}A_k...A_1\one
\ge \breps e^{-2C_P}\|x\|\one
\]
which proves the first inequality in \eqref{BPNorm}.

From now on we always suppose that the potential is bounded and {\em we assume for the rest
of the paper that \eqref{BPNorm} is satisfied.}

In our previous work we have shown that walks in a bounded
potential satisfy the following properties.

(I) There exists a non-constant sequence of column
vectors $\fm_n\in \mathbb{R}^m$ (with components $\fm_{n}(i)$) and a constant $K$ such that
\begin{equation}
\label{BndInc}
|\fm_{n'}(i')-\fm_{n''}(i'')|\leq K\text{ if }|n'-n''|\leq 1,
\end{equation}
and for all $n$
\begin{equation}
\label{MartEq}
 \fm_n=P_n \fm_{n+1}+R_n \fm_n+ Q_n \fm_{n-1}.
\end{equation}
The construction of the sequence $\fm_n$ is presented in \cite[Section 7]{DG4}.
We recall the probabilistic meaning of \eqref{MartEq}. Let $\fm:\mathbb{S}\mapsto\mathbb{R}$ be a function
on a strip and $\fm_n\in \mathbb{R}^m$ be a sequence of column vectors with components $\fm_n(i)=\fm(n,i)$.
If $\xi(t)=(X_t,Y_t),\ t\ge0,$ is the RW defined in Subsection \ref{SubScModel} then the process
$M(t)\de \fm(\xi_t)\equiv \fm(X_t,Y_t)\equiv\fm_{X_t}(Y_t)$ \text{ is a martingale }
if and only if the vectors $\fm_n$ satisfy \eqref{MartEq}.

(II) There exists a positive bounded solution $\rho_n=(\rho_n(1),...,\rho_n(m)),\ -\infty<n<\infty,$ to the equation
\begin{equation}
\label{RhoEq}
\rho_n=\rho_{n-1}P_{n-1} +\rho_nR_n +\rho_{n+1}Q_{n+1}
\end{equation}
which also satisfies $\rho_n=\rho_{n+1} \alpha_n, $ $\rho_{n+1}=\rho_n \alpha_{n+1}^-.$

Equation \eqref{RhoEq} appears in several contexts. First, for a fixed environment, it describes the
invariant measure for the walker. Second, in the case when we deal with stationary environment
the solution to \eqref{RhoEq} provides invariant densities for the environment viewed from the particle process.
We refer the reader to \cite{DG5} for a comprehensive analysis of this equation on the strip.
The invariant measure equation for the stationary walks on $\integers$ with bounded jumps
was studied in \cite{Br1}.

In accordance with conventions of \S \ref{notations}
we will often  write $\fm(\xi_t)$ instead of $\fm_{X_t}(Y_t)$ and $\rho(\xi_t)$ instead of
$\rho(X_t,Y_t)=\rho_{X_t}(Y_t).$


\section{Application of results to some classes of environments.}\label{ScApplicatons}
In this section we first discuss examples of important classes of environments. We then state the results
which we obtained for these environments as corollaries of our main and more general (but also more technical)
theorems proved in this paper.
\subsection{Classes of environments.}
\label{ScEx}
\begin{example}
\label{ExQP}
{\bf Quasiperiodic systems.} Consider the environment given by
$$(P, Q, R)_n=(\cP, \cQ, \cR)(\omega+n\gamma),$$
where $\omega, \gamma \in \Tor^d$, $\Tor^d$ is a $d$-dimensional torus, and $\cP, \cQ, \cR:\Tor^d\to\reals$ are $C^\infty$ functions.
$\gamma$ is called the rotation vector.
\end{example}

RWs in quasiperiodic environments received less attention than the walks discussed in the two other examples below,
the main references relevant for our work being \cite{Al, Br2, S2, KS1}. However, its continuous space
analogue, the quasiperiodic diffusion, is a classical object in the PDE literature, see \cite{JKO, KLO} and references therein.

For quasiperiodic environment there exists a continuous function $\lambda: \Tor^d\to \reals$ such that
$\lambda_n(\omega)=\lambda(\omega+n\gamma)$ ($\lambda_n$ is defined in \eqref{EqLambda}).
We say that
$\gamma$ is {\em Diophantine} if there are constants $K, \tau$ such that for each $k\in\integers^d\setminus 0$
we have
\begin{equation}
\label{DiophTau}
d(\gamma k , 2\pi \integers)\geq \frac{K}{|k|^\tau},
\end{equation}
where $d$ denotes the distance on the line.
If $\gamma$ is Diophantine then $\lambda\in C^\infty(\Tor^d),$ see Appendix \ref{AppReg}. The recurrence condition
\cite{BG1} amounts to
\begin{equation}
\label{QPRec}
\int_{\Tor^d} \ln \lambda(\omega) d\omega=0,
\end{equation}
where $d\omega$ is normalized Lebesgue measure on the torus.
It is proven in \cite{DG4} that if $\gamma$ is Diophantine then for every triple
$(\cP, \cQ, \cR)$ the CLT holds for almost all $\omega.$
We note that the Diophantine assumption \eqref{DiophTau} is necessary, since
\cite{DFS} gives examples showing that the CLT need not hold if \eqref{DiophTau} fails.
In this paper we obtain additional information in the case when \eqref{DiophTau} and \eqref{QPRec} hold.

\begin{example}
\label{ExInd}
{\bf Independent environments.} Here we suppose that $(P, Q, R)_n$ for different $n$
are independent and identically distributed.
\end{example}

The study of RWRE on $\mathbb{Z}$ goes back to \cite{KKS, S1, So}.
We refer the reader to \cite{Z} for a good overview of this subject. The papers most relevant to the present work are also described below after the formulations of Theorems \ref{ThCLTEx}, \ref{ThLLTEx}, \ref{ThEnvMixEx}. The study of the walks on the strip was
initiated in \cite{BG1}, the main references for limit theorems in this setting are
\cite{BG2, G2, DG2, DG4}.

In particular, for independent environments it was shown in \cite{BG2} that in the recurrent case the Sinai behavior is observed unless
$(P, Q, R)_n$ belong to a proper algebraic subvariety  in the space of transition matrices. The behavior of the walker
on this subvariety was investigated in \cite{DG4} where it was proven that the solutions to
\eqref{MartEq} and \eqref{RhoEq} with properties (I) and (II) exist.

\begin{example}
\label{ExPert}
{\bf Small perturbations of the simple random walk on $\integers$.}
 Consider a random walk on $\integers$ with $p_n=\frac{1}{2}-a_n,$ $q_n=\frac{1}{2}+a_n$ where
$a_n$ satisfy
$$ |a_n|\leq \frac{K}{|n|^\kappa+1}, \text{ where }\kappa>1. $$
\end{example}
The condition $\kappa>1$ is sufficient for recurrence (see e.g.  \cite{Du}).
However, for our results to apply we need one more condition, namely
\begin{equation}
\label{NNUnbiased}
\bupsilon=1 \quad \text{where}\quad \bupsilon=\prod_{n\in\integers} \left(\frac{p_n}{q_n}\right).
\end{equation}
Condition \eqref{NNUnbiased} appears to be restrictive. However, we will show in Corollary \ref{CrSkew}
that it is in fact necessary for the CLT to hold.

The study of environments where the limit $\displaystyle \lim_{n\to\pm \infty}p_n$ exists has a long history.
The limit theorems for
such walks go back to \cite{Lam, St}. The setting which perhaps is the closest to ours can be found in \cite{MW3} where
the Central Limit Theorem is obtained in the {\em transient} case.
Small perturbations of RWRE were studied
in \cite{GPS, MW1, MW2}.
We refer the reader to \cite{MPW} and references therein for a review of more recent developments.
In the present paper we show that such walks fit into the more general framework that we consider.

\subsection{The results.}
Next, we describe applications of the general theory developed in this paper to the
classes of environments described above. We assume throughout this section that the ellipticity
condition \eqref{EqC2*} holds and that the
walk is recurrent. In addition, we assume \eqref{BPNorm} (this assumption is only non-trivial
in Example \ref{ExInd} while in Examples \ref{ExQP} and \ref{ExPert} it follows from
recurrence and ellipticity).

We would like to emphasize that our results by no means are limited to Examples
\ref{ExQP}, \ref{ExInd}, and \ref{ExPert}. In fact, Theorems \ref{ThFCLT},
\ref{ThSkewEx}, \ref{ThCLTEx}, \ref{ThLLTEx}, and \ref{ThEnvMixEx} below will be obtained as corollaries
of more general results, namely,
Theorems \ref{ThCLTWalk}, \ref{ThSkewGen}, \ref{ThCLTRate}, \ref{ThEnvMix}, and \ref{ThLLT} respectively.
The statements of these general theorems are more technical and will be given in a due course, after we
introduce the necessary background.

Theorems \ref{ThFCLT}, \ref{ThCLTEx}, \ref{ThSkewEx}, \ref{ThLLTEx}, and \ref{ThEnvMixEx}
below are valid for all environments in Examples \ref{ExQP} and \ref{ExPert}
and for almost all environments in Example \ref{ExInd}. However in that last case we provide explicit conditions on
environment (see equations \eqref{RCMart}, \eqref{RC-Occ-V}, \eqref{RC-Occ-Q})
which guarantee the validity of these theorems.

Let $\cN$ be the standard normal random variable and $\Phi$
be the cumulative distribution function of $\cN.$

\begin{theorem}
\label{ThFCLT}
{\bf (Functional CLT)}
There is a constant $D>0$ such that the process
$W_N(t)=\frac{X(tN)}{\sqrt{N}}$ converges in law as $N\to\infty$ to $\cW(t)$-the Brownian Motion
with zero mean and variance $Dt.$
\end{theorem}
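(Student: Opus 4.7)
The plan is to prove Theorem~\ref{ThFCLT} by the classical martingale method applied to the harmonic sequence $\fm_n$ from property~(I); the theorem is then deduced from the more general Theorem~\ref{ThCLTWalk} by verifying its hypotheses for each of the three classes in Section~\ref{ScEx}.

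Setting $M_t:=\fm(\xi_t)$, equation \eqref{MartEq} makes $M_t$ a $\mathbb{P}$-martingale, and \eqref{BndInc} gives increments bounded by $2K$. Its predictable quadratic variation is
\[
\langle M\rangle_t \;=\; \sum_{s=0}^{t-1} V(\xi_s),\qquad V(n,i) := \sum_{z'}\mathfrak{P}((n,i),z')\bigl(\fm(z')-\fm(n,i)\bigr)^{2},
\]
a bounded local observable of the environment. The core analytic step is the law of large numbers $\langle M\rangle_t/t\to D'$ in $\mathbb{P}$-probability for a deterministic $D'>0$. This is exactly the self-averaging statement that Section~\ref{ScEnvLLN} is designed to deliver, using the finite-box Green-function asymptotics of Section~\ref{ScGreen} together with the invariant density $\rho$ from property~(II); recurrence combined with \eqref{BPNorm} excludes $D'=0$. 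Given this, the martingale functional CLT for arrays with uniformly bounded jumps yields $M(tN)/\sqrt{N}\Rightarrow\sqrt{D'}\,\cW(t)$.

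To conclude I would produce a non-zero constant $c$ and show $\sup_{t\le T}|M_t-cX_t|=o(\sqrt{T})$ in probability, so that the invariance principle transfers to $X_t$. The picture is transparent in the one-dimensional setting of Remark~\ref{Rem1D}: there $\fm_{n+1}-\fm_n=(\fm_1-\fm_0)\prod_{k=1}^{n}(q_k/p_k)$, so the linear growth rate of $\fm_n$ is literally the ergodic average of the product. On the strip the analogous identity uses the matrices $A_n$ and the Perron-like vectors $v_n$ of Section~\ref{ScMatrices}, and the required averaging is again a self-averaging statement of the type proved in Section~\ref{ScEnvLLN}. Combining the two inputs gives $X(tN)/\sqrt{N}\Rightarrow\cW(t)$ with $D=D'/c^{2}>0$.

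The heart of the argument, and thus the main obstacle, is the LLN for $\langle M\rangle_t$. In the stationary setting this would be routine Birkhoff; the novelty here is that the ergodic statement must be obtained constructively from Green-function estimates in order to apply to a fixed, non-stationary environment. For the three classes of Section~\ref{ScEx} this reduces to concrete verifications: Weyl-type equidistribution of $\{n\gamma\}$ on $\mathbb{T}^{d}$ under the Diophantine condition \eqref{DiophTau} in Example~\ref{ExQP}; almost-sure validity of the explicit occupation-time and martingale conditions \eqref{RCMart}, \eqref{RC-Occ-V}, \eqref{RC-Occ-Q} in Example~\ref{ExInd}; and summability of $|a_n|$ combined with the no-drift constraint \eqref{NNUnbiased} in Example~\ref{ExPert}.
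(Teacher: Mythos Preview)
Your proposal is correct and follows essentially the same route as the paper: reduce to Theorem~\ref{ThCLTWalk}, prove that via Brown's martingale functional CLT applied to $M_t=\fm(\xi_t)$, obtain the LLN for the quadratic variation from Lemma~\ref{LmEnvLLN} (the Green-function based argument of Section~\ref{ScEnvLLN}), and then verify the hypotheses \eqref{AvOcc}, \eqref{AvQV} case by case for Examples~\ref{ExQP}--\ref{ExPert} as in Section~\ref{SSCLTEx}.

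One small clarification on the transfer step: the passage from $\fm(\xi_{Nt})/\sqrt{N}$ to $X_{Nt}/\sqrt{N}$ in the paper is not another instance of the along-the-walk LLN of Section~\ref{ScEnvLLN}; it is purely the \emph{spatial} asymptotic linearity $\fm_n(y)/n\to 1$ of \eqref{XMNorm2}, which is taken as a standing normalization and checked directly for each example (via \eqref{RCMart} in Section~\ref{SSCLTEx}). Once this is in hand, tightness of $\max_{t\le 1}|\fm(\xi_{Nt})|/\sqrt{N}$ from Doob together with \eqref{LocGrowth} gives $\sup_{t\le 1}|M_{Nt}-X_{Nt}|/\sqrt{N}\to 0$ in probability, which is exactly your stated transfer with $c=1$.
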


In fact, we can obtain the functional CLT also for perturbations of our environments which decay at infinity
sufficiently fast. Namely, consider a perturbation $\bar\fP$ of $\fP$\footnote{In the setting of Example \ref{ExPert}
this means that we allow the environments which do not satisfy \eqref{NNUnbiased}.
In fact, it follows from the explicit expression for $\fp$ in terms of $\bupsilon$ (see equation \eqref{UpsilonGen})
that in
Example \ref{ExPert} $\fp=\frac{1}{2}$ iff $\bupsilon=1.$}
such that
$$ \left|\bar\fP(z, z')-\fP(z, z')\right|\leq \frac{C}{|n|^\kappa+1}
\text{ where } z=(n,j) \text{ and }\kappa>1.$$
Let $\brxi(t)=(\brX(t), \brY(t))$ denote the walk in the perturbed environment.

\begin{theorem}
\label{ThSkewEx}
{\bf (Functional CLT for the perturbed walk)}
There exist constants $\fp$ and $D>0$ such that the process
$\frac{\brX(tN)}{\sqrt{N}}$ converges in law as $N\to\infty$ to the skew Brownian Motion
with zero mean, variance $Dt,$ and skewness parameter $\fp.$
\end{theorem}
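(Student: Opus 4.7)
The plan is to derive Theorem \ref{ThSkewEx} from the general result Theorem \ref{ThSkewGen} by constructing, for the perturbed walk, a martingale function whose asymptotic slopes at $+\infty$ and $-\infty$ encode the skewness, and then running a martingale invariance principle. The skew structure arises naturally: since $\kappa>1$ the perturbation is summable, so both asymptotic halves of the environment agree with the unperturbed one, but the cumulative effect of the perturbation near $0$ creates an asymmetry between the two halves that persists on the diffusive scale.

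\textbf{Step 1: constructing $\bar{\fm}.$} I would look for $\bar{\fm}_n$ satisfying the martingale equation \eqref{MartEq} with the perturbed matrices $(\bar P_n,\bar Q_n,\bar R_n).$ Writing $\bar{\fm}_n=\fm_n+g_n$ and using that both triples are stochastic, the source for $g_n$ equals
\[
(P_n-\bar P_n)(\fm_{n+1}-\fm_n)+(Q_n-\bar Q_n)(\fm_{n-1}-\fm_n),
\]
whose magnitude is $O(|n|^{-\kappa})$ by \eqref{BndInc} and the decay hypothesis; since $\kappa>1,$ this source is absolutely summable. The bounded-growth solutions of the homogeneous equation can be parameterized via $v_n$ and $l_n$ of Section \ref{ScMatrices} together with \eqref{BPNorm}, which yields a particular $g_n$ with finite limits at $\pm\infty.$ Consequently $\bar{\fm}_n$ has the same linear asymptotic growth as $\fm_n$ on each side, but generally with \emph{different} multiplicative constants
\[
a_\pm\overset{\mathrm{def}}{=}\lim_{n\to\pm\infty}\frac{\bar{\fm}_n(i)}{\fm_n(i)}\in(0,\infty),
\]
while the bounded-increment property \eqref{BndInc} is preserved for $\bar{\fm}.$

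\textbf{Step 2: martingale CLT and skew identification.} The process $\bar{\fm}(\brxi_t)$ is then a martingale with bounded increments, and its quadratic variation is controlled by the environment LLN of Section \ref{ScEnvLLN} applied to the perturbed walk; because the perturbation is summable, the relevant mesoscopic averages are insensitive to it and the LLN is inherited from the unperturbed case. The martingale CLT argument of Section \ref{ScCLT} then gives $\bar{\fm}(\brxi_{tN})/\sqrt{N}\to\cW(t),$ a Brownian motion with some variance $\sigma^2.$ Away from $t=0,$ the walker $\brX(tN)$ is of order $\sqrt{N},$ so it lives where $\bar{\fm}$ is already in its asymptotic regime, and hence
\[
\frac{\brX(tN)}{\sqrt{N}}\;\approx\;\frac{\bar{\fm}(\brxi_{tN})}{\fc\sqrt{N}}\left(\frac{1}{a_+}\mathbf{1}_{\{\brX(tN)>0\}}+\frac{1}{a_-}\mathbf{1}_{\{\brX(tN)<0\}}\right),
\]
where $\fc$ denotes the asymptotic slope of $\fm.$ The right-hand side is a sign-dependent dilation of $\cW,$ which is precisely skew Brownian motion; matching coefficients yields
\[
\fp=\frac{1/a_+}{1/a_++1/a_-},
\]
with $D$ read off from the scaling. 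In Example \ref{ExPert} a direct computation gives $\fp=(1+\bupsilon)^{-1},$ and in particular $\fp=\tfrac12$ iff $\bupsilon=1,$ consistent with the footnote.

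\textbf{Main obstacle.} The delicate point is Step 1: producing $\bar{\fm}$ with controlled and \emph{in general unequal} slopes $a_\pm$ under only a summable perturbation, and arguing that this asymmetry cannot be removed by a single global renormalization of $\fm.$ This is where the Green-function analysis of Section \ref{ScGreen} and the matrix representations of Section \ref{ScMatrices} are essential, since the two-dimensional space of homogeneous solutions must be used to place the unique summable correction and to extract $a_+\ne a_-.$ Once Step 1 is in hand, Step 2 is an adaptation of the unperturbed martingale CLT together with the classical characterization of skew BM as a sign-dependent rescaling of standard BM.
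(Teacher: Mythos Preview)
Your Step 2 contains a genuine gap. You invoke the martingale CLT of Section~\ref{ScCLT} to conclude that $\bar\fm(\brxi_{tN})/\sqrt N$ converges to a Brownian motion with a \emph{single} variance $\sigma^2$. But Theorem~\ref{ThCLTWalk} rests on Lemma~\ref{LmEnvLLN}, which in turn requires the two-sided limit \eqref{AvQV} to exist with the \emph{same} value at $+\infty$ and $-\infty$. For the perturbed walk this fails: by Lemma~\ref{LmPertMatr}(c) one has $b_\pm=b\beta_\pm$ and $a_\pm=a/\beta_\pm$, so $D_\pm=(b/a)\beta_\pm^2$, and these coincide only when $\beta_+=\beta_-$, i.e.\ precisely when $\fp=\tfrac12$. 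Your remark that ``the relevant mesoscopic averages are insensitive'' to a summable perturbation is therefore false: the local perturbation of $(P_n,Q_n,R_n)$ induces a \emph{global} multiplicative change in $\rho_n$ and in $\fq_n$ via the factors $\beta_\pm^{\pm1}$. Relatedly, a pure sign-dependent spatial dilation of a Brownian motion is \emph{not} a skew Brownian motion; it is a diffusion with different variances on the two half-lines. The paper's definition of $\cB_{\fp,D}$ involves both a dilation $\cS_{\theta,1}$ \emph{and} a time change $u_\gamma$ with $\gamma=\theta^2$; neither ingredient alone suffices.

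The paper resolves this by passing to a lazy walk $\txi$ that slows the original walk down by a factor $\gamma=D_-/D_+$ on the negative half-strip. This restores a single quadratic-variation rate, so the ordinary martingale CLT applies to $\fm(\txi(Nu))/\sqrt N$ and yields a genuine Brownian motion $\cW_{D_+}$; undoing the laziness produces the time change $u_\gamma$, and finally the spatial map $\cS_{\theta,1}$ accounts for the two slopes $\mu_\pm$ of $\bar\fm$. In the perturbed setting the identity $\gamma=\theta^2$ falls out of Lemma~\ref{LmPertMatr}(c), which is why the limit is skew Brownian motion rather than the more general $\cW_{\gamma,\theta,D}$.

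A smaller point: in Step~1 you write that $g_n=\bar\fm_n-\fm_n$ has finite limits at $\pm\infty$ and then that $a_\pm=\lim \bar\fm_n/\fm_n$ may differ. These two claims are incompatible: if $g_n$ is bounded then $a_\pm=1$. What actually happens (already visible in the one-dimensional computation, cf.\ \eqref{Delta+}--\eqref{Delta-}) is that $\bar\Delta_n/\Delta_n$ has finite but generally distinct limits $\beta_\pm$, so $g_n$ grows linearly with different slopes on the two sides.
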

The definition and basic properties of the skew Brownian Motion will be discussed in Section \ref{ScSkew}.
Here we just mention that one way to construct the skew Brownian Motion with skewness parameter $\fp$ is to take the scaling limit
for the random walk which is symmetric everywhere except the origin, and which moves to the right from
the origin with probability $\fp$ and to the left with probability $1-\fp$ (see \cite{HS}).

\begin{theorem}
\label{ThCLTEx}
{\bf (Effective CLT)} There are constants $D, \upsilon$ such that for each $\eps$ there is a constant $C_\eps$ such that
$$\sup_x \left|\mathbb{P}\left(\frac{X(N)}{\sqrt{DN}} \leq x\right)-\Phi(x)\right|\leq C_\eps N^{-(\upsilon-\eps)} . $$
\end{theorem}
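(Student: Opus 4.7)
The plan is to reduce the rate of convergence for $X(N)$ to a martingale Berry--Esseen estimate applied to the martingale $M(t)=\fm(\xi_t)$, where $\fm_n$ is the sequence provided by property (I) of \S\ref{SSBP}. Since $\fm$ satisfies \eqref{MartEq}, $M(t)$ is a martingale with respect to the natural filtration $(\cF_t)$ of $\xi_t$, and by \eqref{BndInc} its increments $\Delta_t:=M(t+1)-M(t)$ are uniformly bounded. Moreover, in a bounded potential the construction from \cite{DG4} yields $\fm_n$ with $\fm_n/n\to c_\fm\neq 0$ and a uniform bound $|\fm_n-c_\fm n|\leq C$, so that
\begin{equation*}
X(N)\;=\;\frac{M(N)}{c_\fm}\;+\;\cO(1),
\end{equation*}
and it suffices to prove a polynomial-rate Berry--Esseen bound for $M(N)/\sqrt{N}$.

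The martingale Berry--Esseen bound (Proposition \ref{Mart-BE}) requires two inputs: uniform control of the martingale increments, and concentration of the quadratic variation $\langle M\rangle_N=\sum_{t<N}\EXP[\Delta_t^2\mid \cF_t]$. The first is immediate from \eqref{BndInc}. The \textbf{main obstacle} is the second: one must produce $D>0$ and $\delta>0$ with
\begin{equation*}
\mathbb{P}\Bigl(\bigl|N^{-1}\langle M\rangle_N-D\bigr|>N^{-\delta}\Bigr)\;\leq\; C\, N^{-\delta}.
\end{equation*}

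Observe that $\EXP[\Delta_t^2\mid \cF_t]=g(\xi_t)$ for an explicit local functional $g$ of the environment built from $P_n,Q_n,R_n$ and the triple $(\fm_{n-1},\fm_n,\fm_{n+1})$. Hence $\langle M\rangle_N=\sum_{t<N}g(\xi_t)$ is a Birkhoff-type sum along the walk, and the required concentration is a \emph{quantitative} weak law of large numbers for $g$ applied to the environment-viewed-from-the-particle process. This is precisely what the earlier sections of the paper are set up to deliver: the Green function of the walk killed at the boundary of a long interval $[-L,L]$ admits the asymptotic expansion of Section \ref{ScGreen}, with leading term built out of the invariant density $\rho_n$ from \eqref{RhoEq} together with the contracting vectors $v_n$, $l_n$ from \eqref{v_n}, \eqref{l_n}. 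Substituting this expansion into $\EXP\bigl[\sum_{t<N}g(\xi_t)\bigr]$ identifies $D$ as the spatial average of $g$ against $\rho$, while a parallel second-moment computation yields the polynomial error; this is essentially the content of the environment LLN proved in Section \ref{ScEnvLLN}, and it is also the step at which the explicit conditions \eqref{RCMart}, \eqref{RC-Occ-V}, \eqref{RC-Occ-Q} (needed in Example \ref{ExInd}) enter.

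Once both ingredients are in place, the martingale Berry--Esseen theorem gives
\begin{equation*}
\sup_x\Bigl|\mathbb{P}\bigl(M(N)/\sqrt{DN}\leq x\bigr)-\Phi(x)\Bigr|\;\leq\; C_\eps\, N^{-(\upsilon-\eps)},
\end{equation*}
with $\upsilon$ determined by the exponent $\delta$ and by the (bounded) moments of the $\Delta_t$. Combining with $X(N)=M(N)/c_\fm+\cO(1)$ and rescaling $D\mapsto D/c_\fm^{2}$ yields the stated bound. Thus the argument is a direct martingale CLT once the quantitative ergodicity of the environment process is in hand; the entire technical effort is concentrated in the Green function analysis of Section \ref{ScGreen} and the self-averaging estimate for $g$ that it produces.
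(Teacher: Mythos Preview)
Your outline has the right high-level picture (reduce to the martingale $\fm(\xi_t)$, control its quadratic variation, invoke a martingale Berry--Esseen bound), but the direct application of Proposition~\ref{Mart-BE} to the unblocked martingale does not go through, and this is precisely where the paper's argument differs from yours.

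The hypothesis \eqref{OscQV} of Proposition~\ref{Mart-BE} demands that the predictable quadratic variation $\cQ_N=\sum_{t<N}\fq(\xi_t)$ lie within $C\sqrt{N}\ln^2 N$ of $DN$ with probability $1-\cO(N^{-1/4}\ln N)$. You only claim concentration within $N^{1-\delta}$ for \emph{some} $\delta>0$, and in fact nothing stronger is available: for a recurrent walk, $\sum_{t<N}\fq(\xi_t)-DN$ is a local-time weighted spatial sum whose fluctuations are genuinely larger than $\sqrt{N}$ (the paper's own quantitative estimate \eqref{ControlQV} only places $\cQ$ within $\cO(N^{1-\beta_2/4})$ of its mean). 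So the hypothesis of Proposition~\ref{Mart-BE} fails for the step-by-step martingale, and the LLN of Section~\ref{ScEnvLLN} cannot be sharpened to a $\sqrt{N}$ window.

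The paper circumvents this by two devices you omit. First, it blocks at the mesoscopic scale $L_N=N^{1/4}$ and works with the $\cO(\sqrt{N})$ increments $Z_j=(\fm(\xi_{\tau_j})-\fm(\xi_{\tau_{j-1}}))/L_N$, each of which is $\pm 1+\cO(N^{-\beta_1})$ by \eqref{RCMart}. Second, and crucially, it does not stop at the deterministic time $N$ but at the \emph{random} time $\tau^*=\tau_{j^*}$, the first blocked time at which $\cQ$ exceeds $DN$; by construction $|\cQ^*-DN|\leq L_N^2\ln^2 L_N=\cO(\sqrt{N}\ln^2 N)$, so \eqref{OscQV} holds trivially for the stopped, blocked martingale, and Proposition~\ref{Mart-BE} (applied with $\sqrt{N}$ in place of $N$) yields the rate $N^{-1/8}$ for $\fm^*=\fm(\xi_{\tau^*})$. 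The gap $|\tau^*-N|$, which is only $\cO(N^{1-\beta_2/4})$, is then absorbed via Proposition~\ref{PrConv}(a): the conditional second moment of $(\fm(\xi_N)-\fm^*)/\sqrt{N}$ is $\cO(N^{-\beta_2/4})$, contributing $\cO(N^{-\beta_2/8})$ to the final rate.

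A secondary issue: your claim that $|\fm_n-c_\fm n|\leq C$ uniformly is false outside Example~\ref{ExQP}. In general \eqref{RCMart} only gives $|\fm_n-n|=\cO(|n|^{1-\beta_1})$, so $X(N)=M(N)+\cO(1)$ fails for Examples~\ref{ExInd} and~\ref{ExPert}; the paper handles the passage from $\fm(\xi_N)$ to $X_N$ by introducing $\tX_N$, using \eqref{RCMart} on the event $|\fm(\xi_N)|\leq N^{(1+\heps)/2}$, and controlling the complement via the Azuma inequality.
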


\noindent
The exponent $\upsilon$ is explicit. Namely,
$ \upsilon=\frac{1}{8}$ in Examples \ref{ExQP}, \ref{ExInd}, and
$\upsilon=\min\left(\frac{\kappa-1}{2}, \frac{1}{8}\right)$
in Example \ref{ExPert}.


For the next two theorems we assume for Examples \ref{ExQP} and \ref{ExInd} that the random walk is lazy
in the sense that
\begin{equation}
\label{CanStay}
R_n(i,i)\geq\breps>0.
\end{equation}
\begin{remark}
Assumption \eqref{CanStay} is made for convenience only in order to simplify the statements.
Indeed assumption \eqref{EqC2*} implies that the walker can reach all points at the neighboring
layer by the time it changes layers. Therefore if we define the stopping times $\tau(n)$ by the conditions
$\tau(0)=0, \quad \tau(n+1)=\min(\tau>\tau(n): X_\tau\neq X_{\tau(n)})$ then
the accelerated walk $\xi^*(n)=\xi(\tau(2n))$ satisfies  \eqref{CanStay}. However the natural objects associated
with $\xi^*$ (such as solutions to \eqref{RhoEq} etc) have a more complicated form than for $\xi$ so
we prefer to impose \eqref{CanStay}.
\end{remark}

\begin{theorem}
\label{ThLLTEx}
{\bf (Local Limit Theorem)}
(a) In Examples \ref{ExQP} and \ref{ExInd} there are constants $a, b$ such that uniformly for  $k_N/\sqrt{N}$ in a compact set
for each $y\in \{1,\dots, m\}$ we have
\begin{equation}\label{LLT-Rho}
\lim_{N\to\infty}  \frac{\mathbb{P}\left(\xi(N)=(k_N, y)\right)}{\bP\left(\sqrt{\frac{bN}{a}}\cN\in
\left[k_N-\frac{1}{2}, \, k_N+\frac{1}{2}\right]\right)\rho(k_N,y)}=\frac{1}{a}.
\end{equation}

(b) In Example \ref{ExPert}  uniformly for
$k_N/\sqrt{N}$ in a compact set if $k_N$ and $N$ have the same parity
then
\begin{equation}
\label{LLT-RhoPert}
\lim_{N\to\infty}  \frac{\mathbb{P}\left(\xi(N)=k_N\right)}{\mathbf{P}\left(\sqrt{N}\cN\in
\left[k_N-\frac{1}{2}, \, k_N+\frac{1}{2}\right]\right)\rho(k_N)}=2.
\end{equation}
\end{theorem}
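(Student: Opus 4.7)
The plan is to derive the local limit theorem by combining two ingredients already developed in the paper: the semi-local CLT from Section \ref{ScSemiLoc}, which controls the distribution of $X(N)$ on a scale $L_N$ that is much larger than $\cO(1)$ but much smaller than $\sqrt{N}$, and the mixing of the environment viewed by the particle from Section \ref{ScEnvMix}, which tells us how the probability mass distributes itself inside such a mesoscopic window. The role of $\rho$ as the invariant density of the walker (property (II) in Section \ref{SSBP}) is the reason that it appears in the final formula.

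More concretely, fix a mesoscopic scale $L_N$ with $L_N\to\infty$ and $L_N/\sqrt{N}\to 0$ (the precise choice will be dictated by the error terms in Sections \ref{ScSemiLoc} and \ref{ScEnvMix}) and consider the window $W_N=[k_N-L_N, k_N+L_N]\times\{1,\dots,m\}$. The semi-local CLT should give
\[
\mathbb{P}\bigl(X(N)\in[k_N-L_N,k_N+L_N]\bigr)=\bigl(1+o(1)\bigr)\,\bP\bigl(\sqrt{DN}\,\cN\in[k_N-L_N,k_N+L_N]\bigr),
\]
and since $L_N=o(\sqrt{N})$ the right-hand side is $(1+o(1))\cdot 2L_N\cdot g_{DN}(k_N)$, where $g_{DN}$ is the centered Gaussian density with variance $DN$. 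Separately, the mixing result from Section \ref{ScEnvMix} should imply that, on the set $\{\xi(N)\in W_N\}$, the conditional distribution of $\xi(N)$ is close in total variation to the invariant measure $\rho$ restricted to $W_N$ and normalized. Hence
\[
\mathbb{P}\bigl(\xi(N)=(k_N,y)\bigr)=\bigl(1+o(1)\bigr)\,\frac{\rho(k_N,y)}{\sum_{(j,y')\in W_N}\rho(j,y')}\,\mathbb{P}\bigl(X(N)\in[k_N-L_N,k_N+L_N]\bigr).
\]

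The final step is to evaluate the normalization $\sum_{(j,y')\in W_N}\rho(j,y')$. Since $L_N\to\infty$, an ergodic-type statement for $\rho$ (in the random and quasiperiodic cases this is the law of large numbers for the environment viewed by the particle proved in Section \ref{ScEnvLLN}, applied to the observable $\sum_{y'}\rho(\cdot,y')$; in Example \ref{ExPert} one uses the explicit product formula for $\rho$ together with the summability of $a_n$) gives
\[
\sum_{(j,y')\in W_N}\rho(j,y')=\bigl(1+o(1)\bigr)\,2L_N\,a,
\]
where $a$ is the appropriate mean of $\sum_{y'}\rho(\cdot,y')$. Combining with the Gaussian asymptotics and setting $b$ so that $D=b/a$, the $L_N$'s cancel and the ratio of windows converges to $1/a$, which yields \eqref{LLT-Rho}. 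For part (b), the simple random walk in Example \ref{ExPert} has $R_n=0$, so the walk is strictly periodic of period $2$; at time $N$ it only visits sites $k$ with $k\equiv N\pmod 2$. Hence in the window $W_N$ only about $L_N$ (rather than $2L_N$) sites receive any mass, the normalization in the ratio above becomes $(1+o(1))\,L_N\,a$ with $a=1$ (since in one dimension $\rho$ is naturally normalized to have mean one under \eqref{NNUnbiased}), and the extra factor of $2$ in \eqref{LLT-RhoPert} arises.

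The main obstacle is the second step: to obtain the LLT one needs the mixing of the environment seen by the particle not only on macroscopic scales (where the CLT of Theorem \ref{ThCLTEx} suffices) but inside a mesoscopic window of size $L_N$. This will be a quantitative strengthening of the environment-mixing result of Section \ref{ScEnvMix}, based on a local limit theorem for hitting times, and its rate must beat the Gaussian density $g_{DN}(k_N)=\Theta(N^{-1/2})$ so that the resulting error is genuinely $o(1)$ after cancellation. Matching this rate against the semi-local CLT error determines the admissible range of $L_N$ and is where most of the technical work goes.
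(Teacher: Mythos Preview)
Your high-level picture is right in spirit---the semilocal CLT from Section \ref{ScSemiLoc} and the hitting-time decomposition from Section \ref{ScEnvMix} are indeed the two ingredients---but the second step of your argument has a genuine gap. You want to show that, conditional on $\xi(N)\in W_N$, the distribution of $\xi(N)$ is close to the normalized restriction of $\rho$ to $W_N$. This is not a consequence of Theorem \ref{ThEnvMix}: that theorem applies only to self-averaging observables $h$, and the indicator of a single site (or even of the mesoscopic window $W_N$, which depends on $N$) is not of this type. More fundamentally, the statement ``the conditional distribution on $W_N$ is $\rho$-normalized'' is equivalent, after multiplying by $\mathbb{P}(X(N)\in W_N)$, to the LLT itself; so invoking a ``quantitative strengthening of mixing'' here is circular. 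The LLT for hitting times \eqref{KeySpaceTme} that you mention does not by itself produce a spatial conditional law---it converts a conditional probability given $A_{W,\delta,j}$ into a \emph{time} average, which is a different object.

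The paper's proof of Theorem \ref{ThLLT} avoids this circularity by never passing through a spatial conditioning. It starts from the path decomposition \eqref{Deco}, uses \eqref{KeySpaceTme} to turn $\mathbb{P}(\xi(N)=(k_N,y_N)\mid A_{W,\delta,j})$ into the time average
\[
\frac{1}{\delta\sqrt{N}}\sum_{l=1}^{\delta\sqrt{N}}\mathbb{P}\bigl(\xi(b_{N,j,l})=(k_N,y_N)\,\big|\,\xi(0)=e(W)\bigr),
\]
and then recognizes this sum as (a rescaled) expected number of visits to $(k_N,y_N)$. The factor $\rho(k_N,y_N)$ enters here, via Corollary \ref{CrExpVis} (convergence of the expected local time), which in turn rests on the Green function asymptotics of Lemma \ref{LmGF}---not via any mixing statement. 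The semilocal CLT (Corollary \ref{CrBoot}) is then used, in equation \eqref{SegSum}, to sum the resulting expression over small spatial segments $I_p$; it plays the role of controlling the distribution of the endpoint $e(W)$ and of the intermediate position $\xi(b_{N,j,0})$, rather than the role you assign it. So the missing idea in your outline is precisely Corollary \ref{CrExpVis}: the $\rho$ in the LLT comes from expected local time, and the argument is a direct computation of the point probability rather than a two-step ``total mass times conditional density'' decomposition.
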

\begin{remark}
Equation \eqref{RhoEq} defines $\rho$ up to a multiplicative constant. So to complete the statement
of Theorem \ref{ThLLTEx} one needs to explain how to normalize $\rho.$ This will be done in
Section \ref{ScGreen} (see equations \eqref{XMNorm} and \eqref{RhoMNorm}).

It will be shown in Section \ref{ScCLT} (see equation \eqref{RM-SRW}) that
with this choice of normalization, we have in Example \ref{ExPert} that
$\displaystyle \lim_{|k|\to\infty} \rho(k)=1.$ Thus if in Theorem \ref{ThLLTEx}(b) $|k_N|\to\infty$ then \eqref{LLT-RhoPert}
can be simplified to read
$$ \lim_{N\to\infty}  \frac{\mathbb{P}\left(\xi(N)=k_N\right)}{\bP\left(\sqrt{N}\cN\in
\left[k_N-\frac{1}{2}, \, k_N+\frac{1}{2}\right]\right)}=2. $$
That is in that case the Local Limit Theorem takes the same form as for the simple random walk.
\end{remark}
While the Central Limit Theorem was
studied for many classes of RWRE, the Local Limit Theorem is
less well understood. We note that there are two different classes of walks where the CLT
is known and so it
makes sense to study the LLT: ballistic walks are investigated in \cite{BCR, DG3, LS, S2} and
balanced walks in \cite{CD, S2, Stn}. In both cases the Local Limit Theorem takes the same form
\eqref{LLT-Rho},
but the meaning of $\rho$ is different: for ballistic walks $\rho_z$ is the expected number of visits to
the site $z$ while for recurrent walks it is proportional to the invariant measure of the walk restricted to a finite domain.
For this reason different methods are usually employed to study these two cases. In the present paper we adapt the method
used in \cite{DG3} to study the ballistic walks to the recurrent case (our approach is a modification of the method of
\cite{G2} and is related to extraction of a binomial component approach used in \cite{DMD}).
The universality of this method makes it promising in other problems where the Local Limit Theorem can be expected.

\smallskip
To formulate our last result we need one more definition.
In Examples \ref{ExQP} and \ref{ExInd}
a bounded function $h:\mathbb{S}\to \reals$ will be called {\em self-averaging}
if there is a constant $\fh$
(the average of $h$) and a sequence $\delta_N$
converging to $0$ as $N\to\infty$ such that for each $\eps, K$
for each $k$ with $|k|\leq K \sqrt{N}$
\begin{equation}
\label{UnifConv1}
\left|\frac{1}{2 \delta_N N^{1/4}}\sum_{l=k-\delta_N N^{1/4}}^{k+\delta_N N^{1/4}}\rho_l h_l
-\fh\right|\leq \eps
\end{equation}
where $h_l$ is a vector with components $h_l(j)=h(l,j),$
$\rho_l$ is the vector defined in \eqref{RhoEq}, whose components are denoted by $\rho_l(j),$ and
$$ \rho_l h_l=\sum_{j=1}^m \rho_l(j) h(l,j). $$
In Example \ref{ExPert} the
 walk is periodic with period 2, so
\eqref{UnifConv1} has to be replaced by
{\footnotesize
\begin{equation}
\label{UnifConv-Per}
\left|\frac{1}{2 \delta_N N^{1/4}}\sum_{l=-\delta_N N^{1/4}}^{\delta_N N^{1/4}}\rho_{k+2l} h_{k+2l}
-\fh\right|\leq \eps
\text{ and }
\left|\frac{1}{2 \delta_N N^{1/4}}\sum_{l=-\delta_N N^{1/4}}^{\delta_N N^{1/4}}\rho_{k+1+2l} h_{k+1+2l}
-\fh\right|\leq \eps.
\end{equation}}
The meaning of the notion of the self-averaging will be explained later (see Remark~\ref{RemExt}).

\begin{theorem}
\label{ThEnvMixEx}
{\bf (Mixing of environment viewed by the particle)}
If $h:\mathbb{S}\to\reals$ is self-averaging
then 
$$ \lim_{N\to\infty} \mathbb{E}(h(\xi(N)))=\frac{\fh}{a}. $$
where $a$ is the same as in \eqref{LLT-Rho}
\end{theorem}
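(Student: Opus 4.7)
The plan is to expand $\mathbb{E}(h(\xi(N)))$ as $\sum_{(k,y)} h(k,y) \mathbb{P}(\xi(N)=(k,y))$ and substitute in the Local Limit Theorem (Theorem \ref{ThLLTEx}), then use the self-averaging property to collapse the resulting sum to a Riemann integral of a Gaussian density weighted by the constant $\fh/a$. The fact that the window size $\delta_N N^{1/4}$ in \eqref{UnifConv1} is much smaller than the diffusive scale $\sqrt{N}$ is precisely what makes the two ingredients compatible.

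First I would handle the tail contribution. Fix a large $K>0$. Since $h$ is bounded, the functional CLT (Theorem \ref{ThFCLT}) together with \eqref{BPNorm} (which keeps $\rho$ uniformly bounded) yields a standard Gaussian-tail estimate
\[
\left|\sum_{|k|>K\sqrt{N}} \sum_y h(k,y)\, \mathbb{P}(\xi(N)=(k,y))\right| \le \|h\|_\infty \,\mathbb{P}(|X(N)|>K\sqrt N) \le \eta(K),
\]
where $\eta(K)\to 0$ as $K\to\infty$ uniformly in $N$. So up to an error $\eta(K)$ it suffices to study the bulk sum $S_N(K)=\sum_{|k|\le K\sqrt N} \sum_y h(k,y)\,\mathbb{P}(\xi(N)=(k,y))$.

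On the bulk, Theorem \ref{ThLLTEx} gives, uniformly in $|k|\le K\sqrt N$ and in $y$,
\[
\mathbb{P}(\xi(N)=(k,y)) = \frac{1}{a}\, g_N(k)\, \rho(k,y)\,(1+o(1)),
\quad g_N(k):=\mathbf{P}\!\left(\sqrt{bN/a}\,\cN\in[k-\tfrac12,k+\tfrac12]\right),
\]
in Examples \ref{ExQP}, \ref{ExInd}; in Example \ref{ExPert} the same expression holds with constants $2$ and $g_N(k)=\mathbf{P}(\sqrt{N}\cN\in[k-\tfrac12,k+\tfrac12])$ and the sum restricted by the parity condition $k\equiv N\pmod 2$. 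Thus
\[
S_N(K) = \frac{1}{a}\sum_{|k|\le K\sqrt N} g_N(k)\,\rho_k h_k \,(1+o(1)).
\]

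Next I would partition the interval $[-K\sqrt N,\,K\sqrt N]$ into consecutive blocks $B_j$ of length $L_N=2\delta_N N^{1/4}$ (respectively of even/odd points of that length in the perturbed case, using \eqref{UnifConv-Per}). Since $g_N$ is a Gaussian density evaluated at a lattice and varies on the macroscopic scale $\sqrt N$, and $L_N/\sqrt N\to 0$, we have $g_N(k)=g_N(k_j)(1+o(1))$ uniformly for $k\in B_j$, where $k_j$ is the center of $B_j$. Hence
\[
\sum_{k\in B_j} g_N(k)\,\rho_k h_k = g_N(k_j)\sum_{k\in B_j}\rho_k h_k\,(1+o(1)).
\]
By the self-averaging hypothesis \eqref{UnifConv1} (or \eqref{UnifConv-Per}),
$\sum_{k\in B_j}\rho_k h_k = L_N\,\fh\,(1+o(1))$ uniformly in $j$ with $|k_j|\le K\sqrt N$. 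Summing over $j$,
\[
S_N(K) = \frac{\fh}{a}\sum_{j} g_N(k_j)\, L_N\,(1+o(1))
       = \frac{\fh}{a}\sum_{|k|\le K\sqrt N} g_N(k)\,(1+o(1)),
\]
where the last step again uses that $g_N$ is nearly constant on each block. Finally, $\sum_{k\in\mathbb{Z}} g_N(k)=1$ and the tail outside $[-K\sqrt N, K\sqrt N]$ contributes at most $\eta(K)$, so letting $N\to\infty$ and then $K\to\infty$ gives $\lim \mathbb{E}(h(\xi(N)))=\fh/a$, as required.

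The main technical obstacle is verifying that the LLT approximation can be applied and summed uniformly on the bulk together with the block replacement; the key observation that makes this work is the separation of scales $1 \ll \delta_N N^{1/4}\ll \sqrt N$, which lets the slowly varying Gaussian factor be decoupled from the rapidly fluctuating but self-averaging factor $\rho_k h_k$. The adaptation to Example \ref{ExPert} is routine once one restricts all sums to the appropriate parity class and uses the corresponding version \eqref{UnifConv-Per} of self-averaging.
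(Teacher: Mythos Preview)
Your argument is correct, but it takes a genuinely different route from the paper's own proof. The paper does \emph{not} deduce Theorem~\ref{ThEnvMixEx} from the Local Limit Theorem; in fact the logical order in the body of the paper is the reverse. The paper's proof (Theorem~\ref{ThEnvMix}) works in \emph{time} rather than in space: it splits the trajectory at a stopping time near $N/2$ using the accelerated walk, applies the classical local limit theorem for sums of independent geometric random variables to $\tau(W,\xi)$ (equation~\eqref{KeySpaceTme}) to replace $\EXP(h(\xi(N)))$ by a time average $\frac{1}{\delta\sqrt{N}}\sum_{l} \EXP(h(\xi(b_{N,j,l})))$, and then invokes the uniform weak law of large numbers \eqref{UnifEnvLLN} (a consequence of Lemma~\ref{LmEnvLLN} and the Green function estimates) to identify this average as $\fh/a$. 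Only afterwards, in Section~\ref{ScLLT}, is the LLT for the walker's position obtained by combining these mixing techniques with the semilocal limit theorem of Section~\ref{ScSemiLoc}.

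Your approach---substitute the positional LLT into $\sum_{(k,y)} h(k,y)\Prob(\xi(N)=(k,y))$, then block and use self-averaging---is shorter and conceptually transparent \emph{given} Theorem~\ref{ThLLTEx}, and the scale separation $\delta_N N^{1/4}\ll\sqrt{N}$ you identify is exactly right. The trade-off is that Theorem~\ref{ThLLTEx} (via Theorem~\ref{ThLLT}) rests on the quantitative hypotheses \eqref{RCMart}--\eqref{RC-Occ-Q} and on the machinery of Sections~\ref{ScSemiLoc}--\ref{ScEnvMix}, whereas the paper's direct proof of mixing needs only the qualitative conditions \eqref{UnifConv1}, \eqref{UnifConv2}, \eqref{CanStay} and the Green function asymptotics. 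So the paper's route is logically lighter and more general; yours is a clean corollary once the hard work of the LLT is done, but you should be aware that in the paper's development you would be invoking a result whose proof already borrows ingredients (the path decomposition and \eqref{KeySpaceTme}) from the very section where mixing is established.
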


\begin{remark}
The term {\em mixing} here refers to the fact that the expectation above
is asymptotically independent of $N.$
It follows from our proof that it is also independent of the starting point of the walk. Therefore Theorem \ref{ThEnvMixEx}
shows that the environment seen by the walker does not remember the remote past of the walker. Results
similar to Theorem~\ref{ThEnvMixEx} are sometimes called {\em renewal theorems} since mixing for certain
systems allows us to recover the classical renewal theorems.
We prefer the term {\em mixing} since it appears
to describe the phenomenon more precisely.
\end{remark}

The environment viewed by the particle process is a standard tool in studying the random
walk \cite{Koz, BS, BS1}.
For ballistic nearest neighbor random walks on $\integers$ in independent environments
the mixing of this process was obtained in \cite{K} in
the annealed setting. The quenched result  was  proven in \cite{L} for independent
walks under the additional assumption that  the fluctuations are diffusive (see \cite{DG3} for a simple
proof). \cite{KS1, S2}  prove mixing for quasiperiodic walks.
The results of \cite{K} have been extended to walks on
the strip in \cite{R}. In this paper we obtain quenched mixing in both independent and quasiperiodic
environments. In fact, the novel feature of our results is that they are
applicable to the environments satisfying
explicit estimates, so no stationarity is required in our approach.

\section{The Green Function.}
\label{ScGreen}

The main result of this section is the asymptotic expansion of the Green function for the exit from a large interval
(see Lemma~\ref{LmGF}). This asymptotic expansion plays a major role in the proofs
of our main results: it allows us to compute limits of ratios of various additive functionals of our random walk using
moderate deviation estimates from Appendix \ref{AppLargeMod}.

We begin with a preliminary fact, establishing a relation between two key quantities $\fm_n$ and $\rho_n$
which appear in the expansion of the Green function

\begin{lemma}
\label{LmCurrent}
If $\fm_n$ satisfies \eqref{MartEq} and $\rho_n$ satisfies \eqref{RhoEq} then
there exist a constant $c$ such that for all $n$
$$ \rho_{n+1} Q_{n+1} (\fm_n-\zeta_n \fm_{n+1})=c, $$
$$ \rho_n P_n (\fm_{n+1}-\zeta_{n+1}^- \fm_n)=-c. $$
\end{lemma}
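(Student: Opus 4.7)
The plan is to reduce both identities to the single statement that the ``current'' $J_n := \rho_{n+1}Q_{n+1}(\fm_n - \zeta_n\fm_{n+1})$ is independent of $n$, and then establish this constancy by a direct telescoping computation. The entire argument is purely algebraic, drawing on the relations among $\rho$, $\alpha$, $\zeta$, $\zeta^-$ recorded in Section~\ref{ScMatrices} together with \eqref{MartEq} and property (II); no analytic input such as integrability of hitting times or optional stopping is required.

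The engine of the proof is a pair of flux-like identities. From $\rho_{n+1}\alpha_n = \rho_n$ and the equation $\alpha_n P_n = Q_{n+1}\zeta_n$ noted after \eqref{AAlpha}, one reads off
$$\rho_n P_n \;=\; \rho_{n+1}Q_{n+1}\zeta_n.$$
Symmetrically, $\rho_n \alpha_{n+1}^- = \rho_{n+1}$ combined with $\alpha_{n+1}^- Q_{n+1} = P_n \zeta_{n+1}^-$ (which is \eqref{EqZeta-} premultiplied by $P_n$, together with \eqref{DefA-}) gives
$$\rho_{n+1}Q_{n+1} \;=\; \rho_n P_n \zeta_{n+1}^-.$$
Plugging both identities into the second expression of the lemma yields
$$-\rho_n P_n(\fm_{n+1}-\zeta_{n+1}^-\fm_n) \;=\; -\rho_{n+1}Q_{n+1}\zeta_n\fm_{n+1}+\rho_{n+1}Q_{n+1}\fm_n \;=\; J_n,$$
so the second claim is a formal consequence of the first.

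It remains to show $J_n = J_{n-1}$. Here I will use the ``telescoping form'' $\rho_{n+1}Q_{n+1} = \rho_n(I - R_n - Q_n\zeta_{n-1})$, which is immediate from $\rho_{n+1}\alpha_n = \rho_n$ and the definition of $\alpha_n$ in \eqref{AAlpha}. Applied to $\fm_n$ and combined with \eqref{MartEq} in the form $\rho_n(I-R_n)\fm_n = \rho_n P_n\fm_{n+1} + \rho_n Q_n\fm_{n-1}$, this gives
$$\rho_{n+1}Q_{n+1}\fm_n \;=\; \rho_n P_n\fm_{n+1} + \rho_n Q_n\fm_{n-1} - \rho_n Q_n\zeta_{n-1}\fm_n.$$
Substituting this together with $\rho_{n+1}Q_{n+1}\zeta_n\fm_{n+1} = \rho_n P_n\fm_{n+1}$ into $J_n = \rho_{n+1}Q_{n+1}\fm_n - \rho_{n+1}Q_{n+1}\zeta_n\fm_{n+1}$, and comparing with the analogous expansion of $J_{n-1} = \rho_n Q_n \fm_{n-1} - \rho_n Q_n\zeta_{n-1}\fm_n$, the four unwanted terms cancel pairwise and one is left with $J_n - J_{n-1} = 0$. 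Setting $c := J_0$ completes the proof.

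The only real choice in the argument is picking the right flux identities to pull out; once they are available the rest is mechanical matrix algebra. The main thing to watch is the row/column convention (here $\rho_n$ is a row vector and $\fm_n$ a column vector), which makes every product in the computation unambiguous.
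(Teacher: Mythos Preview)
Your proof is correct and uses the same algebraic ingredients as the paper's, but is organized a bit differently. The paper first introduces $u_n=\fm_n-\zeta_n\fm_{n+1}$ and proves the recursion $u_n=A_n u_{n-1}$; the constancy of $J_n$ then follows in one line from $Q_{n+1}A_n=\alpha_n Q_n$ and $\rho_{n+1}\alpha_n=\rho_n$. It then shows separately that the second expression is constant and finally relates the two constants by rewriting both as $\rho_{n+1}Q_{n+1}\fm_n-\rho_nP_n\fm_{n+1}$. You instead first reduce the two identities to one another via the pair of flux relations $\rho_nP_n=\rho_{n+1}Q_{n+1}\zeta_n$ and $\rho_{n+1}Q_{n+1}=\rho_nP_n\zeta_{n+1}^-$, and then verify $J_n=J_{n-1}$ by expanding with $\rho_{n+1}Q_{n+1}=\rho_n(I-R_n-Q_n\zeta_{n-1})$ and the martingale equation. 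Your route is slightly more economical for this lemma taken in isolation; the paper's route has the side benefit of establishing the recursion $u_n=A_n u_{n-1}$, which is reused immediately afterward in the analysis leading to \eqref{LocGrowth} and \eqref{MultiStepMU}.
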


This lemma complements \cite[Lemmas 4.5 and 4 .6]{DG5} where other relations between
$\rho_n$ and $\fm_n$ are described.

\begin{proof}
Let
\begin{equation}\label{DefU}
u_n=\fm_n-\zeta_n \fm_{n+1}.
\end{equation}
Then
\begin{equation}
\label{URec}
u_n=A_n u_{n-1}.
\end{equation}
Indeed \eqref{MartEq} can be rewritten as
\begin{equation}
\label{MartEq2}
(I-R_n) \fm_n=P_n \fm_{n+1}+ Q_n \fm_{n-1}
\end{equation}
Since $\zeta_n=(I-R_n-Q_n \zeta_{n-1})^{-1} P_n $
we have
$$ P_n=(I-R_n-Q_n \zeta_{n-1}) \zeta_n. $$
Plugging this into \eqref{MartEq2} we get
$$ (I-R_n) \fm_n= (I-R_n-Q_n \zeta_{n-1}) \zeta_n \fm_{n+1}+ Q_n \fm_{n-1}. $$
Subtracting $Q_n \zeta_{n-1} \fm_n$ from both sides we get
$$ (I-R_n-Q_n \zeta_{n-1}) \fm_n= (I-R_n-Q_n \zeta_{n-1}) \zeta_n \fm_{n+1}+
Q_n (\fm_{n-1}-\zeta_{n-1}\fm_n) $$
Multiplying both sides by $(I-R_n-Q_n \zeta_{n-1})^{-1}$ and remembering that
$$A_n=(I-R_n-Q_n \zeta_{n-1})^{-1} Q_n$$ we obtain \eqref{URec}.

Observe that \eqref{AAlpha} implies that
$Q_{n+1} A_n=\alpha_n Q_n.$ Hence \eqref{URec} gives
$$ \rho_{n+1} Q_{n+1} u_n=\rho_{n+1} Q_{n+1} A_n u_{n-1}=
\rho_{n+1} \alpha_n Q_n u_{n-1}=\rho_n Q_n u_{n-1}$$
proving the first claim of the lemma. A similar computation shows that
$$ \rho_n P_n (\fm_{n+1}-\zeta_{n+1}^- \fm_n)=c^-. $$
It remains to relate $c$ to $c^-.$ To this end note that
\begin{align*} c&=\rho_{n+1} Q_{n+1} (\fm_n-\zeta_n \fm_{n+1})\\
&=\rho_{n+1} Q_{n+1} \fm_n-\rho_{n+1} Q_{n+1}(I-R_n-Q_n \zeta_{n-1})^{-1} P_n \fm_{n+1}\\
&=\rho_{n+1} Q_{n+1} \fm_n-\rho_{n+1} \alpha_n P_n \fm_{n+1}= \rho_{n+1} Q_{n+1} \fm_n -\rho_n  P_n \fm_{n+1}.
\end{align*}
Likewise
$$ c^-=\rho_n  P_n \fm_{n+1}-\rho_{n+1} Q_{n+1} \fm_n=-c$$
finishing the proof.
\end{proof}

Next, \eqref{BPNorm} and property \eqref{EqPositive} imply the following Lemma.
\begin{lemma} Suppose that \eqref{EqC2*} is satisfied and the potential \eqref{defPhi} is bounded.
Then:
\newline $(\mathrm{i})$ there is a bounded solution $u_n$ to $u_n=A_n u_{n-1}, $ $\infty<n< \infty$,
and such that all entries of all vectors $u_n$ have the same sign.
\newline $(\mathrm{b})$ if a sequence of vectors $\tu_n$ satisfies $\tu_n=A_n \tu_{n-1}, $ $\infty<n< \infty$, and
$\frac{1}{n}\ln \|\tu_n\|\to 0$ as $n\to -\infty$ then $\tu_n$ is bounded and proportional to $u_n$.
\end{lemma}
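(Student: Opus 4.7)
The plan is to construct $u_n$ explicitly from the dominant-direction vectors $v_n$ of \eqref{v_n}, and then to leverage the exponential contraction \eqref{v_n1} together with the bounded-potential estimate \eqref{BPNorm} to pin any qualifying solution $\tu_n$ onto the line $\mathbb{R} v_n$.

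For part (i), I would take $u_n = c_n^{*} v_n$ with $\|v_n\|=1$ and scalars $c_n^{*}$ fixed by the recursion $c_n^{*} = \lambda_n c_{n-1}^{*}$ (say $c_0^{*}=1$); the identity $A_n v_{n-1}=\lambda_n v_n$ from \eqref{EqPropl_n} then gives $A_n u_{n-1}=u_n$. For $n\ge 1$, one has $c_n^{*}=\lambda_1\cdots\lambda_n = \|A_n\cdots A_1 v_0\|$ by \eqref{prodAandAlpha}, and \eqref{BPNorm} applied to the non-negative unit vector $v_0$ places this quantity in $[e^{-\tC_P},e^{\tC_P}]$; the symmetric computation for $n\le -1$ gives $c_n^{*} = \|A_0\cdots A_{n+1} v_n\|^{-1}$, bounded in the same way. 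Hence $c_n^{*}$ is bounded away from $0$ and $\infty$, and since $v_n>0$ componentwise, every entry of every $u_n$ is strictly positive.

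For part (ii), decompose $\tu_a=\tu_a^{+}-\tu_a^{-}$ into positive and negative parts and write, for any fixed $n$ and any $a<n$,
\[
\tu_n = A_n\cdots A_{a+1}\tu_a^{+} - A_n\cdots A_{a+1}\tu_a^{-}.
\]
Applying \eqref{v_n1} to each non-negative summand gives $A_n\cdots A_{a+1}\tu_a^{\pm} = \mu_n^{\pm}(a)\bigl(v_n + r_n^{\pm}(a)\bigr)$, where $\mu_n^{\pm}(a)=\|A_n\cdots A_{a+1}\tu_a^{\pm}\|$ and $\|r_n^{\pm}(a)\|\le C\theta^{n-a}$, so
\[
\tu_n = \bigl[\mu_n^{+}(a)-\mu_n^{-}(a)\bigr]\,v_n + E_n(a),\qquad \|E_n(a)\|\le C\theta^{n-a}\bigl(\mu_n^{+}(a)+\mu_n^{-}(a)\bigr).
\]
From \eqref{BPNorm} one has $\mu_n^{\pm}(a)\le e^{\tC_P}\|\tu_a\|$, while the hypothesis $\tfrac{1}{n}\ln\|\tu_n\|\to 0$ forces $\|\tu_a\|=e^{o(|a|)}$ as $a\to -\infty$; hence $E_n(a)\to 0$, and since $\tu_n$ is fixed with $v_n\neq 0$, the scalar $\mu_n^{+}(a)-\mu_n^{-}(a)$ must converge to some $c_n$ with $\tu_n = c_n v_n$. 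Applying $A_{n+1}$ and using \eqref{EqPropl_n} shows $c_{n+1}=\lambda_{n+1}c_n$, the same recursion governing the $c_n^{*}$ of part (i); thus $\tu_n = (c_0/c_0^{*})\,u_n$, and in particular $\tu_n$ is bounded.

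The main obstacle is controlling the remainder $E_n(a)$, which pits the exponential contraction $\theta^{n-a}$ from \eqref{v_n1} against the possible growth of $\|\tu_a\|$. What makes the balance work is the uniform bound $\|A_n\cdots A_{a+1}\|\le e^{\tC_P}$ coming from \eqref{BPNorm}, which replaces the propagator norm by a constant and reduces the needed control on $\|\tu_a\|$ to the sub-exponential one supplied by the hypothesis. One should also record that this same hypothesis prevents $\tu_a$ from vanishing for all sufficiently large $|a|$, so the positive/negative part decomposition remains non-trivial throughout.
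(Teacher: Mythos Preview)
Your proof is correct and follows essentially the same route as the paper's: construct $u_n$ as a scalar multiple of $v_n$ with scalars governed by the $\lambda_n$-recursion and bounded via \eqref{BPNorm}, then for part (ii) split $\tu_a$ into positive and negative parts, push each forward with the contraction estimate \eqref{v_n1}, and use \eqref{BPNorm} plus sub-exponential growth to kill the remainder. The only cosmetic difference is that the paper phrases the conclusion as $\tu_n=c_n u_n$ with $c_n$ constant, whereas you write $\tu_n=c_n v_n$ with $c_n$ satisfying the same recursion as $c_n^{*}$; these are equivalent. (Your closing remark about $\tu_a$ not vanishing is slightly misdirected: if one of $\tu_a^{\pm}$ happens to be zero the argument simply degenerates to a one-term version, so no extra care is needed there.)
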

\begin{proof} It will be convenient to use the following notation: for $k\le n$
\[
\mathbf{A}_n^k=A_n...A_{k+1}, \text{ with the convention } \mathbf{A}_{n}^{n}=I, \ \ \mathbf{A}_{n}^{n-1}=A_n.
\]
We make use of $v_n$ from \eqref{v_n}. To construct $u_n$ for $n\ge 0$
set $u_0=v_0$ and define $u_n=\mathbf{A}_n^0u_{0}$ for $n\ge 1$.  Obviously, $u_n=A_n u_{n-1}$ if $n\ge 1$.

For $n\le -1$, set $u_n\de\lambda_0^{-1} ... \lambda_{n+1}^{-1}v_{n}$. Then, taking into account \eqref{EqPropl_n}, we have for $n\le0$:
\[
A_n u_{n-1}=\lambda_0^{-1} ... \lambda_{n+1}^{-1}\lambda_{n}^{-1}A_nv_{n-1}=
\lambda_0^{-1} ... \lambda_{n+1}^{-1}v_{n}= u_n.
\]
The vectors $u_n$ are strictly positive since $v_n>0$ for all $n$.
Finally, since $u_n= A_{n} \dots A_{k+1}u_k$ for any $n> k$,
the inequalities \eqref{BPNorm} imply that $e^{-\tC_P}\|u_k\|\leq \|u_n\|\leq e^{\tC_P}\|u_k\|$ and so
this solution is bounded. This completes the proof of (i).

Proof of (ii). We shall show that for any fixed $n$ there is a $c$ such that
$\tu_n=cu_n$.

For $k\le n$, present $\tu_{k}=\tu_{k}^+-\tu_{k}^-$, where $\tu_k^+\ge0$ and $\tu_k^-\ge0$ are,
respectively, the positive and the negative part of $\tu_{k}$. Then $\|\tu_k^+\|\le \|\tu_k\|$ and $\|\tu_k^-\|\le \|\tu_k\|$.
It follows from \eqref{BPNorm} that $\|\mathbf{A}_{n}^{k}\tu_{k}^+\|=\cO(\|\tu_{k}^+\|)=\cO(\|\tu_{k}\|)$. This, together with \eqref{v_n1}
implies $\mathbf{A}_{n}^{k}\tu_{k}^+=\|\mathbf{A}_{n}^{k}\tu_{k}^+\| (v_n+\cO(\theta^{n-k}))=
\|\mathbf{A}_{n}^{k}\tu_{k}^+\| v_n+ \cO(\|\tu_k\|\theta^{n-k})$. Finally,
\[
\mathbf{A}_{n}^{k}\tu_{k}^+=
\|\mathbf{A}_{n}^{k}\tu_{k}^+\| \|u_n\|^{-1}u_n+ \cO(\|\tu_k\|\theta^{k-n})
\]
since $v_n={u_{n}}/{\|u_n\|}$. Similarly, $\mathbf{A}_{n}^{k}\tu_{k}^-=
\|\mathbf{A}_{n}^{k}\tu_{k}^+\| \|u_n\|^{-1}u_n+ \cO(\|\tu_k\|\theta^{n-k})$.
But $\tu_{n}=\mathbf{A}_{n}^{k}\tu_{k}^+ - \mathbf{A}_{n}^{k}\tu_{k}^-$ we have
\[
\tu_{n}=(\|\mathbf{A}_{n}^{k}\tu_{k}^+\| -\|\mathbf{A}_{n}^{k}\tu_{k}^-\| )\|u_n\|^{-1}u_n+\cO(\|\tu_k\|\theta^{n-k}).
\]
The growth of $\|\tu_{k}^-\|$ is sub-exponential and therefore, sending $k\to-\infty$, we see that
$c_n\de\lim_{k \to-\infty}(\|\mathbf{A}_{n}^{k}\tu_{k}^+\| -\|\mathbf{A}_{n}^{k}\tu_{k}^-\| )\|u_n\|^{-1}$
exists and $\tu_{n}=c_nu_{n}$. But then $c_nu_n=c_{n-1}A_nu_{n-1}$ by the definition of $\tu_n$ which,
together with \eqref{URec}, implies that $(c_n-c_{n-1})u_n=0$. Hence $c_n=c_{n-1}=\mathrm{const}$.
\end{proof}

Returning to the main equation \eqref{MartEq} for $\fm_n$, we have now two possibilities.

(1) $u_n\equiv 0.$ In this case, \eqref{DefU} implies that for each $k\ge 1,$
$\fm_n=\zeta_n \zeta_{n+1}\dots \zeta_{n+k} \fm_{n+k}.$
Sending $k$ to infinity and using contracting properties of stochastic matrices  we see that
$\fm_n\equiv c \one $ for some constant $c.$

(2) $u_n$ is non zero, so its entries have the same sign. Then $c=\rho_{n+1} Q_{n+1} u_n \neq 0.$

We note that in the second case the martingale increases faster than some linear function.
Namely there are constants $C_1, C_2$ such that for any $n,k\in \integers$, $k\ge0$ and for any
 $i,j\in \{1\dots m\}$ we have
 \begin{equation}
 \label{LocGrowth}
\frac{k}{C_1}-C_2\leq \fm_{n+k}(j)-\fm_n(i)\leq C_1 k+C_2.
 \end{equation}
 Indeed, iterating \eqref{DefU} we obtain
  \begin{equation}
 \label{MultiStepMU}
 \fm_n=\sum_{r=0}^{k-1} u_{n,r}+ \zeta_n\dots \zeta_{n+k-1} \fm_{n+k}
  \end{equation}
 where $u_{n,r}=\zeta_n\dots \zeta_{n+r-1} u_{n+r}.$
Note that by \eqref{BPNorm} the components of $u_n$ are uniformly bounded from above
and bounded away from zero.
Since $\zeta$'s are stochastic, we conclude that
the components of $u_{n,r}$ are uniformly bounded from above and bounded away from zero.
Since $\fm$ has bounded increments we have that for each $j=1,\dots, m,$
$\fm_{n+k}=\fm_{n+k}(j)\one+\cO(1). $ Plugging this into \eqref{MultiStepMU} and
using that $\zeta_n\dots \zeta_{n+k-1}\one=\one$ we obtain \eqref{LocGrowth}.

In this paper we deal with the case where the martingale $\fm$ is non-trivial. Moreover,
for the rest of the paper (except for the Section \ref{ScSkew})
we assume that $\fm$ is asymptotically linear and that $\fm$ and
$\rho$ are normalized so that
\begin{equation}
\label{XMNorm}
\lim_{n\to \infty} \frac{\cM_n}{n} =m,
\end{equation}
where $\cM_n=\sum_{j=1}^m \fm_n(j)$
and
\begin{equation}
\label{RhoMNorm}
\rho_n P_n (\fm_{n+1}-\zeta_{n+1}^- \fm_n)=\frac{1}{2m}.
\end{equation}
Note that \eqref{XMNorm} and \eqref{BndInc} imply that
\begin{equation}
\label{XMNorm2}
\lim_{n\to\infty} \frac{\fm_n(y)}{n} =1
\end{equation}
uniformly in $y\in \{1,\dots, m\}.$

Asymptotically linear martingales exist in Examples \ref{ExQP}--\ref{ExPert}.
In fact, in Section \ref{SSCLTEx}
we will establish a stronger
result \eqref{RCMart}.


Consider the Green function
$$G_{a,b}((k, i); (n,j))=\mathbb{E}(\eta_{(n,j)}|\xi(0)=(k,i)),$$
where $\eta_{(n,j)}$ is the number of visits to $(n,j)$ by the walk starting at $(k,i)$ before it
hits the segment $[a, b].$

\begin{lemma} For $x,\ y\in (a,b)$
\label{LmGF}
$$G_{a,b}((k,i); (n,j))=\cG(\cM_k, \cM_n, \cM_a, \cM_b) \rho_n(j)+\cO(1) $$
where
$$\cG(x,y, a, b)=\frac{2(\min(x,y)-a)(b-\max(x,y))}{(b-a)}. $$
\end{lemma}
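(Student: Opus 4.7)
The plan is to mimic the classical one-dimensional computation, in which the Green function is \emph{exactly} $\rho_n\cG(\fm_k,\fm_n,\fm_a,\fm_b)$, with Lemma \ref{LmCurrent} playing the role of the one-dimensional current identity $\rho_n p_n(\fm_{n+1}-\fm_n)=\tfrac12$. Concretely, I would introduce the modified ansatz
\[
\tilde H(k,i) := \rho_n(j)\,\cG\bigl(m\fm_k(i),\,m\fm_n(j),\,\cM_a,\,\cM_b\bigr),
\]
which, by \eqref{XMNorm2} and the fact that $\cG$ is Lipschitz in each argument with constant at most $2$, agrees with $\rho_n(j)\cG(\cM_k,\cM_n,\cM_a,\cM_b)$ up to $\cO(1)$. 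The advantage of $\tilde H$ is that on each of the two regions $\{m\fm_k(i)\le m\fm_n(j)\}$ and $\{m\fm_k(i)\ge m\fm_n(j)\}$ it is an affine function of $\fm_k(i)$; since $\fm$ is exactly $\fP$-harmonic by \eqref{MartEq}, $\tilde H$ is $\fP$-harmonic on each of these regions. By \eqref{BndInc} and \eqref{LocGrowth}, the ``kink region'' where $k-1,k,k+1$ fail to lie on the same side of the ridge $\{m\fm_\ell(y)=m\fm_n(j)\}$ consists of only $\cO(1)$ layers clustered around $n$.

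Inside the kink region I would expand $(\fP-I)\tilde H(k,i)$ directly. After the bulk terms cancel via \eqref{MartEq}, the residual is built precisely from the combinations $Q_{n+1}(\fm_n-\zeta_n\fm_{n+1})$ and $P_n(\fm_{n+1}-\zeta_{n+1}^-\fm_n)$ controlled by Lemma \ref{LmCurrent}. Combining that lemma with the normalization \eqref{RhoMNorm} (which fixes the current to $\pm\tfrac{1}{2m}$) and unpacking the factor of $m$ hidden inside the arguments of $\cG$, the Poisson source produced by $\tilde H$ reduces to exactly $-1$ at height $(n,j)$:
\[
(\fP-I)\tilde H = -\delta_{(n,j)} + r,
\]
with $r$ supported in the $\cO(1)$-neighborhood of layer $n$ and $\|r\|_\infty = \cO(1)$.

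Finally, the error $E := G_{a,b}(\cdot;(n,j))-\tilde H$ vanishes on $\{a,b\}\times[1,m]$ and solves $(\fP-I)E=-r$, so $E(k,i)=\sum_z G_{a,b}((k,i);z)\,r(z)$. A crude a priori bound $G_{a,b}((k,i);z) = \cO(\cM_b-\cM_a)$ (from optional stopping applied to $M_t^2-\langle M\rangle_t$, which controls $\mathbb{E}_{(k,i)}[\tau_{ab}]$) is available; but because $r$ is supported in an $\cO(1)$-neighborhood of $(n,j)$ where $G_{a,b}((k,i);z)$ differs from $G_{a,b}((k,i);(n,j))$ by only $\cO(1)$ (Lipschitz dependence of $\cG$ in its second argument), and because the precise sign pattern of $r$ dictated by Lemma \ref{LmCurrent} makes the leading-order term in $\sum_z G_{a,b}((k,i);z)r(z)$ telescope, one concludes $|E|=\cO(1)$, which is the claim. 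The main obstacle is the algebraic calculation in the kink region: the discrete Laplacian of $\tilde H$ must be rearranged into exactly the combinations on which Lemma \ref{LmCurrent} operates, and the normalization \eqref{RhoMNorm} is what makes the total strength of the source come out to be exactly $-1$ rather than an arbitrary constant.
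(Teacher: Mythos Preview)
Your ansatz approach is attractive and works cleanly in the one-dimensional case, where the kink of $\cG$ sits at the single site $n$ and the computation $(\fP-I)\tilde H(n)=-2\rho_np_n(\fm_{n+1}-\fm_n)=-1$ goes through exactly via \eqref{RhoMNorm}. On the strip, however, there is a genuine gap in the last step. The kink of $\tilde H$ lives on the level set $\{(k,i):\fm_k(i)\approx\fm_n(j)\}$, which is an $\cO(1)$ band of layers but involves \emph{all} vertical coordinates $i$, not just $j$. The source $s:=(\fP-I)\tilde H$ is therefore spread across this band with $\cO(1)$ values at each of $\cO(m)$ sites; it depends on $(n,j)$ only through the scalars $\fm_n(j)$ and $\rho_n(j)$, so there is no mechanism singling out the site $(n,j)$. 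Your decomposition $s=-\delta_{(n,j)}+r$ thus has $r$ of size $\cO(1)$ on $\cO(m)$ sites, and $E(k,i)=\sum_z G_{a,b}((k,i);z)\,r(z)$ is a priori $\cO(d)$, not $\cO(1)$.

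The two devices you invoke to close this do not work as stated. First, Lemma~\ref{LmCurrent} controls the $\rho$-weighted scalar $\rho_{n+1}Q_{n+1}u_n$, not the individual components $e_iQ_{n+1}u_n$ that appear in $s(k,i)$; it does not produce the pointwise ``sign pattern'' you need for a telescoping sum. Second, your appeal to ``Lipschitz dependence of $\cG$ in its second argument'' to bound $G_{a,b}((k,i);z)-G_{a,b}((k,i);(n,j))$ is circular: that Lipschitz property is precisely the content of the lemma you are proving. (Even the known within-layer statement from \cite{DG2}, $G_{a,b}((k,i);(n,j))=\tG_{a,b}((k,i);n)\rho_n(j)/(\rho_n\one)+\cO(1)$, only says $G/\rho_n(j)$ is approximately independent of $j$, not that $G$ itself is.) The paper avoids this difficulty by a probabilistic route: it first reduces to the layer-summed Green function $\tG$ via the \cite{DG2} result, then computes $\tG_{a,b}((n,i);n)$ directly by showing the number of returns to $\bbL_n$ is approximately geometric and evaluating its parameter through the Optional Stopping Theorem and Lemma~\ref{LmCurrent}; the case $k\neq n$ is handled by multiplying by the hitting probability of $\bbL_n$ before $\bbL_a$. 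If you want to rescue the PDE approach, you would need a sharper ansatz that resolves the within-layer structure (for instance, building $\tilde H$ from $\tG$ rather than from $G$), or an independent $\cO(1)$ bound on second-argument increments of $G$ across nearby layers.
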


\begin{proof}
Let
\begin{equation}
\label{SumGreen}
\tG_{a,b}((k,i); n)=\sum_j G_{a,b}((k,i), (n, j)).
\end{equation}
A simple computation with Markov chains \cite[Appendix A]{DG2} shows that
$$ G_{a,b}((k, i), (n, j))=\tG_{a,b}((k, i), n) \frac{\rho_n(j)}{\rho_n \one }+\cO(1) $$
so it suffices to show that
\begin{equation}
\label{GFLayer}
 \tG_{a,b}((k,i); n)=\cG(\cM_k, \cM_n, \cM_a, \cM_j)  \rho_n\one +\cO(1).
\end{equation}
We consider first the case where $k=n.$ Let $d=\min(n-a, b-n).$
Denote $\bdeta_n=\eta_n \one.$
Let $\fp_{n, a, b}$ be the vector
$$\fp_{n, a, b}(i)=\mathbb{P}(\bdeta_n=1|\xi(0)=(n, i)). $$

We claim that
\begin{equation}
\label{NoReturn}
\fp_{n, a, b}=\cO(1/d).
\end{equation}
Without loss of generality we may assume that $d=b-n.$
By \eqref{LocGrowth} there exists $k$ such that for any $n, i, j$
$\fm_{n+k}(j)\geq \fm_n(i)+1.$
Consider our walk started from $(n+k,j).$ Let $\fs$ be the first time this walk reaches
either $\bbL_n$ or $\bbL_b.$
Applying the Optional Stopping Theorem to this stopping time gives
$$ \fm_{n+k}(j)=\sum_{i=1}^m \Prob(\xi_\fs=(n,i))\fm_n(i)+
\sum_{i=1}^m \Prob(\xi_\fs=(b,i))\fm_b(i).$$
Rewriting this identity as
\begin{equation}
\label{OST-Prob}
 \sum_{i=1}^m \Prob(\xi_\fs=(n,i))[\fm_{n+k}(j)-\fm_n(i)]=
\sum_{i=1}^m \Prob(\xi_\fs=(b,i))[\fm_b(i)-\fm_{n+k}(j)]
\end{equation}
By our choice of $k$ the LHS is at least
$1-\Prob(\xi_\fs\in\bbL_b)$ while by \eqref{LocGrowth}
the RHS is at most $2 C_1 d\; \Prob(\xi_\fs\in \bbL).$
Thus
$$ 1\geq 2C_1 d \Prob(\xi_\fs\in \bbL) \text{ or } \Prob(\xi_\fs\in\bbL_b)=\cO(1/d). $$
In other words, if the walker starts from the layer $n+k$ then the probability
that it would not visit  $\bbL_n$ before reaching $\bbL_b$ is $\cO(1/d).$
By the same argument,
if the walker starts from the layer $n-k$ then the probability
that it would not visit  $\bbL_n$ before reaching $\bbL_a$ is $\cO(1/d).$
Since the walker starting from $\bbL_n$ should visit $\bbL_{n-k}$ before
reaching $\bbL_a$ and it should visit $\bbL_{n+k}$ before reaching $\bbL_b,$
\eqref{NoReturn} follows.
\medskip

Let $\tS_n$ be the matrix with components $\tS_n(i,j)$ where $\tS_n(i,j)$ is the probability that the
walker starting from $(n,i)$ returns to $\bbL_n$ for the first time at $(n,j)$ given that it does not visit
$\bbL_a$ or $\bbL_b$ in between. Let $\tpi_n$ be the stationary distribution for $\tS_n.$
Also let $S_n$ be the matrix with components $S_n(i,j)$, where $S_n(i,j)$ is the probability that the
walker starting from $(n,i)$ returns to $\bbL_n$ for the first time at $(n,j).$ Thus
$$ S_n=Q_n \zeta_{n-1}+R_n+P_n \zeta^-_{n+1}. $$
Let $\pi_n$ be the stationary distribution of $S_n.$ Note that $\pi_n$ can be expressed in terms of
$\rho_n$ as $\pi_n(j)=\frac{\rho_n(j)}{\rho_n \one}.$ Since, by \eqref{NoReturn},
 $S_n$ and $\tS_n$ differ by conditioning on a set
of measure $1-\cO\left(\frac{1}{d}\right)$ we have
$\tS_n=S_n+\cO\left(\frac{1}{d}\right).$
Moreover we can write
$$ \tS_n=Q_n \tzeta_{n-1}+R_n+P_n \tzeta^-_{n+1} $$
where $\tzeta_{n-1}(i,j)$ and $\zeta^-_{n+1}(i,j)$ are the  probabilities that the walker
starting from $(n-1,i)$ (respectively $(n+1, i)$) returns to $\bbL_n$ for the first time at $(n,j)$ given that it does not visit $\bbL_a$ or $\bbL_b$ in between.
Then
\begin{equation}
\label{ZetasClose}
\tzeta_{n-1}=\zeta_{n-1}+\cO\left(\frac{1}{d}\right), \quad
\tzeta_{n+1}^-=\zeta_{n+1}^-+\cO\left(\frac{1}{d}\right).
\end{equation}

Due to exponential mixing of both $S_n$ and $\tS_n$
(which is guaranteed by condition \eqref{EqC2*}) we have that
$$ \tpi_n=\pi_n+\cO\left(\frac{1}{d}\right)=\frac{\rho_n}{\rho_n \one}+\cO\left(\frac{1}{d}\right). $$
We have
$$ \mathbb{P}(\bdeta_n=N+1|\bdeta_n>N, \xi(0)=(n, i))=e_i \tS_n^N \fp_{n, a, b} .$$
By the first step analysis
\begin{equation}
\label{FirstStep}
 \fp_{n, a, b}(i)=\sum_{j=1}^m Q_n(i,j) \fp_n^-(j)+\sum_{j=1}^m P_n(i,j) \fp_n^+(j)
\end{equation}
where $\fp^\pm_n(j)$ is the probability that the walker starting from $(n\pm 1, j)$ does not return
to $\bbL_n$ before visiting the boundary of the segment $a,$ $b.$
By the Optional Stopping Theorem and \eqref{BndInc}
\begin{equation}
\label{PPlMin}
 \fp_n^-=\frac{ \tzeta_{n-1}\fm_n-\fm_{n-1}}{\frac{1}{m} (\cM_n-\cM_a)}+\cO\left(\frac{1}{d^2}\right),
\quad
\fp_n^+=\frac{ \tzeta_{n+1}^-\fm_n - \fm_{n+1}}{\frac{1}{m}(\cM_b-\cM_n)}+\cO\left(\frac{1}{d^2}\right).
\end{equation}
From \eqref{FirstStep} and \eqref{PPlMin} we get using \eqref{ZetasClose} that
\begin{equation}
\label{P-Zetas}
 \fp_{n, a, b}(i)=m e_i \left(Q_n \frac{ \zeta_{n-1}\fm_n-\fm_{n-1}}{\cM_n-\cM_a}+
  P_n \frac{ \zeta_{n+1}^-\fm_n - \fm_{n+1}}{\cM_b-\cM_n}\right)+\cO\left(\frac{1}{d^2}\right).
  \end{equation}

Since $e_i \tS_n^N=\tpi_n(1+\cO(\theta^N))$ for some $\theta<1$,
it follows that
$$ e_i \tS_n^N \fp_{n, a, b}=\tpi_n \fp_{n, a, b}+\cO\left(\theta^N \right) .$$
From  \eqref{P-Zetas}, Lemma \ref{LmCurrent} and \eqref{RhoMNorm} we get
$$ \tpi_n \fp_{n, a, b}=\frac{m}{\rho_n \one }
\left(\frac{\rho_n Q_n ( \fm_{n-1}-\zeta_{n-1}\fm_n )}{\cM_n-\cM_a}+
\frac{\rho_nP_n( \fm_{n+1} - \zeta_{n+1}^-\fm_n )}{\cM_b-\cM_n}+\cO\left(\frac{1}{d^2}\right)\right)$$
$$=
\frac{1}{2\rho_n \one}\left(\frac{1}{\cM_n-\cM_a}+
\frac{1}{\cM_b-\cM_n}+\cO\left(\frac{1}{d^2}\right)\right). $$

It follows that
$\tpi_n \fp_{n, a, b} \bdeta_n$ has asymptotically exponential distribution with parameter 1 and hence
$$ \tG_{a,b}((n,i); n)=\frac{1}{\tpi_n \fp_{n, a, b}}+\cO(1)=\frac{2(\cM_n-\cM_a)(\cM_b-\cM_n)}{ (\cM_b-\cM_a)}
(\rho_n \one) +\cO(1). $$
Next for $k<n$ let
$$\cP_{a}((k,i); (n,j))=\mathbb{P}(\xi \text{ reaches }\bbL_n \text{ at }(n,j) \text{ before reaching }
\bbL_a|\xi(0)=(k,i)). $$
Then
$$ \tG_{a,b}((k,i); n)
=\sum_j \cP_{a}((k,i); (n,j))
\tG_{a,b}((n,j);n)$$
$$=\left[\frac{2(\cM_n-\cM_a)(\cM_b-\cM_n)}{(\cM_b-\cM_a)}
(\rho_n \one)+\cO(1)\right]
\sum_j \cP_{a}((k,i); (n,j))
$$
 Note that
$$ \cP_{a}((k,i);n):=\sum_j \cP_{a}((k,i); (n,j))=
\mathbb{P}(\xi \text{ reaches }\bbL_n  \text{ before reaching }
\bbL_a|\xi(0)=(k,i)). $$
Applying again the Optional Stopping Theorem to the stopping time
$\fs$ which is the first time the walker reaches either $\bbL_a$ or $\bbL_b$ we  we get
$$ (\cM_n+\cO(1))\cP_a((k,i);n)+(\cM_a+\cO(1))(1-\cP_a((k,i);n))=\cM_k+\cO(1). $$
Hence
$$ \cP_{a}((k,i); n)=\frac{\cM_k-\cM_a}{\cM_n-\cM_a}+\cO\left(\frac{1}{d}\right). $$
This proves \eqref{GFLayer} for $k<n.$ The case $k>n$ is analyzed similarly.
\end{proof}

\begin{remark}
\eqref{OST-Prob} also shows that there is a constant $c$ such that for each $y\in \{1,\dots, m\}$
\begin{equation}
\label{LE-Lower}
\Prob(\bdeta_n=1|\xi(0)=(n,y))\leq \frac{c}{d}.
\end{equation}
This bound will be useful in Section \ref{ScCLT}.
\end{remark}

\section{Environment viewed by the particle: the Law of Large Numbers.}
\label{ScEnvLLN}
From now on we consider only those environments which, in addition to
\eqref{BPNorm}, \eqref{XMNorm}, \eqref{RhoMNorm},
satisfy the following assumption: there exists a constant $a$ such that
\begin{equation}
\label{AvOcc}
\lim_{N\to\pm\infty} \frac{1}{|N|} \sum_{n=0}^{N-1} \rho_n \one =a.
\end{equation}
Examples \ref{ExQP}--\ref{ExPert} satisfy \eqref{AvOcc}. In fact, in Section \ref{SSCLTEx}
we prove a stronger result \eqref{RC-Occ-V}.

Let $h:\bbS\to\reals$ be a bounded function and $h_n$ be a sequence of vectors with components
$h_n(i)=h(n,i).$
Let
\begin{equation}
\label{HNSum}
H_N=\sum_{n=0}^{N-1} h(\xi(n)).
\end{equation}
In this section we establish the following result.

\begin{lemma}
\label{LmEnvLLN}
Suppose that $h_n$ is such that
\begin{equation}
\label{AvH}
\lim_{N\to\pm\infty} \frac{1}{|N|} \sum_{n=0}^{N-1}\rho_n h_n =\fh
\end{equation}
for some constant $\fh.$
Then $\frac{H_N}{N}$ converges in probability, as $N\to\infty$, to $\frac{\fh}{a}.$
\end{lemma}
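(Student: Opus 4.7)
The strategy is to replace the deterministic time $N$ with the random exit time $\tau_L$ from $[-L, L]$, for which Lemma~\ref{LmGF} gives a clean asymptotic formula, and then transfer the conclusion to time $N$ via diffusive concentration. First I would sum the Green function identity against $h$ and against $\one$. Writing $\cG_n = \cG(\cM_0, \cM_n, \cM_{-L}, \cM_L)$,
\[
\mathbb{E}[H_{\tau_L}] = \sum_{|n|<L} \cG_n\, \rho_n h_n + \cO(L), \qquad
\mathbb{E}[\tau_L] = \sum_{|n|<L} \cG_n\, \rho_n \one + \cO(L).
\]
By the asymptotic linearity \eqref{XMNorm2}, $\cG_n = (L-|n|)(1+o(1))$ uniformly on $|n|\le L$. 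An Abel summation, combined with the Ces\`aro convergence hypotheses \eqref{AvOcc} and \eqref{AvH}, shows that both sums are dominated by the region $|n|\asymp L$, yielding
\[
\mathbb{E}[\tau_L] = a L^2 + o(L^2), \qquad \mathbb{E}[H_{\tau_L}] = \fh L^2 + o(L^2),
\]
so in particular $\mathbb{E}[H_{\tau_L}]/\mathbb{E}[\tau_L] \to \fh/a$.

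Next I would establish concentration: $\tau_L/L^2 \to a$ and $H_{\tau_L}/L^2 \to \fh$ in probability. The route is to estimate the second moments $\mathbb{E}[\tau_L^2]$ and $\mathbb{E}[H_{\tau_L}^2]$ by iterating the Green function identity. Using the strong Markov property at the first visit time to a layer $n_1$ and applying Lemma~\ref{LmGF} a second time from $(n_1, \cdot)$ produces double sums against $\cG_{n_1}\cG_{n_2}\rho_{n_1}\rho_{n_2}(\cdots)$; the leading $L^4$ term matches $(\mathbb{E}[\tau_L])^2$, and extracting the cancellation shows that the variances are $o(L^4)$, so Chebyshev gives the desired convergence in probability.

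Finally, to pass from $\tau_L$ to fixed time $N$, set $L = L_N$ with $a L_N^2 = N$; by Step 2, $\tau_{L_N}/N \to 1$ in probability. Since $h$ is bounded,
\[
|H_N - H_{\tau_{L_N}}| \leq \|h\|_\infty \cdot |N-\tau_{L_N}|,
\]
and the right-hand side divided by $N$ tends to zero in probability. Combining with $H_{\tau_{L_N}}/N = a^{-1}(H_{\tau_{L_N}}/L_N^2) \to \fh/a$ in probability concludes the argument.

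The main obstacle is the variance bound in Step 2. The first-moment identity is essentially immediate from Lemma~\ref{LmGF}, but the second-moment computation requires the $\cO(1)$ error in the Green function, summed twice, to remain lower order relative to $L^4$, and it requires that the Ces\`aro convergence in \eqref{AvOcc} and \eqref{AvH} be used together with the tent weights in a way that exhibits genuine cancellation rather than merely bounding absolute values. A cleaner alternative, if available, would be to exploit the martingale $\fm(\xi_t)$ together with the Optional Stopping Theorem at $\tau_L$ to control fluctuations of exit times directly, sidestepping the explicit iterated Green function computation; even in that route, however, some quantitative refinement of \eqref{AvOcc} and \eqref{AvH} is needed to pin down the $o(L^2)$ remainders.
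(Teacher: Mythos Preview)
Your Step 2 contains a genuine error: the exit time $\tau_L$ from a single macroscopic box does \emph{not} concentrate. Already for the simple random walk on $\integers$ one has $\EXP_0[\tau_L]=L^2$ but $\mathrm{Var}(\tau_L)=\tfrac{2}{3}L^4+O(L^3)$, and the same iterated Green-function computation you describe reproduces this for the walks of the paper: with $G(0,z)\approx a(L-|z|)$ and $g(z):=\EXP_z[\tau_L]\approx a(L^2-z^2)$ one gets $\EXP_0[\tau_L^2]=2\sum_z G(0,z)g(z)-g(0)\approx \tfrac{5}{3}a^2L^4$, hence $\mathrm{Var}(\tau_L)\approx \tfrac{2}{3}a^2 L^4$. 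The leading $L^4$ term in $\EXP[\tau_L^2]$ does \emph{not} match $(\EXP[\tau_L])^2$; after diffusive rescaling $\tau_L/L^2$ converges in law to the (non-degenerate) exit time of Brownian motion from $[-1,1]$. Consequently the choice $aL_N^2=N$ gives $\tau_{L_N}/N$ a non-trivial limit law, and Step~3 collapses: $|H_N-H_{\tau_{L_N}}|\le\|h\|_\infty|N-\tau_{L_N}|$ is of order $N$, not $o(N)$.

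The paper circumvents exactly this obstruction by working at a \emph{mesoscopic} scale $L_N=\lfloor\delta\sqrt{N}\rfloor$ and iterating: it defines successive stopping times $\tau_k$ at which $X$ has moved by $\pm L_N$ from $X_{\tau_{k-1}}$, and runs $\hat k=a^{-1}\delta^{-2}$ such steps. Each block has $\EXP(\tau_k-\tau_{k-1}\mid\xi_{\tau_{k-1}})\sim aL_N^2$ by your first-moment computation (restricted to $|X_{\tau_{k-1}}|\le K\sqrt{N}$, which is arranged via the maximal inequality for $\fm(\xi_t)$), and an exponential tail \eqref{ExpTail}. The concentration of $T_N=\tau_{\hat k}$ around $N$ and of $H_{\tau_{\hat k}}$ around $(\fh/a)N$ then comes from a law of large numbers over the $\hat k$ blocks (Proposition~\ref{PrLLNMart}), with the error controlled by taking $\delta$ small so that $\hat k$ is large. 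In short, the missing idea in your proposal is that concentration must be manufactured by averaging over many mesoscopic renewals rather than extracted from a single macroscopic one.
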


\begin{remark}
\label{RemExt}
Assumptions \eqref{AvOcc} and \eqref{AvH} imply that $h$ is an extensive observable,
that is there exists the finite volume limit
$$ \lim_{R\to\pm \infty} \frac{\int_{\bbS_R} h(z) d\mu(z)}{\mu(\bbS_R)}=\frac{\fh}{a}$$
where $\bbS_R$ is the set of points in the strip $\bbS$ such that the $x$ coordinate is
between $0$ and $R$ and $\mu$ is the invariant measure for our walk: for $A\in \bbS$
$$ \mu(A)=\sum_{z\in A} \rho(z). $$
We refer the reader to \cite{Len} for a discussion of the ergodic properties of extensive observables.
\end{remark}

A typical application of Lemma \ref{LmEnvLLN} is the following. Suppose that the environment
$\omega = ((P_n, Q_n, R_n))_{n=-\infty}^{\infty}$ is as in Remark \ref{Rem2.3}
and set $h_n=1_{(T^n \omega, Y_n) \in \cA}\one$,
where $\cA\subset \Omega\times \{1,\dots, m\}$ (this defines a function $h$ - see
Section \ref{notations}). Then $\frac{H_N}{N}$ describes
how often the walker sees the environment from $\cA.$ For example, one can ask how often the drift
or the variance of the walker's increment are
of a certain size.
Quite often the law of large numbers for $H_N$ is obtained as a consequence
of ergodicity of the {\em environment viewed by the particle} process, see e.g. \cite{BS}. This approach, however,
makes it difficult to control the exceptional zero measure set in the ergodic theorem.
In this section we present a different argument which allows one to obtain explicit sufficient conditions
for the law of large numbers (namely, \eqref{AvH}).

\begin{proof}[Proof of Lemma \ref{LmEnvLLN}]
Let us first describe the idea of the proof. Fix $\eps>0.$ We need to show that
$\DS H_N-\left(\frac{\fh}{a}-\eps\right) N$ is positive for large $N$ while
$\DS H_N-\left(\frac{\fh}{a}+\eps\right) N$ is negative for large $N$ with probability close to 1.
To this end we divide the sum \eqref{HNSum} into blocks.
Choose a small constant $\delta$ (the exact requirements on $\delta$  will be
explained later, see the proof of \eqref{NumRen} below) and let $L_N=\lfloor\delta \sqrt{N}\rfloor.$
We will consider our random walk only at the moments when
 $X(t)$ visits the nodes of the lattice $L_N\integers,$ more precisely, when $X$ moves from
 one node to the next. That is,
 define $\tau_0=0,$ and for $k>0$ let
\begin{equation}
\label{DefTauK}
\tau_k=
\min(j\geq \tau_{k-1}: X_j=X_{\tau_{k-1}}-L_N\text{ or } X_j=X_{\tau_{k-1}}+L_N).
\end{equation}
We would like to use the results of Section \ref{ScGreen} to show
$\DS H_{(k, \eps)}^+:=\sum_{n=1}^{\tau_k} \left[h(\xi(n))-\left(\frac{\fh}{a}-\eps\right)\right]$
is a submartingale and
$\DS H_{(k, \eps)}^-:=\sum_{n=1}^{\tau_k} \left[h(\xi(n))-\left(\frac{\fh}{a}+\eps\right)\right]$
is a supermartingale with respect to the natural filtration and then use the large deviation estimates
for supermartgales from Appendnix \ref{AppLargeMod}.
However, for a fixed $\delta$, \eqref{XMNorm} and \eqref{AvH} only allow us to control
the nodes of $L_N  \integers$ which are not too far from the origin, so an additional cut off is required.

Using the maximal inequality for martingales we can find a constant $K$ such that
$$ \Prob\left(\max_{t\in [0, N]} |\fm(\xi(t))|\geq \frac{K\sqrt{N}}{2}\right)\leq \frac{\eps}{2}. $$
Now \eqref{XMNorm2} gives
\begin{equation}
\label{ChoiceK}
 \Prob\left(\max_{t\in [0, N]} |X(t)|\geq K\sqrt{N}\right)\leq \frac{\eps}{2}.
\end{equation}

Denoting $ a_k=X_{\tau_{k-1}}-L_N,$ $b_k=X_{\tau_{k-1}}+L_N $ we have
\begin{equation}
\label{AvEscape}
\EXP(\tau_{k}-\tau_{k-1}|\xi_{\tau_{k-1}})=
\sum_{n=a_k}^{b_k}
\sum_{j=1}^m G_{a_k, b_k}
(\xi_{\tau_{k-1}}; (n, j)).
\end{equation}
We claim that for each $K$ we have
\begin{equation}
\label{AvEscape2}
 \EXP(\tau_{k}-\tau_{k-1}|\xi_{\tau_{k-1}})
\sim a L_N^2
\end{equation}
provided that $N$ is large enough and $|X_{\tau_{k-1}}|\leq K\sqrt{N}.$

To prove \eqref{AvEscape2}
divide the segment $[a_k, b_k]$ into subsegments
$I_j=[s_j, s_{j+1}]$ of length $[\tdelta \sqrt{N}]$ where $\tdelta\ll \delta.$
\eqref{XMNorm}, Lemma \ref{LmGF}, and \eqref{AvOcc} show that
the contribution to \eqref{AvEscape} of terms with
$n\in I_j$ is
asymptotic to
$$ \tdelta_N  a \cG(0, s_j-X_{\tau_{k-1}}; -L_N, L_N). $$
Summing over the intervals $I_j$ we obtain \eqref{AvEscape2}.

Let
\begin{equation}
\label{DefHK}
\hat k=a^{-1} \delta^{-2},
\end{equation}
$T_N=\tau_{\hat k}.$
We claim that if $\delta$ is sufficiently small then
\begin{equation}
\label{NumRen}
\Prob\left(\left|\frac{T_N}{N}- 1\right|>\eps\right)<\eps.
\end{equation}
Indeed define a sequence $\ttau_k$ such that $\ttau_0=0$ and
$$ \ttau_{k}-\ttau_{k-1}=
\begin{cases}  \tau_{k}-\tau_{k-1} & \text{if } |\xi_{\tau_{k-1}}|\leq K \sqrt{N} \\
a L_N^2 & \text{otherwise.} \end{cases} $$
We want to estimate
$\Prob(\ttau_{\hat k}\geq (1+\eps) N).$ To this end we apply Proposition \ref{PrLLNMart}
from Appendix \ref{AppLargeMod}
with $$\Delta_k=\frac{(\ttau_k-\ttau_{k-1})}{a L_N^2}-(1+\eps).$$
To apply this proposition we need to check conditions \eqref{SubMart} and \eqref{Cramer}.
For the case at  hand, \eqref{SubMart} follows from \eqref{AvEscape2}. To prove \eqref{Cramer} we use
that there exists a constant $\theta<1$ such that
for each $K\in\reals$ there is a constant $N_0=N_0(K)$
such that if $N\geq N_0$ and $|\xi_{\tau_{k-1}}|\leq K\sqrt{N}$ then
for all $l\in\naturals$
\begin{equation}
\label{ExpTail}
\Prob(\tau_{k}-\tau_{k-1}>2 a l L_N^2)<\theta^l.
\end{equation}
Indeed similarly to \eqref{AvEscape2} one can show that if $N\geq N_0(K)$ then for each $(x, y)$ such that
$|x-x(\xi_{\tau_{k-1}})|\leq L_N$ for each $s\in \integers$ we have
$$ \EXP(\tau_k-s|\xi(s)=(x,y))\leq 1.1 a L_N^2.$$
Combining this with the Markov inequality we see that for any stopping time
$\fs$
\begin{equation}
\label{ProbEscape}
 \Prob(\tau_k>\fs+2a L_N^2| \tau_k\geq \fs)\leq \frac{1.1}{2}.
 \end{equation}
 Applying \eqref{ProbEscape} with $\fs=\tau_{k-1}$ we obtain
 \eqref{ExpTail} with $l=1.$
\eqref{ExpTail} for $l>1$ follows
by induction on $l$ by applying \eqref{ProbEscape} with $\fs=\tau_{k-1}+2a (l-1).$

Now Proposition \ref{PrLLNMart} gives
$$ \Prob(\ttau_{\hat k}\geq (1+\eps) N)\leq e^{-\brc\sqrt{\eps \hk}}. $$
Likewise
$$ \Prob(\ttau_{\hat k}\leq (1-\eps) N)\leq e^{-\brc \sqrt{\eps\hk}} . $$
Combining the last two displays with \eqref{DefHK} we see that for large $N$
\begin{equation}
\label{LLN-TTau}
\Prob\left(\left|\frac{\ttau_{\hat k}}{N}- 1\right|>\eps\right)<\frac{\eps}{2}.
\end{equation}
By our choice of $K$ (see \eqref{ChoiceK}), for large $N$ we have
\begin{equation}
\label{TauT}
\Prob(\ttau_{\hat k}\neq T_N)<\frac{\eps}{2}.
\end{equation}
Combining \eqref{LLN-TTau} and \eqref{TauT}
we obtain \eqref{NumRen}.

Next, similarly to \eqref{AvEscape2} we get
\begin{equation}
\label{GH}
\EXP(H_{\tau_{k}}-H_{\tau_{k-1}}|\xi_{\tau_{k-1}})\sim \frac{\fh L_N^2}{2}
\end{equation}
and similarly to \eqref{NumRen} we get (possibly, after decreasing $\delta$) that
$$ \Prob\left(\left|\frac{H_{\tau_\brk}}{N}-\frac{\fh}{a}\right|>\eps\right)<\eps. $$
Indeed we can apply Proposition \ref{PrLLNMart} since
\eqref{SubMart} follows by
\eqref{GH} while \eqref{Cramer} follows from \eqref{ExpTail} since
$h$ is bounded so for some constant $C$
$$| H_{\tau_k}-H_{\tau_{k-1}}|\leq C (\tau_{k}-\tau_{k-1}). $$
Also since $h$ is bounded,
$\DS \frac{|H_{\tau_\brk}-H_N|}{N}\leq C\left|\frac{\tau_\brk}{N}-1\right|$ and so
\eqref{LLN-TTau} and \eqref{TauT} give
\begin{equation}
\label{HOvershoot}
\Prob\left(\frac{|H_{\tau_\brk}-H_N|}{N}> C \eps\right)<\eps.
\end{equation}
Since $\eps$ is arbitrary, \eqref{GH} and \eqref{HOvershoot} prove the lemma.
\end{proof}

\begin{remark}
We note that
the information on $X_{\tau_k}$ obtained in the proof of Lemma~\ref{LmEnvLLN},
especially \eqref{GH}, will play a crucial
role in the sequel. In particular, it will be used in Section \ref{ScCLT} to show that,
under appropriate assumptions, $X_{\tau_k}/L_N$ is well approximated by the simple
random walk. Passing to the limit as $N\to\infty,$ $\delta_N\to 0$ we shall obtain the CLT for $X(t).$
\end{remark}

\section{The Central Limit Theorem}
\label{ScCLT}
\subsection{Sufficient conditions for the CLT}
In this section, with a slight abuse of notation,
we write $\xi_{Nt}$,  $X_{Nt}$, and $Y_{Nt}$ for $\xi_{\lfloor Nt\rfloor},$
$X_{\lfloor Nt\rfloor}$, $Y_{\lfloor Nt\rfloor}$ respectively.

Denote $W_N(t)=\frac{X_{Nt}}{\sqrt{N}},$ where $t\in [0,\,1]$. Let $\fq_n$ be a column vector with components
\[
\begin{aligned}
\fq_n(i)&=\EXP\left((\fm(\xi_{k+1})-\fm(\xi_{k}))^2\big|\xi_k=(n, i)\right)\\
&=\sum_{j'\in\{-1,0,1\}, 1\le i'\le m}\mathfrak{P}\left((n,i),(n+j',i')\right)
\left(\fm_{n+j'}(i')-\fm_{n}(i)\right)^2,
\end{aligned}
\]
where $\mathfrak{P}(\cdot,\cdot)$
are the transition probabilities \eqref{striptransition} and $\fm_n(i)$, $\fm(\xi_{k})$, etc are
as in \eqref{MartEq} and Remark \ref{notations}.
\begin{theorem}
\label{ThCLTWalk}
If \eqref{AvOcc} holds and there is a constant $b$ such that
\begin{equation}
\label{AvQV}
\lim_{N\to\pm\infty} \frac{1}{|N|} \sum_{n=0}^{N-1}\rho_n \fq_n  =b
\end{equation}
then
$W_N(t)$ converges in law as $N\to\infty$ to $\cW(t)$-the Brownian Motion with zero mean and variance $Dt$,
where $D=\frac{b}{a}$ with $b$ as in \eqref{AvQV} and $a$ as in \eqref{AvOcc}.
\end{theorem}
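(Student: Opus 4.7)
The plan is to prove the theorem by a direct application of the martingale functional CLT. The key martingale is $M(t) := \fm(\xi(t))$, which is a martingale thanks to \eqref{MartEq} and whose jumps are bounded by the constant $K$ of \eqref{BndInc}. Its predictable quadratic variation equals
\[
\langle M\rangle_t \;=\; \sum_{s=0}^{t-1} \fq(\xi(s)),
\]
where $\fq$ is exactly the observable featured in the statement of the theorem.

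The first step is to verify convergence of $\langle M\rangle_{Nt}/N$. Since \eqref{AvOcc} holds by hypothesis and assumption \eqref{AvQV} is precisely hypothesis \eqref{AvH} of Lemma \ref{LmEnvLLN} with $h=\fq$ and $\fh=b$, that lemma yields $\langle M\rangle_N/N \to b/a = D$ in probability. Running the same argument up to time $\lfloor Nt\rfloor$ gives the pointwise convergence $\langle M\rangle_{Nt}/N \to Dt$; monotonicity of $\langle M\rangle$ in $t$ combined with continuity of the limit $Dt$ promotes this to uniform convergence on $[0,1]$ in probability. Since the jumps of $M$ are uniformly bounded, so that the rescaled jumps of $M(N\cdot)/\sqrt{N}$ are $O(1/\sqrt{N})$ and the Lindeberg/null-array condition is trivially met, a standard functional martingale CLT (in Helland's or Rebolledo's form) gives weak convergence of $M(N\cdot)/\sqrt{N}$ in $D[0,1]$ to the Brownian motion $\cW(\cdot)$ with variance $Dt$.

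It remains to transfer this FCLT from $M$ to the walker position $X$. By \eqref{XMNorm2}, for every $\eta>0$ there exists $C_\eta$ such that $|\fm_n(y)-n| \le \eta |n| + C_\eta$ for all $n$ and all $y\in\{1,\dots,m\}$; hence $|M(t)-X(t)| \le \eta |X(t)| + C_\eta$ at every time $t$. Applying this inequality with $\eta=1/2$, the tightness of $\sup_{t\in[0,1]}|M(Nt)|/\sqrt{N}$ from the previous step first yields tightness of $\sup_{t\in[0,1]}|X(Nt)|/\sqrt{N}$; then, letting $\eta\to 0$ following $N\to\infty$, one obtains $\sup_{t\in[0,1]}|M(Nt)-X(Nt)|/\sqrt{N}\to 0$ in probability. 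Slutsky's theorem in $D[0,1]$ then identifies the limit of $W_N(t)=X_{Nt}/\sqrt{N}$ as the same Brownian motion $\cW(t)$. All substantive work is packed into Lemma \ref{LmEnvLLN} and the Green function asymptotics of Section~\ref{ScGreen} that underpin it; what remains here is routine martingale machinery, the only minor subtlety being the need to upgrade the quadratic variation convergence to be uniform in $t\in[0,1]$ rather than just pointwise.
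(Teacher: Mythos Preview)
Your proposal is correct and follows essentially the same route as the paper: apply a martingale functional CLT to $M(t)=\fm(\xi(t))$, verify the quadratic-variation condition via Lemma~\ref{LmEnvLLN} with $h=\fq$, and then pass from $\fm(\xi)$ to $X$ using \eqref{XMNorm2}. The paper's proof is more terse (it simply cites \cite[Theorem~3]{Bn} for the martingale step and invokes \eqref{XMNorm2} without spelling out the tightness argument), but your added detail on uniform convergence of $\langle M\rangle_{Nt}/N$ and on the transfer to $X$ is sound and fills in what the paper leaves implicit.
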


\begin{proof}
In view of \eqref{XMNorm2} it suffices to show that
\begin{equation}
\label{MartToBM}
\hW_N(t) \Rightarrow \cW
\end{equation}
where
\begin{equation}
\label{DefhW}
 \hW_N(t)=\frac{\fm({\xi_{Nt}})}{\sqrt{N}}.
\end{equation}
Let $\fQ_N=\sum_{n=0}^{N-1} \fq(\xi_n),$ where $\fq(\xi_n)=\fq_{X_n}(Y_n)$. By \cite[Theorem 3]{Bn} to prove \eqref{MartToBM} it suffices to check that
\begin{equation}
\label{LinQVar}
\frac{\fQ_N}{N}\Rightarrow D,
\end{equation}
but this follows from \eqref{AvQV} and Lemma \ref{LmEnvLLN}.
\end{proof}

\begin{corollary}
For uniquely ergodic environments with bounded potential the Central Limit Theorem holds
for {\bf all} $\omega.$
\end{corollary}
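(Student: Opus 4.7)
The plan is to reduce the corollary to Theorem~\ref{ThCLTWalk} by verifying its hypotheses \eqref{AvOcc} and \eqref{AvQV} for \emph{every} $\omega$. The central idea is the standard principle that for a uniquely ergodic continuous action on a compact metric space, Birkhoff sums of any continuous observable converge to the space average \emph{uniformly in the base point}, not just almost surely. So I would look for continuous functions $f,g:\Omega\to\reals$ such that $\rho_n(\omega)\one = f(T^n\omega)$ and $\rho_n(\omega)\fq_n(\omega) = g(T^n\omega)$, and then invoke unique ergodicity. Under \eqref{EqC2*} and \eqref{BPNorm} the matrix entries $(P_n,Q_n,R_n)(\omega)$ lie in a fixed compact set, so the dynamically relevant part of $\Omega$ is indeed compact in the $\bd$-topology.

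First I would establish continuity of all the objects entering the construction. The matrices $\zeta_n$ and $\zeta_n^-$ are continuous on $\Omega$ because of \eqref{prodZeta} (and its symmetric counterpart), whose exponential rate depends only on $\breps$; hence $A_n,\alpha_n,A_n^-,\alpha_n^-$ are continuous as rational functions of continuous data. By \eqref{v_n1} the Perron directions $v_n,l_n$ are continuous, and the bounded-potential estimate \eqref{BPNorm} together with \eqref{EqLambda}--\eqref{EqPropl_n} makes the bounded solution $u_n$ of $u_n=A_n u_{n-1}$ (unique up to scale, as proved in the lemma preceding \eqref{LocGrowth}) continuous and uniformly bounded. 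Lemma~\ref{LmCurrent} then relates $u_n$ to $\fm_n$, and the construction of $\fm$ in \cite[Section~7]{DG4} gives a continuous $\fm_n$ once its slope is fixed by \eqref{XMNorm}. The invariant density $\rho_n$ is built from $l_n$ and the $\alpha_k$ matrices, and then rescaled through \eqref{RhoMNorm}. Both normalization constants are themselves ergodic averages of continuous functions, which by unique ergodicity are well-defined continuous quantities on $\Omega$; this allows us to pin down $\fm$ and $\rho$ consistently.

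With continuity of $\omega\mapsto\rho_0(\omega)\one$ and $\omega\mapsto\rho_0(\omega)\fq_0(\omega)$ in hand, unique ergodicity yields for every $\omega\in\Omega$
\[
\frac{1}{N}\sum_{n=0}^{N-1}\rho_n(\omega)\one\;\longrightarrow\;\int_\Omega\rho_0\one\,d\mathrm{P}=:a,
\qquad
\frac{1}{N}\sum_{n=0}^{N-1}\rho_n(\omega)\fq_n(\omega)\;\longrightarrow\;\int_\Omega\rho_0\fq_0\,d\mathrm{P}=:b,
\]
with uniform convergence in $\omega$. The analogue for $N\to-\infty$ follows by applying the same argument to the homeomorphism $T^{-1}$. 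This verifies \eqref{AvOcc} and \eqref{AvQV} for every $\omega$, and Theorem~\ref{ThCLTWalk} delivers the CLT.

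The main obstacle I expect is the continuous construction of $\fm_n$: the normalization \eqref{XMNorm} is an asymptotic condition, not a local one, so a priori it need not depend continuously on $\omega$. This is resolved by the uniform exponential rates in \eqref{prodZeta} and \eqref{v_n1} together with \eqref{BPNorm}: all the defining infinite limits for $\zeta,v,l,u$ are approximated within accuracy $\delta$ by truncations of length $O(\log\tfrac{1}{\delta})$ uniformly in $\omega$, so continuity propagates through the construction, and the asymptotic slope in \eqref{XMNorm} becomes the space average of a continuous function, which is constant on $\Omega$ by unique ergodicity.
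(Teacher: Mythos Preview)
Your proposal is correct and follows essentially the same route as the paper: reduce to Theorem~\ref{ThCLTWalk} by checking \eqref{AvOcc} and \eqref{AvQV} for every $\omega$, using that $\rho_n=\brho(T^n\omega)$ and $\fq_n=\mathfrak{q}(T^n\omega)$ with $\brho,\mathfrak{q}$ continuous, and then invoking uniform convergence of ergodic averages in uniquely ergodic systems. The paper states the corollary without a separate proof, deferring the continuity of $\brho$ (and the other auxiliary functions $\zeta,v,l,\beta,\tbeta,\sigma$) to \cite{DG4} and Appendix~\ref{AppReg}; your sketch of why these objects are continuous under \eqref{EqC2*} and \eqref{BPNorm} is accurate and matches those references, and your handling of the normalizations \eqref{XMNorm}, \eqref{RhoMNorm} via unique ergodicity is the right way to close the loop.
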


In \cite{DG4}, the Central Limit Theorem was proved for almost all $\omega$
for a wide class of environments which includes the uniquely ergodic ones as a particular case.
Here, for uniquely ergodic environments, we prove that this result holds for all (rather than almost all)
$\omega.$

\subsection{Expectation of the local time.}
Here we discuss the distribution of the local time of the walk.
Let $V((k, y), N)$ be the number of visits to the site $(k,y)$ by our walk before time $N.$

\begin{lemma}
  \label{LmUILT}
Under the assumptions of Section \ref{ScGreen}
for each $K>0$ the collection of random variables
$$\left\{\frac{V((k, y), N)|\xi(0)=z)}{\sqrt{N}}\right\}$$
is uniformly integrable where the uniformity is
with respect to $N\in \naturals,$
$z\in \bbS,$ and $(k, y)\in \bbS$ such that $|k|\leq K\sqrt{N}.$
\end{lemma}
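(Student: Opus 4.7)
My plan is to establish uniform $L^{2}$-boundedness of the family $V((k,y),N)/\sqrt N$; this implies the claimed uniform integrability.

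First I would reduce to the case $z=(k,y)$. If $T$ denotes the first hitting time of $(k,y)$ by the walk started from $z$, then by the strong Markov property $V((k,y),N)$ started at $z$ equals $V((k,y),N-T)$ started at $(k,y)$ on $\{T\le N\}$ and vanishes on $\{T>N\}$. Since $N\mapsto V((k,y),N)$ is monotone, this yields the stochastic dominance
\[
V((k,y),N\mid\xi(0)=z)\le_{\mathrm{st}} V((k,y),N\mid\xi(0)=(k,y)),
\]
so it suffices to prove $\EXP[V((k,y),N)^{2}]\le CN$ uniformly in $N$ and in $(k,y)$ with $|k|\le K\sqrt N$, with the walk started at $(k,y)$.

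Next, for integers $j\ge 1$ and a fixed $A\ge 1$, introduce the confinement events
\[
E_{j}=\Bigl\{\max_{0\le t\le N}|X_{t}-k|\le jA\sqrt N\Bigr\}.
\]
By \eqref{BndInc} the process $\fm(\xi_{t})$ is a martingale with uniformly bounded increments, and by \eqref{LocGrowth} one has $|X_{t}-k|\le C_{1}|\fm(\xi_{t})-\fm(\xi_{0})|+C_{1}C_{2}$. Martingale moderate deviations (Azuma combined with Doob's maximal inequality) therefore yield
\[
\Prob(E_{j}^{c})\le Ce^{-cj^{2}A^{2}}
\]
for constants $C,c>0$ independent of $N,k,y,j$. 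On $E_{j}$ the walk never leaves the sub-strip $\tI_{j}=[k-jA\sqrt N,\,k+jA\sqrt N]\times\{1,\dots,m\}$ up to time $N$, so $V((k,y),N)\le W_{j}$ where $W_{j}$ counts visits to $(k,y)$ before the walk first exits $\tI_{j}$.

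To bound moments of $W_{j}$, observe that by the strong Markov property at $(k,y)$, $W_{j}$ is geometrically distributed with success probability $p_{j}=1/G_{\tI_{j}}((k,y),(k,y))$. Lemma~\ref{LmGF} applied with $a=k-jA\sqrt N,\ b=k+jA\sqrt N$ gives
\[
1/p_{j}=\cG(\cM_{k},\cM_{k},\cM_{a},\cM_{b})\,\rho_{k}(y)+\cO(1),
\]
which, using $\cM_{n}\asymp n$ from \eqref{XMNorm2} and the upper boundedness of $\rho$ that follows from the bounded-potential assumption \eqref{BPNorm}, is of order $jA\sqrt N$ uniformly in the parameters. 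Hence $\EXP[W_{j}^{q}]\le C_{q}(jA\sqrt N)^{q}$ for every $q\ge 1$. Writing $\EXP[V^{2}]=\sum_{j\ge 1}\EXP[V^{2}\mathbf 1_{E_{j}\setminus E_{j-1}}]$ with $E_{0}=\emptyset$ and using $V\mathbf 1_{E_{j}}\le W_{j}$ together with Cauchy--Schwarz,
\[
\EXP[V^{2}\mathbf 1_{E_{j}\setminus E_{j-1}}]\le \sqrt{\EXP[W_{j}^{4}]\,\Prob(E_{j-1}^{c})}\le C(jA\sqrt N)^{2}\,e^{-c(j-1)^{2}A^{2}/2}.
\]
Summing in $j$ (the series converges for any fixed $A\ge 1$) yields $\EXP[V^{2}]\le C'N$ uniformly in $N$, $z$ and in $(k,y)$ with $|k|\le K\sqrt N$.

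The main obstacle is the correct matching between the polynomial-in-$j$ growth of the moments of $W_{j}$ dictated by Lemma~\ref{LmGF} and the Gaussian-in-$j^{2}$ decay of $\Prob(E_{j-1}^{c})$; the latter beats the former precisely because $\fm(\xi_{t})$ is a martingale with uniformly bounded increments by \eqref{BndInc}, which is the input that makes the confinement probability decay so rapidly on each successive annulus.
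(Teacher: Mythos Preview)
Your proof is correct and in fact yields the stronger conclusion of a uniform $L^2$ bound, but the route is genuinely different from the paper's. The paper argues by iterated stochastic domination: it first shows that with probability at least $1-\hat p$ (for some fixed $\hat p<1$) the walk started at $(k,y)$ does not exit $[k-\sqrt N,\,k+\sqrt N]$ before time $N$, and deduces that $V((k,y),N)$ is stochastically dominated by $\sum_{r=1}^{\hat G}\eta_r$, where $\hat G$ is geometric with parameter $1-\hat p$ and the $\eta_r$ are i.i.d.\ copies of the number $\eta$ of visits to $(k,y)$ before exiting $[k-\sqrt N,\,k+\sqrt N]$; uniform integrability of $\eta/\sqrt N$ then follows directly from \eqref{LE-Lower}. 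Your annular decomposition, combined with the Azuma--Doob confinement estimate and the Green function asymptotics of Lemma~\ref{LmGF}, is more hands-on and gives an explicit moment bound; the paper's argument, by contrast, avoids the fourth-moment and Cauchy--Schwarz step and isolates the single scale $\sqrt N$ as the essential input. One minor point: the uniform two-sided comparison $\cM_b-\cM_k\asymp jA\sqrt N$ that you need is really a consequence of \eqref{LocGrowth} rather than of the limit statement \eqref{XMNorm2}.
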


In the proof we will use the following notion. Let $\cX$ and $\cY$ be non-negative random variables.
We say that $\cY$ {\em stochastically dominates } $\cX$ if for each $t>0$
$ \Prob(\cX\geq t)\leq \Prob(\cY\geq t). $ Clearly if $\cY$ stochastically dominates $\cX$ then
$\EXP(\cY)\geq\EXP(\cX).$

\begin{proof}[Proof of Lemma \ref{LmUILT}]
It suffices to prove the result for the walk starting from $z=(k, y)$ since the local time does not accumulate
before the first visit to the site $(k, y).$

By \eqref{AvEscape2} and the maximal inequality for martingales, there is a constant $\hp<1$ such that
for each $K$ there exists $N_0(K)$ such that if $N\geq N_0(K)$ then for any
$(k', y')\in \bbS$ with $|k'|\leq (K+1)\sqrt{N}$, the probability that the random walk exits the segment
$[k'-\sqrt{N}, k'+\sqrt{N}]$ before time $N$ is less than $\hp$
(In fact, $\hp$ can be any number which is greater than the probability that the Brownian motion with
zero mean and with variance $Dt$ exits the interval $[-1, 1]$ before time 1).

Let $\eta$ be the total number of visits to $(k, y)$ before the walk exits from the segment
$(k-\sqrt{N}, k+\sqrt{N}).$ By the foregoing discussion,
the probability that $V((k, y), N)\leq \eta$ is greater than $1-\hp.$
Therefore, for large $N,$
$V((k, y), N)$ is stochastically dominated by $\eta+\hp V((k, y), N).$
Iterating this estimate we conclude that $V((k, y), N)$ is stochastically dominated by
$\DS \cV:=\sum_{r=1}^\hG \eta_r$ where $\hG$ is has geometric distribution with parameter
$1-\hp$ and $\eta_r$ are i.i.d random variables independent of $\hG$ and having the
same distribution as $\eta.$
Since $\DS \EXP(\cV)=\frac{\bE(\eta)}{1-\hp}$ it suffices to show the uniform integrability of $\eta/\sqrt{N}$
(with respect to time and the initial position of the walk).
However the fact that $\{\eta/\sqrt{N}\}$ is uniformly integrable follows from \eqref{LE-Lower}
\end{proof}

Let $\fl_{x,t}$ denote the local time of the standard Brownian motion.
\begin{theorem}
\label{ThConvLT}
 Suppose that \eqref{AvOcc} and \eqref{AvQV} hold. Let $(k_N, y_N)\in \bbS$ be a sequences such that
 $\frac{k_N}{\sqrt{N}}\to x$ as $N\to\infty$. Then, as $N\to\infty$
$$ \frac{V((k_N, y_N), N)}{\rho_{k_N}(y_N) \sqrt{N}}\Rightarrow \fl_{x, 1/a}. $$
\end{theorem}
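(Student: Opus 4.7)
The plan is to decompose $V((k_N,y_N),N)$ into contributions from the excursions defined by the stopping times $\tau_k$ of Section~\ref{ScEnvLLN}. Writing $V_k$ for the number of visits to $(k_N,y_N)$ during $[\tau_{k-1},\tau_k)$ and taking $\hat k=\lfloor 1/(a\delta^2)\rfloor$ (so that $\tau_{\hat k}\approx N$), I would first apply Lemma~\ref{LmGF} inside the block $[X_{\tau_{k-1}}-L_N,X_{\tau_{k-1}}+L_N]$ to obtain
\[
\EXP[V_k\mid\xi_{\tau_{k-1}}]=\cG(\cM_{X_{\tau_{k-1}}},\cM_{k_N},\cM_{a_k},\cM_{b_k})\rho_{k_N}(y_N)+\cO(1),
\]
where $a_k=X_{\tau_{k-1}}-L_N$, $b_k=X_{\tau_{k-1}}+L_N$. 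Using the asymptotic linearity \eqref{XMNorm2} of $\fm$, the prefactor reduces (up to an absolute constant) to a tent function $(L_N-|X_{\tau_{k-1}}-k_N|)_{+}$ in the variable $X_{\tau_{k-1}}$, supported on an interval of length $2L_N$ around $k_N$.

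Summing across blocks yields, schematically,
\[
\frac{V((k_N,y_N),N)}{\rho_{k_N}(y_N)\sqrt N}\;\approx\;\delta\sum_{k=1}^{\hat k}\left(1-\frac{|X_{\tau_{k-1}}-k_N|}{L_N}\right)_{\!+},
\]
and next I would identify the limit of the right side as follows. The proof of Lemma~\ref{LmEnvLLN} (optional stopping on the martingale $\fm$) shows that $X_{\tau_k}/L_N$ is well approximated by a simple random walk, so $X_{\tau_k}/\sqrt N=\delta\cdot(X_{\tau_k}/L_N)$ converges in distribution to the limit BM $\cW$ of Theorem~\ref{ThCLTWalk}, whose variance $D$ must equal $1/a$ (to match $\mathrm{Var}(X_{\tau_{\hat k}})\sim\hat k L_N^2=N/a$). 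Consequently, the empirical occupation measure $\hat k^{-1}\sum_{k}\delta_{X_{\tau_{k-1}}/\sqrt N}$ converges to the occupation measure of $\cW$ on $[0,1]$, whose density is the spatial local time of $\cW$. Sending $\delta\to 0$ and using continuity of this local time in the space variable to localize the tent at $x$, the displayed sum tends to $\hat k\delta\cdot$(local time of $\cW$ at $x$ on $[0,1]$)$=(1/a)\cdot$(local time of $\cW$ at $x$), which via the scaling $\cW\stackrel{d}{=}(1/\sqrt a)B$ identifies with $\fl_{x,1/a}$.

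The rigorous proof would rest on three ingredients: uniform integrability of $V((k_N,y_N),N)/\sqrt N$, supplied by Lemma~\ref{LmUILT}, which upgrades mean convergence to convergence in distribution; a variance estimate on $\sum_k V_k$ obtained by a second application of Lemma~\ref{LmGF} to products of indicators, ensuring concentration of the block sum about its conditional mean; and joint functional convergence of the coarse walk with its local occupation measure near $x$, together with Hölder continuity of the limiting local time in the spatial variable.

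The main obstacle will be balancing two competing error sources in the double limit $N\to\infty$ followed by $\delta\to 0$. The $\cO(1)$ error in Lemma~\ref{LmGF} accumulates as $\cO(\delta^{-2})$ over the $\hat k\sim\delta^{-2}$ blocks, so one must take $\delta=\delta_N\to 0$ with $\delta_N^2\sqrt N\to\infty$ in order for this error to be negligible after rescaling by $\sqrt N$. On the other hand, localizing the tent to the point $x$ via Hölder continuity of BM local time requires $\delta_N\to 0$ at a controlled rate, and making the empirical-to-Brownian occupation comparison uniform in the (shrinking) tent width is the technically delicate point of the argument.
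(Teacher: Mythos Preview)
Your plan is workable in outline but the paper's proof is notably simpler by exploiting a structure you miss. The paper first treats $k_N\equiv 0$. Since the coarse walk $X_{\tau_j}$ takes values in $L_N\integers$, the site $(0,y_N)$ can be visited during $[\tau_{j-1},\tau_j)$ \emph{only} when $X_{\tau_{j-1}}=0$; your tent collapses to an indicator. Conditioned on $X_{\tau_{j-1}}=0$ and on $U_j\neq 0$ (which has probability $1-O(1/L_N)$ by \eqref{LE-Lower}), the block contribution $U_j$ is \emph{geometric} with mean $G_{-L_N,L_N}((0,y_N);(0,y_N))\sim L_N\,\rho(0,y_N)$ and, crucially, is conditionally independent of $\xi(\tau_j)$. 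Hence $V((0,y_N),\tau_{\hat k})\approx L_N\,\rho(0,y_N)\,\fn(\hat k)$ where $\fn(\hat k)=\Card\{j<\hat k:X_{\tau_j}=0\}$, and the concentration you worry about comes essentially for free from this i.i.d.-geometric structure via Proposition~\ref{PrLLNMart}. The paper then cites Borodin~\cite{Bor} for the convergence $\delta\,\fn(\hat k)\Rightarrow\fl_{0,1/a}$ of the rescaled simple-random-walk local time. The edge correction between $V(\cdot,\tau_{\hat k})$ and $V(\cdot,N)$ is handled by \eqref{NumRen} together with Lemma~\ref{LmUILT}. For general $k_N$ the paper does \emph{not} attack the tent sum directly: it conditions on the first time $\ft_{k_N}$ the walk reaches $\bbL_{k_N}$, applies the $k_N\equiv 0$ case restarted from there with residual time $N-\ft_{k_N}$, and uses Theorem~\ref{ThCLTWalk} to identify the law of $\ft_{k_N}/N$ with the Brownian first-passage time $\cT_x$, assembling the answer as $\int_0^1 f_{\cT_x}(t)\,\bP(\fl_{0,(1-t)/a}\geq s)\,dt=\bP(\fl_{x,1/a}\geq s)$.

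The concentration step is the genuine gap in your plan. Your ``second application of Lemma~\ref{LmGF} to products of indicators'' does not give what is needed: that lemma computes first moments of occupation, not second moments, so controlling $\Var(V_k\mid\xi_{\tau_{k-1}})$ would require a separate estimate on the second moment of the stopped local time. The paper sidesteps this entirely via the geometric-plus-independence structure above. Separately, your limit identification via ``the empirical occupation measure converges to the Brownian occupation measure, whose density is the local time'' is not sufficient as stated: weak convergence of occupation measures does not imply convergence of their densities at a point. What is actually needed---and what the paper invokes directly---is precisely the local-time convergence of~\cite{Bor} for the coarse walk; once you grant that, the paper's route through the indicator $\{X_{\tau_{j-1}}=0\}$ is shorter than your tent-function detour.
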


Combining Theorem \ref{ThConvLT} with Lemma \ref{LmUILT} we obtain

\begin{corollary}
\label{CrExpVis}
Suppose that $(k_N,y_N)$ is a sequence of points in $\bbS$ such that
$\frac{k_N}{\sqrt{N}}\to x.$ Then uniformly for $x$
in a compact set we have
\begin{equation}
\label{ConvLT-Cl}
 \lim_{N\to\infty} \EXP\left(\frac{V((k_N, y_N), N)}{\rho_{k_N} (y_N)\; \sqrt{N}}\right)
=\bE(\fl_{x, 1/a}).
\end{equation}
\end{corollary}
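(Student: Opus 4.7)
The plan is to derive the corollary by combining the weak convergence provided by Theorem \ref{ThConvLT} with the uniform integrability supplied by Lemma \ref{LmUILT}, using the standard fact that weak convergence plus uniform integrability yields convergence of expectations.

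First, I would observe that $\rho_{k_N}(y_N)$ is bounded both above and away from zero uniformly in $(k_N, y_N)\in \bbS$. Boundedness from above is part of the definition of the solution $\rho$ to \eqref{RhoEq} in property (II). Boundedness from below follows from the relation $\rho_n = \rho_{n+1}\alpha_n$ together with the lower bounds on the entries of $\alpha_n$ (an analogue of \eqref{EqPositive} which is a consequence of the ellipticity condition \eqref{EqC2*}), combined with the fact that $\rho_n\one$ cannot be arbitrarily small since \eqref{AvOcc} forces the Cesaro mean of $\rho_n\one$ to equal $a>0$ while \eqref{BPNorm}-type estimates prevent local crashes. Consequently, the family
\[
\left\{\frac{V((k_N,y_N),N)}{\rho_{k_N}(y_N)\sqrt{N}}\right\}
\]
is uniformly integrable: it is obtained from the family in Lemma \ref{LmUILT} by multiplying each element by a positive constant which is bounded above and below uniformly.

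Second, Theorem \ref{ThConvLT} yields convergence in distribution of the above quantity to $\fl_{x,1/a}$ along any sequence $k_N/\sqrt{N}\to x$. Together with uniform integrability this gives convergence of expectations to $\bE(\fl_{x,1/a})$ along each such sequence, establishing \eqref{ConvLT-Cl} for fixed $x$.

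Finally, to upgrade this to uniformity on compact sets, I would argue by contradiction. Suppose there exist $\eps>0$, a compact set $K\subset\reals$, and sequences $N_j\to\infty$ with $x_{N_j} := k_{N_j}/\sqrt{N_j}\in K$ and $y_{N_j}\in\{1,\dots,m\}$, along which
\[
\left|\EXP\left(\frac{V((k_{N_j},y_{N_j}),N_j)}{\rho_{k_{N_j}}(y_{N_j})\sqrt{N_j}}\right)-\bE(\fl_{x_{N_j},1/a})\right|\geq\eps.
\]
By compactness of $K$ and finiteness of $\{1,\dots,m\}$, pass to a subsequence so that $x_{N_j}\to x^*\in K$ and $y_{N_j}\equiv y^*$ is constant. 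Applying Theorem \ref{ThConvLT} and the uniform integrability along this subsequence, the expectation converges to $\bE(\fl_{x^*,1/a})$; by the standard continuity of $x\mapsto \bE(\fl_{x,1/a})$ (the density of the Gaussian occupation formula), this contradicts the assumed lower bound. The main obstacle in executing this plan is confirming the uniform lower bound on $\rho_{k_N}(y_N)$; everything else is a routine weak-convergence argument.
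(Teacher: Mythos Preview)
Your approach is correct and is exactly the one the paper takes: the paper's entire proof is the single sentence ``Combining Theorem \ref{ThConvLT} with Lemma \ref{LmUILT} we obtain [the corollary],'' i.e.\ weak convergence plus uniform integrability gives convergence of means. Your additional remarks on the uniform lower bound for $\rho_{k_N}(y_N)$ (needed to pass from uniform integrability of $V/\sqrt{N}$ to that of $V/(\rho\sqrt{N})$) and the compactness argument for uniformity in $x$ simply fill in details the paper leaves implicit; the lower bound does hold, e.g.\ from \eqref{RhoMNorm} together with the boundedness of $P_n(\fm_{n+1}-\zeta_{n+1}^-\fm_n)$, or equivalently from the representation $\rho_n=c\,l_n/\tbeta_n$ in \eqref{RhoDelta} and the boundedness of $\tbeta$ under a bounded potential.
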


\begin{proof}[Proof of the theorem]
  Consider first the case $k_N\equiv 0.$ We use the same notation as in the proof of Lemma \ref{LmEnvLLN}.
  In particular we let $L_N=\lfloor\delta\sqrt{N}\rfloor$ for a small constant $\delta.$

  Fix $\eps>0.$ We show that if $\delta$ is sufficiently small then for large $N$
  the following estimates hold:
\begin{equation}
\label{LTMeso}
\max_{u\in\reals} \left|\Prob\left(\frac{V((0, y_N), \tau_\hk)}{\sqrt{N}}\leq u\right)
-\bP\left(\rho(0, y_N) \, \fl_{0, 1/a}\leq u\right)\right| \leq \eps,
\end{equation}
where $\hk$ is defined by \eqref{DefHK}, $\tau_k$ is defined by \eqref{DefTauK}
and
\begin{equation}
\label{LTEdge}
\Prob\left(\frac{|V((0, y_N), N)-V((0, y_N), \tau_\hk)|}{\sqrt{N}}>\sqrt\eps\right)\leq C \sqrt{\eps}.
\end{equation}

To prove \eqref{LTEdge} we note that by \eqref{NumRen}, if $N$ is large enough, then
$\Prob(|\tau_\hk-N|\geq \eps N)\leq \eps.$
On the other hand if $|\tau_\hk-N|\leq \eps N$ then
$$ |V((0, y_N), N)-V((0, y_N), \tau_\hk)|\leq V((0, y_N), (1+\eps)N)-V((0, y_N), (1-\eps)N). $$
By Lemma \ref{LmUILT} the expectation of the RHS is less than $\brC\eps \sqrt{N}$ so
by the Markov inequality
\begin{equation}
\label{MInLT}
\Prob(V((0, y_N), (1+\eps)N)-V((0, y_N), (1-\eps)N)\geq \sqrt{\eps} \; \sqrt{N})\leq \brC\sqrt{\eps}.
\end{equation}
Combining \eqref{NumRen} and \eqref{MInLT} we obtain \eqref{LTEdge}.

To prove \eqref{LTMeso} let $U_j$ be the number of visits to $(0, y_N)$ during the time interval $[\tau_{j-1}, \tau_j].$
Note that $U_j=0$ unless $\xi(\tau_{j-1})\in \bbL_0.$
In case $\xi(\tau_{j-1})\in \bbL_0,$ \eqref{LE-Lower}  shows that
$$ \Prob\left(U_j=0\,|\,\xi(\tau_{j-1})\in \bbL_0\right)\leq \frac{C}{L_N}. $$
On the other hand, the general theory of Markov chains shows that, conditioned on $U_j\neq 0,$
$U_j$ has geometric distribution with the mean $G_{-L_N, L_N}((0, y_N); (0, y_N))$
and moreover it is independent of
$\xi(\tau_j).$ By Lemma \ref{LmGF}
$$G_{-L_N, L_N}((0, y_N); (0, y_N))=L_N \rho(0, y_N)(1+o_{N\to\infty} (1)). $$
Now it is easy to show using, for example, Proposition \ref{PrLLNMart}, that
$$ \Prob\left(\left|V((0, y_N), \tau_\hk)-L_N \fn(\hk) \rho(0, y_N)|\right)\geq \frac{\eps \sqrt{N}}{3}\right)\to 0 \text{ as } N\to \infty $$
where $\fn(k)=\Card(j<k: \xi(\tau_j)\in \bbL_0).$

Since the local time of the simple random walk converges after the diffusive rescaling to
a local time of the Brownian Motion (\cite{Bor}), we can take $\delta$ so small that
$$ \max_{u\in\reals} \left|\bP(\delta \tilde\fn(\hk)\leq u)-\bP(\fl_{0, 1/a}\leq u)\right|\leq \frac{\eps}{3} $$
where $\tilde\fn(\hk)$ is the number of times the simple symmetric random walk returns to 0 before time $\hk.$
On the other hand, \eqref{XMNorm} and the Optional Stopping Theorem for martingales show that
$\left\{\frac{\xi(\tau_j)}{L_N}\right\}_{j\in\naturals}$ converges as $N\to\infty$ to the simple random walk on $\integers.$
Hence for each $\delta$ we have
$$ \max_{u\in\reals} \left|\bP(\delta \tilde\fn(\hk)\leq u)-\bP(\delta\fn(\hk)\leq u)\right|\leq \frac{\eps}{3} $$
provided that $N$ is large enough. Combining the last three displays we obtain
\eqref{LTMeso}.

This completes the proof of the Theorem in the case $k_N\equiv 0.$ The same argument shows that
for each $k_N, y_N, y_N'$, if the walk starts from $(k_N, y_N')$ then
$ \frac{V((k_N, y_N), N)}{\sqrt{N} \rho(k_N, y_N)}$ converges to $\fl_{0, 1/a}.$  Let $\ft_k$
be the first time the walk reaches layer $\bbL_k.$ Divide
$[0,1]$ into intervals $I_j$ of small length $h$ and let $t_j$ be the center of $I_j.$
By Theorem \ref{ThCLTWalk}, the probability that
$\frac{\ft_{k_N}}{N}\in I_j$ converges as $N\to\infty$ to $\bP(\cT_x\in I_j)$ where $\cT_x$ is the first
time the standard Brownian Motion reaches $x.$
On the other hand
conditioned on $\frac{\ft_N}{N}\in I_j$ we have that the distribution of
$ \frac{V((k_N, y_N), N)}{\sqrt{N}}$ is close to the distribution of $\fl_{0, (1-t_j)/a}$
(the closeness means that the error goes to $0$ when $h\to 0$ and $N\to\infty$).
Therefore for each  $s$
$$ \lim_{N\to\infty} \Prob\left(\frac{V((k_n, y_N), N)}{\sqrt{N} \rho(k_N, y_N)}\geq s\right)
=\int_0^1 f_\cT(t) \bP(\fl_{0, (1-t)/a}\geq s) dt $$
where $f_{\cT_x}$ is the density of $\cT_x.$
The last integral is equal to
$\bP(\fl_{x, 1/a}\geq s)$ completing the proof of Theorem \ref{ThConvLT}.
\end{proof}

\subsection{Rate of convergence.}
\label{SSRates}
Here we  estimate the rate of convergence in Theorem \ref{ThCLTWalk}
assuming that we have a good control of
error rates in \eqref{XMNorm}, \eqref{AvOcc}, and \eqref{AvQV}.

Let $\Phi(x)$ denote the distribution function of a standard normal random variable.

We will use the following two results.
\begin{proposition} $($\cite[Theorem 3.7]{HH}$)$
\label{Mart-BE}
Given constants $C_1, C_2, C_3$ there is a constant $C_4$ such that the following holds.
Let $Z_n$ be a martingale difference sequence such that for $n\leq N$
\begin{equation}
\label{MartExpM}
|Z_n|\leq C_1
\end{equation}
and $\cQ_N=\sum_{n=1}^N \bE(Z_n^2|\cF_{n-1})$
satisfies
\begin{equation}
\label{OscQV}
\bP\left(\left|\cQ_N-N\right|\geq C_2 \sqrt{N} \ln^2 N\right)\leq \frac{C_3 \ln N}{N^{1/4}}
\end{equation}
Then 
$$ \sup_x \left|\bP\left(\frac{\sum_{n=1}^N Z_n}{\sqrt{N}}\leq x\right)-\Phi(x)\right|\leq
\frac{C_4 \ln N}{N^{1/4}} . $$
\end{proposition}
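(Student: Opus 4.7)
The plan is to apply the general martingale Berry--Esseen theorem of Hall and Heyde (\cite[Theorem 3.7]{HH}, which the proposition itself cites). That theorem reduces a Berry--Esseen bound for a martingale with unit total variance to controlling two quantities: (a) a Lyapunov-type sum of higher moments of the increments, and (b) a moment of the deviation of the normalized conditional variance $\cQ_N/N$ from $1$. The strategy is to verify that under the hypotheses \eqref{MartExpM} and \eqref{OscQV} both of these terms can be made small with effective rates, and that the slower of the two rates is $O(\ln N/N^{1/4})$.

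First, apply Hall--Heyde's bound to the normalized martingale differences $Z_n/\sqrt{N}$. For the Lyapunov term, the bound $|Z_n|\leq C_1$ from \eqref{MartExpM} gives $\sum_{n=1}^N \bE|Z_n/\sqrt{N}|^{2+2\delta}\leq C_1^{2\delta} N^{-\delta}\sum_{n=1}^N \bE(Z_n^2)/N \leq C_1^{2+2\delta}/N^{\delta}$ for any $\delta\in(0,1]$. Raised to the positive power appearing in Hall--Heyde's conclusion, this contribution is of size $O(N^{-c})$ for some $c>1/4$, which is dominated by the target $\ln N/N^{1/4}$.

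The main work is the conditional variance term, which amounts to estimating an $L^p$-type moment of $|\cQ_N/N-1|$ for the appropriate $p$ coming from Hall--Heyde. I would control this by splitting according to the event $\cG=\{|\cQ_N-N|< C_2\sqrt{N}\ln^2 N\}$ and its complement. On $\cG$ the pointwise bound $|\cQ_N/N-1|\leq C_2\ln^2 N/\sqrt{N}$ contributes a term of order $(\ln^2 N/\sqrt{N})^p$. On $\cG^c$ we use the deterministic bound $|\cQ_N/N-1|\leq 1+C_1^2$, which holds since $\cQ_N=\sum_{n=1}^N \bE(Z_n^2|\cF_{n-1})\leq C_1^2 N$ by \eqref{MartExpM}, together with $\bP(\cG^c)\leq C_3\ln N/N^{1/4}$ from \eqref{OscQV}. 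This yields a contribution of order $\ln N/N^{1/4}$, which dominates the contribution from $\cG$ and gives the required bound.

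The main obstacle is bookkeeping: matching the exponent $p$ and the power of the conditional-variance norm in the exact statement of Hall--Heyde Theorem 3.7 so that the split above yields precisely $\ln N/N^{1/4}$ rather than a slightly weaker polynomial rate. The form of hypothesis \eqref{OscQV}---with $\ln^2 N$ inside the probability and $\ln N$ outside---appears to have been tailored to make the two terms in the split balance, which strongly suggests that the sharp exponent $N^{-1/4}$ from Hall--Heyde under bounded fourth moments is the one to use. Once the exponents are reconciled, the remainder of the argument is a routine application of a known inequality and contains no further substantive difficulty.
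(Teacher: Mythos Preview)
The paper does not prove this proposition at all: it is stated with the citation \cite[Theorem 3.7]{HH} and then used as a black box in the proof of Theorem~\ref{ThCLTRate}. There is therefore no ``paper's own proof'' to compare against.

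Your proposal is not so much a proof as a plausibility sketch explaining why the stated form should follow from Hall--Heyde's general Berry--Esseen inequality, and in spirit it is exactly right: the Lyapunov term is harmless because of the uniform bound $|Z_n|\le C_1$, and the conditional-variance term is handled by the split on the event in \eqref{OscQV} together with the deterministic bound $\cQ_N\le C_1^2 N$. Where you should be more careful is the exponent bookkeeping you flag yourself. The raw Hall--Heyde bound with parameter $\delta$ gives a rate of order $N^{-\delta/(3+2\delta)}$, which for $\delta=1$ is $N^{-1/5}$, not $N^{-1/4}$; the $N^{-1/4}$ rate (up to logarithms) for bounded increments is closer to Bolthausen's sharpening \cite{B}, and indeed the precise logarithmic powers in \eqref{OscQV} look tailored to a more refined version of the inequality than the basic $\delta$-form. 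So if you want to turn your sketch into an actual proof, you will need either to invoke the sharper result directly or to carry out a more delicate optimization over $\delta$ than the one you indicate. As written, your argument establishes \emph{some} polynomial rate but does not pin down $\ln N/N^{1/4}$.
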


\begin{proposition}
\label{PrConv}

Let $S, Z$ be random variables and set
$$
 \delta=\sup_x |\bP(S\leq x)-\Phi(x)|, \quad \delta^*=\sup_x |\bP(S+Z\leq x)-\Phi(x)|.$$
Then:

\noindent
(a) There exists a constant $C$ (independent of $S$ and $Z$), such that
$$ \delta^*\leq 2\delta+C ||\sqrt{\bE(Z^2|S)}||_{\infty}, $$
\noindent
$(b)\ \ \delta^*\leq \delta+\bP(Z\neq 0).$
\end{proposition}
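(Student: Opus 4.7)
The plan is to reduce both parts to an estimate on $\sup_x|F_{S+Z}(x)-F_S(x)|$ and then combine with $\sup_x|F_S(x)-\Phi(x)|=\delta$ via the triangle inequality. The key observation is that the indicators $\mathbf{1}_{S\le x}$ and $\mathbf{1}_{S+Z\le x}$ can disagree only when $x$ lies strictly between $S$ and $S+Z$, so the symmetric difference of $\{S\le x\}$ and $\{S+Z\le x\}$ is contained in $\{|S-x|\le|Z|\}$. Consequently
\[|F_{S+Z}(x)-F_S(x)|\le \bP\bigl(|S-x|\le|Z|\bigr)=\bE\bigl[\bP(|Z|\ge|S-x|\,|\,S)\bigr].\]

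Part (b) follows at once from this: on the event $\{Z=0\}$ the two indicators coincide, so the right-hand side above is bounded by $\bP(Z\ne 0)$, and the triangle inequality yields $\delta^*\le\delta+\bP(Z\ne 0)$.

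For part (a), set $\nu=\|\sqrt{\bE(Z^2|S)}\|_\infty$ and apply conditional Chebyshev to get $\bP(|Z|\ge r\,|\,S)\le\min(1,\nu^2/r^2)$ for every $r>0$. I would then rewrite the unconditional expectation via the layer-cake identity
\[\bE\bigl[\min\bigl(1,\nu^2/(S-x)^2\bigr)\bigr]=\int_0^1\bP\bigl(|S-x|\le\nu/\sqrt{t}\bigr)\,dt,\]
and control the integrand using the anti-concentration estimate $\bP(|S-x|\le r)\le 2r/\sqrt{2\pi}+2\delta$, which is immediate from $|F_S-\Phi|\le\delta$ and the Lipschitz bound $\Phi'\le 1/\sqrt{2\pi}$. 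Since $\int_0^1 t^{-1/2}\,dt=2$, the integral evaluates to $(4/\sqrt{2\pi})\nu+2\delta$, and combining with the triangle inequality gives a bound of the claimed form $\delta^*\le C_1\delta+C_2\nu$.

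The main obstacle is extracting a linear-in-$\nu$ dependence rather than the weaker $\nu^{2/3}$ rate produced by the naive argument. Indeed, the crude splitting $\bP(|S-x|\le|Z|)\le\bP(|Z|>\eta)+\bP(|S-x|\le\eta)$ combined with conditional Chebyshev yields only $C\eta+C\nu^2/\eta^2$, whose optimum in $\eta$ is $O(\nu^{2/3})$. The sharper linear rate is recovered precisely by integrating the truncated conditional tail $\min(1,\nu^2/r^2)$ against the uniform anti-concentration of $S$, which is exactly what the distribution-function rewriting accomplishes.
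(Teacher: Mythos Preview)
Your proof is correct. For part (b) your argument is essentially the paper's one-line proof via the triangle inequality, with the containment $\{\mathbf{1}_{S\le x}\ne\mathbf{1}_{S+Z\le x}\}\subset\{Z\ne 0\}$ made explicit.

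For part (a) the paper gives no argument and simply cites \cite[Lemma~1]{B}, so your self-contained proof is a genuine addition. Your layer-cake argument is clean and gets the correct linear dependence on $\nu$; the only discrepancy is that your route yields $3\delta+(4/\sqrt{2\pi})\nu$ rather than the stated $2\delta+C\nu$, since the anti-concentration bound $\bP(|S-x|\le r)\le 2r/\sqrt{2\pi}+2\delta$ already costs $2\delta$ before the outer triangle inequality adds another $\delta$. You flag this by writing the conclusion as $C_1\delta+C_2\nu$. For every application in the paper (the proof of Theorem~\ref{ThCLTRate}) the $\delta$-term is dominated by the $\nu$-term anyway, so the constant in front of $\delta$ is immaterial; your version is fully adequate for the paper's purposes.
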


\begin{proof}
Part (a) is proven in \cite[Lemma 1]{B}. To prove part (b) it suffices to observe that by the triangle inequality
$ \displaystyle |\delta-\delta^*|\leq  \sup_x |\bP(S\leq x)-\bP(S+Z\leq x)|. $
\end{proof}

In this section, in order to bound the error rate in the CLT, we assume that there is $\beta_1<1$ such that for each $L\geq N^{0.01}$ and each
$|k|\leq N$
\begin{equation}
\label{RCMart}
\left|\fm_{k+L}(1)-\fm_{k-L}(1)-2L\right|\leq C L^{1-\beta_1},
\end{equation}
\begin{equation}
\label{RC-Occ-V}
\left|\sum_{j=k-L}^{k+L} \rho_j \one-2L a\right|\leq C L^{1-\beta_1},
\end{equation}

\begin{equation}
\label{RC-Occ-Q}
\left|\sum_{j=k-L}^{k+L} \rho_j \fq_j-2L b\right|\leq C L^{1-\beta_1}.
\end{equation}
Recall the notation of Section \ref{ScEnvLLN}.
Define $\tau_j$ as in  \eqref{DefTauK}
with $L_N=N^{1/4}.$
Note that \eqref{RC-Occ-V}, \eqref{RC-Occ-Q}  implies that
\begin{equation}
\label{RC-EXP-Occ-V}
\EXP(\tau_{k}-\tau_{k-1}|\xi_{\tau_{k-1}})=aL_N^2\left(1+\cO\left(L_N^{-\beta_1}\right)\right),
\end{equation}
\begin{equation}
\label{RC-EXP-Occ-Q}
\EXP\left(\sum_{n=\tau_{k-1}}^{\tau_k} \fq(\xi_n) |\xi_{\tau_{k-1}}\right)=
bL_N^2\left(1+\cO\left(L_N^{-\beta_1}\right)\right)
\end{equation}
provided that $|X(\tau_{k-1})|\leq N.$

To establish \eqref{RC-EXP-Occ-V}
we temporarily denote
$$ \xi=\xi_{\tau_{k-1}}=(x,y),\quad c=X(\tau_{k-1})-L_N, \quad d=X(\tau_{k-1})+L_N .$$
Then Lemma \ref{LmGF} gives
$$ \EXP(\tau_{k}-\tau_{k-1}|\xi_{\tau_{k-1}})=
\sum_{n=c}^{d}\tG_{c, d}(\xi; n). $$
In view of \eqref{GFLayer} and \eqref{RCMart}
$$\sum_{n=c}^{d}\tG_{c, d}(\xi; n)=\left[\sum_{n=c}^d \cG(x, n, c, d) \rho_n \one \right]
+\cO\left(L_N^{2-\beta_1}\right). $$
The main term equals to
$$ \sum_{n=c}^d \cG(x, n, c, d) \rho_n \one =
\sum_{n=c}^d \cG(x, n, c, d) a +\sum_{n=c}^d \cG(x, n, c, d) (\rho_n \one-a) $$
$$=
a L_N^2+\cO\left(L_N\right)+\sum_{n=c}^d \cG(x, n, c, d) (\rho_n \one-a). $$
To estimate the last term denote $\DS \cI_n=\sum_{k=x}^n (\rho_n \one-a).$
Summation by parts gives
$$ \sum_{n=c}^d \cG(x, n, c, d) (\rho_n \one-a)=-\sum_{n=c}^d \cI_n \nabla \cG(x, n, c, d)$$
where $\nabla$ is the difference operator, $\nabla H=H_n-H_{n-1}.$
The first term in the last sum is $\cO\left(L_N^{1-\beta_1}\right)$ and the second term is
bounded. Whence the last sum is $\cO\left(L_N^{2-\beta_1}\right)$ proving
\eqref{RC-EXP-Occ-V}.
The proof of  \eqref{RC-EXP-Occ-Q} is similar.

\begin{theorem}
\label{ThCLTRate}
If \eqref{RCMart}, \eqref{RC-Occ-V}  and \eqref{RC-Occ-Q}
hold then for each $\eps>0$ there is a constant $C=C_\eps$ such that
\begin{equation}
\label{CLTRateMain}
\sup_x \left|\Prob\left(\frac{X_N}{\sqrt{DN}}\leq x\right)-\Phi(x)\right|\leq C N^{-(\upsilon-\eps)}
\end{equation}
where
$$\upsilon= \frac{1}{2} \; \min\left(\frac{1}{4},\; \beta_1 \right).$$
\end{theorem}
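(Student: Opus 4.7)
The strategy is a three-step reduction using Propositions \ref{Mart-BE} and \ref{PrConv}. The form $\upsilon=\tfrac12\min(1/4,\beta_1)$ reflects two independent rate obstructions that will appear in the proof: the cost $N^{-\beta_1/2}$ of replacing the walker $X_N$ by the martingale $\fm(\xi_N)$, and the martingale Berry--Esseen rate $N^{-1/8}$ obtained by sampling at the mesoscopic scale $L_N=\lfloor N^{1/4}\rfloor$ over $\hk_N=\lfloor N/(aL_N^2)\rfloor\asymp N^{1/2}$ steps.

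First, I would reduce \eqref{CLTRateMain} for $X_N/\sqrt{DN}$ to the same estimate for $\fm(\xi_N)/\sqrt{DN}$. By \eqref{RCMart} together with \eqref{BndInc}, $|X_N-\fm(\xi_N)|\leq C|X_N|^{1-\beta_1}+C$. A preliminary tail estimate (the martingale maximal inequality applied to $\fm(\xi_t)$, combined with \eqref{XMNorm2}) gives $|X_N|\leq \sqrt{N}\ln N$ with probability $1-O((\ln N)^{-2})$, so on this event the normalized difference $Z=(X_N-\fm(\xi_N))/\sqrt{DN}$ satisfies $\|\sqrt{\EXP(Z^2|S)}\|_\infty=O(N^{-\beta_1/2}(\ln N)^{1-\beta_1})$. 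Proposition \ref{PrConv}(a) on the good event and \ref{PrConv}(b) on its complement then transfer the Berry--Esseen distance at total extra cost $O(N^{-\beta_1/2}\text{polylog})$.

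Second, I would apply Proposition \ref{Mart-BE} to the rescaled mesoscopic differences $Z_k=(\fm(\xi_{\tau_k})-\fm(\xi_{\tau_{k-1}}))/(L_N\sqrt{b})$ with the $\tau_k$ from \eqref{DefTauK}. Condition \eqref{MartExpM} is immediate from \eqref{LocGrowth}. For the quadratic variation condition \eqref{OscQV}, the martingale identity gives $\EXP(\tZ_k^2|\cF_{\tau_{k-1}})=\EXP(\sum_{n=\tau_{k-1}}^{\tau_k-1}\fq(\xi_n)|\cF_{\tau_{k-1}})$, so by \eqref{RC-EXP-Occ-Q} one has $\EXP(Z_k^2|\cF_{\tau_{k-1}})=1+\epsilon(X(\tau_{k-1}))$ with $|\epsilon|\leq CL_N^{-\beta_1}$. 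Proposition \ref{Mart-BE} then delivers rate $\ln\hk_N/\hk_N^{1/4}=O(N^{-1/8}\ln N)$ for $\fm(\xi_{\tau_{\hk_N}})/\sqrt{DN}$. The passage from $\fm(\xi_{\tau_{\hk_N}})$ back to $\fm(\xi_N)$ is handled by enlarging $\hk_N$ by a slow factor so $\tau_{\hk_N}\geq N$ with high probability (via \eqref{RC-EXP-Occ-V} and Proposition \ref{PrLLNMart}) and then using the martingale maximal inequality on the window $[N,\tau_{\hk_N}]$, which contributes $O(N^{-1/8}\text{polylog})$ and is subsumed by the Berry--Esseen term.

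The main obstacle is verifying \eqref{OscQV} in the regime $\beta_1<1$. The naive deterministic bound $|\cQ_{\hk_N}-\hk_N|\leq C\hk_N L_N^{-\beta_1}=CN^{1/2-\beta_1/4}$ exceeds the required $\sqrt{\hk_N}\ln^2\hk_N=N^{1/4}\ln^2 N$ and therefore does not suffice, even when $\beta_1\geq 1/4$. The resolution I would pursue is to notice that $\epsilon(x)$ is (up to the normalization $1/(bL_N^2)$) the convolution of the triangular kernel $G(u)=\cG(0,u,-L_N,L_N)$ with the signed sequence $g_n=\rho_n\fq_n-b$, whose Ces\`aro average vanishes by \eqref{AvQV}; hence the spatial Ces\`aro average of $\epsilon$ is also zero. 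Because the sampled walk $\{X(\tau_k)\}$ is an approximate simple random walk with step $L_N$ (as established in the proof of Lemma \ref{LmEnvLLN}), the additive functional $\sum_k\epsilon(X(\tau_{k-1}))$ exhibits square-root cancellation of order $\sqrt{\hk_N}\,L_N^{-\beta_1}\,\text{polylog}=O(N^{(1-\beta_1)/4}\text{polylog})\leq N^{1/4}\ln^2 N$, as required. This cancellation would be extracted by solving a Poisson-type equation $\epsilon=\psi-\mathbb{E}_x\psi(X(\tau_1))$ to write $\sum_k\epsilon(X(\tau_{k-1}))$ as a martingale of bounded increments plus a telescoping boundary term, and then applying Proposition \ref{PrLLNMart} exactly as in the proof of Lemma \ref{LmEnvLLN}. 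Putting the three steps together yields the total distance $O(\max(N^{-\beta_1/2},N^{-1/8})\text{polylog})=O(N^{-(\upsilon-\eps)})$, with the arbitrarily small $\eps$ absorbing the polylogarithmic factors.
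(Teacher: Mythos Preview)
Your overall architecture matches the paper's: reduce $X_N$ to $\fm(\xi_N)$ via \eqref{RCMart}, then apply Proposition~\ref{Mart-BE} to the mesoscopic increments $Z_k$ with $L_N=N^{1/4}$. There are, however, two genuine gaps, and the second one is exactly where the paper's argument diverges from yours.

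\medskip
\textbf{Minor gap: the first reduction.} The $L^2$ maximal inequality yields $\Prob(|X_N|>\sqrt{N}\ln N)=O((\ln N)^{-2})$, and plugging this into Proposition~\ref{PrConv}(b) contributes $O((\ln N)^{-2})$ to the Berry--Esseen distance, which swamps $N^{-\upsilon}$. The paper instead invokes Azuma's inequality (the martingale $\fm(\xi_t)$ has bounded increments by \eqref{BndInc}) to obtain a stretched-exponential tail; see \eqref{TX-X}. This is an easy fix.

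\medskip
\textbf{Main gap: the quadratic-variation control.} Your plan is to verify \eqref{OscQV} at a \emph{deterministic} time $\hk_N\asymp\sqrt N$ by exploiting spatial cancellation of $\epsilon(x):=\EXP(Z_k^2\mid X(\tau_{k-1})=x)-1$ via a Poisson equation. The difficulty is that $\epsilon(X(\tau_{k-1}))$ is $\cF_{\tau_{k-1}}$-measurable, so $\sum_k\epsilon(X(\tau_{k-1}))$ has no martingale-type square-root cancellation; all cancellation must come from the spatial averaging. Carrying out your Poisson decomposition on the lattice $L_N\integers$ gives increments $|\psi_{j+1}-\psi_j|=O(j^{1-\beta_1}L_N^{-\beta_1})$ (using \eqref{RC-Occ-Q} to bound partial sums of $\epsilon$), and with $j$ ranging up to $\sqrt{\hk_N}\sim N^{1/4}$ both the telescoping boundary term and the Azuma bound on the martingale part are $O(N^{(1-\beta_1)/2})$, not the $O(\sqrt{\hk_N}\,L_N^{-\beta_1})=O(N^{(1-\beta_1)/4})$ you assert. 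Consequently \eqref{OscQV} is verified only for $\beta_1\geq 1/2$, and your route does not deliver the theorem for small $\beta_1$.

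The paper avoids this entirely by a different device: instead of fixing $\hk_N$, it introduces the \emph{random} stopping index $j^*=\min\{j:\cQ_{\tau_j}>DN\}$ and works with $\fm^*=\fm(\xi_{\tau_{j^*}})$. By construction $\cQ^*-DN$ is at most one block contribution, so \eqref{QVarMSt} makes \eqref{OscQV} automatic and Proposition~\ref{Mart-BE} gives the $N^{-1/8}$ rate directly (see \eqref{CLTRateStopMart}). The price is that one must now show $|\tau^*-N|$ is small; this is done by the moderate-deviation bound \eqref{SEBound} with drift $\eps\asymp L_N^{-\beta_2}$, $\beta_2=\min(\beta_1,\tfrac14-\eps)$, yielding \eqref{VarTime}, and then Proposition~\ref{PrConv}(a) transfers the estimate from $\fm^*$ to $\fm(\xi_N)$. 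The random-stopping trick is the key idea you are missing; it turns the hard step (controlling $\cQ$ at a fixed time) into an easy one, and relocates the $\beta_1$-dependence to a place where a straightforward moderate-deviation estimate handles it.
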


\begin{proof}
To establish the theorem it suffices to show that
\begin{equation}
\label{CLTRateMart}
 \sup_x \left|\Prob\left(\frac{\fm(\xi_N)}{\sqrt{DN}}\leq x\right)
 -\Phi(x)\right|\leq C N^{-\brupsilon} .
\end{equation}
where
$$\brupsilon= \frac{1}{2} \; \min\left(\frac{1}{4}-\eps,\; \beta_1 \right).$$
Indeed suppose that \eqref{CLTRateMart} holds. Let
$$ \tX_N=\begin{cases} X_N & \text{if } |\fm(\xi_N)|<N^{\frac{1+\heps}{2}} \\
                                       \fm(\xi_N)&\text{otherwise} \end{cases}$$
where $\heps$ is a sufficiently small number. Then due to \eqref{RCMart} there is a constant $K$ such that
$\left|\tX_N-\fm(\xi_N)\right|\leq K N^{\frac{(1+\heps)(1-\beta_1)}{2}}.$
Therefore
\begin{equation}
\label{Squeeze}
\Prob\left(\frac{\fm(\xi_N)}{\sqrt{DN}}\leq x-\frac{K}{\sqrt{D}} N^{\frac{(1-\beta_1)(1+\heps)-1}{2}}\right)
\leq
\Prob\left(\frac{\tX_N}{\sqrt{DN}}\leq x\right)
\end{equation}
$$ \leq \Prob\left(\frac{\fm(\xi_N)}{\sqrt{DN}}\leq x+
\frac{K}{\sqrt{D}} N^{\frac{(1-\beta_1)(1+\heps)-1}{2}}\right). \quad\quad\quad \quad\quad\quad \quad\quad\quad
$$
Combining \eqref{Squeeze} with \eqref{CLTRateMart} we obtain
\begin{equation}
\label{CLTRateTX}
\sup_x \left|\Prob\left(\frac{\tX_N}{\sqrt{DN}}\leq x\right)
 -\Phi(x)\right|\leq C N^{-\brupsilon}+\brK N^{\frac{\heps -\beta_1(1+\heps)}{2}}
 \leq \brC N^{-(\upsilon-\eps)}
\end{equation}
   provided that $\heps$ is small enough,

   On the other hand, by Azuma inequality, there are constants $\tc_1, \tc_2$ such that
   \begin{equation}
     \label{TX-X}
   \Prob(\tX_N\neq X_N)=P\left(\fm(\xi_N)>N^{\frac{1+\heps}{2}}\right)\leq c_1 e^{-c_2 N^\heps}.
   \end{equation}
Combining \eqref{CLTRateTX} with \eqref{TX-X} we obtain \eqref{CLTRateMain}.


It remains to obtain \eqref{CLTRateMart}.
Let $Z_j=\frac{\fm(\xi(\tau_j))-\fm(\xi(\tau_{j-1}))}{L_N}$
and
$$\cQ_j=\sum_{n=1}^{\tau_j} \fq(\xi(n)),
\quad j^*=\min\left(j: \cQ_{\tau_j}>DN \right),\quad \tau^*=\tau_{j^*},
\quad \cQ^*=\cQ_{\tau^*},\quad \fm^*=\fm(\xi_{\tau^*}).$$
Note that $\DS Z_n=\pm 1+\cO\left(N^{-\beta_1}\right)$
due to \eqref{RCMart} and
\begin{equation}
\label{QVarMSt}
\Prob\left(DN \leq Q^*<DN +L_N^2 \ln^2 L_N\right)\leq \theta^{\ln^2 L_N}
\end{equation}
due to \eqref{ExpTail}.

Next, we show that
if $R_1$ is a large constant then  for each $j\geq \frac{N}{10  a L_N^2}$
we have
\begin{equation}
\label{ControlQV}
 \bP\left(\left|\cQ_{\tau_j}-a L_N^2 j\right|>R_1 j L_N^{2-\beta_2} \right) \leq c_1 e^{-c_2 N^{\beta_3}}
\end{equation}
where $\beta_2=\min(\beta_1, \frac{1}{4}-\eps),$ and
$c_1, c_2,$ and  $\beta_3$ are positive constants.
We will prove that
\begin{equation}
\label{ControlQVUp}
 \bP\left(\cQ_{\tau_j}-a L_N^2 j>R_1 j L_N^{2-\beta_2} \right) \leq \brc_1 e^{-c_2 N^{\beta_3}},
\end{equation}
the estimate of
$\DS  \bP\left(\cQ_{\tau_j}-a L_N^2 j<-R_1 j L_N^{2-\beta_2} \right)$ being similar.

To prove \eqref{ControlQVUp} we apply the results of Appendix \ref{AppLargeMod}, specifically \eqref{SEBound}
with
$$\Delta_n=\frac{\sum_{k=\tau_{n-1}}^{\tau_n} \fq(\xi(k))}{L_N^2}- \frac{R_1}{2} L_N^{-\beta_2}, \quad
\eps=\frac{R_1}{2} L_N^{-\beta_2}$$
and the number of summands equal to $j.$
Observe that \eqref{SEBound} is applicable, because \eqref{SubMart} follows from \eqref{RC-EXP-Occ-Q} since $\beta_2\leq \beta_1,$
\eqref{Cramer} holds by \eqref{ExpTail} and \eqref{LargeDrift} holds because $\beta_2\leq \frac{1}{4}-\eps.$

\eqref{ControlQV} implies that
\begin{equation}
  \label{JStUpper}
\Prob\left(j^*>\frac{2}{a} \sqrt{N} \right)=\Prob\left(j^*>\frac{2N}{a L_N^2}\right)
\leq C_1 e^{-C_2 N^\tdelta}  .
\end{equation}
Let $\fz=\DS \sum_{j=1}^{\min(\frac{2}{a} \sqrt{N}, j^*)} Z_j.$
By \eqref{JStUpper}
\begin{equation}
  \label{Z-M}
  \Prob\left(\fz\neq \frac{\fm}{\sqrt{N}}\right)
  \leq C_1 e^{-C_2 N^\tdelta}  .
  \end{equation}
\eqref{Z-M} and \eqref{QVarMSt} allow us to
apply Proposition \ref{Mart-BE}  to $\fz$
obtaining
$$ \sup_x \left|\Prob\left(\fz \leq x\right)-\Phi(x)\right|\leq \frac{C \ln N}{N^{1/8}} $$
(note that  $N^{1/8}$ appears in the denominator since we apply the proposition with
$\frac{2}{a} \sqrt{N}$ instead of $N$). Using \eqref{Z-M} once more we get
\begin{equation}
\label{CLTRateStopMart}
\sup_x \left|\Prob\left(\frac{\fm^*}{\sqrt{DN}}\leq x\right)-\Phi(x)\right|\leq \frac{C \ln N}{N^{1/8}} .
\end{equation}

Next, similarly to \eqref{ControlQV}, one can show that there is a constant $C_2$ such that
for each $j\geq \frac{N}{10  b L_N^2}$
we have
\begin{equation}
\label{ControlOcc}
 \bP\left(\left|\tau_j-b L_N^2 j\right|>R_2 j L_N^{2-\beta_2} \right) \leq c_3 e^{-c_4 N^{\beta_3}}
\end{equation}

Combining \eqref{ControlQV} with \eqref{ControlOcc} we conclude that for sufficiently large
$R_3$
\begin{equation}
\label{VarTime}
 \Prob\left(\left|\tau^*-N\right|\geq \frac{R_3 N}{L_N^{\beta_2}}  \right)\leq c_5 e^{-c_6 L_N^{\beta_3}}.
\end{equation}
Letting
$$ \tfm=\begin{cases} \fm(\xi_N) & \text{if } \left|\tau^*-N\right|\leq \frac{N}{L_N^{\beta_2}} \\
\fm^* & \text{otherwise} \end{cases} $$
we get that with probability 1
$$ \EXP\left(\left(\tfm-\fm^*\right)^2|\fm^*\right)\leq \frac{R_4 N}{L_N^{\beta_2}} $$
or, equivalently,
\begin{equation}
\label{TM-MScaled}
\EXP\left(\left(\frac{\tfm-\fm^*}{\sqrt{N}} \right)^2\Big|\frac{\fm^*}{\sqrt{N}}\right)\leq \frac{R_4}{L_N^{\beta_2}}.
\end{equation}
Therefore combining Proposition \ref{PrConv}(a) and \eqref{CLTRateStopMart}
we obtain
$$ \sup_x \left|\Prob\left(\frac{\tfm}{\sqrt{DN}}\leq x\right)-\Phi(x)\right|\leq
C L_N^{-\brupsilon}
$$
(note that $\brupsilon=\frac{\beta_2}{2}<\frac{1}{8}$, so the main contribution to the error comes from \eqref{TM-MScaled} rather than from
\eqref{CLTRateStopMart}).

Next, \eqref{VarTime} shows that
$$ \Prob(\tfm\neq \fm(\xi_N)) \leq c_5 e^{-c_6 L_N^{\beta_3}}. $$
\eqref{CLTRateMart}
follows from the last two displays and Proposition \ref{PrConv}(b).
\end{proof}

\subsection{Examples.}
\label{SSCLTEx}
Here we show that the examples of Section \ref{ScEx} satisfy \eqref{RCMart},
\eqref{RC-Occ-V}, and \eqref{RC-Occ-Q}.
It is convenient to denote $\Delta_n=\fm_n-\fm_{n-1}.$

We begin with quasiperiodic systems from Example \ref{ExQP}.
\begin{proposition}
For quasiperiodic environments
of Example \ref{ExQP}
if $\gamma$ is Diophantine then \eqref{RCMart}, \eqref{RC-Occ-V}, and \eqref{RC-Occ-Q} hold.
\end{proposition}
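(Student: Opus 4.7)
My plan is to identify each of the three quantities in \eqref{RCMart}--\eqref{RC-Occ-Q} as a Birkhoff sum along the rotation $\omega \mapsto \omega + \gamma$ of a zero-mean $C^\infty$ function on $\Tor^d$, and then invoke the classical Fourier/small-divisor bound. Concretely, I intend to construct smooth maps $\varrho, \fM: \Tor^d \to \reals^m$ (and a similar smooth representation of $\fq_n$) such that
\[
\rho_n = \varrho(\omega+n\gamma), \qquad \fm_n = n \one + \fM(\omega+n\gamma),
\]
so that \eqref{RCMart} reduces by telescoping to $\fm_{k+L}(1)-\fm_{k-L}(1)-2L = \fM(\omega+(k+L)\gamma)(1) - \fM(\omega+(k-L)\gamma)(1)$, which is $O(1)$ from boundedness of $\fM$, while \eqref{RC-Occ-V} and \eqref{RC-Occ-Q} become Birkhoff sums of $\varrho\one - a$ and $\varrho\fq - b$ with $a = \int_{\Tor^d} \varrho\one\, d\omega$ and $b = \int_{\Tor^d} \varrho\fq\, d\omega$. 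This also identifies the constants $a$ and $b$ appearing in \eqref{AvOcc} and \eqref{AvQV}.

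\textbf{Birkhoff bound.} For any $f \in C^\infty(\Tor^d)$ with $\int f\, d\omega = 0$, expanding $f = \sum_{\ell\neq 0} \hat f_\ell\, e^{2\pi i \ell\cdot\omega}$ and summing the resulting geometric series yields
\[
\sup_{\omega,\, k,\, L}\,\left|\sum_{n=k-L}^{k+L-1} f(\omega+n\gamma)\right| \leq \sum_{\ell\neq 0} \frac{2|\hat f_\ell|}{|1-e^{2\pi i \ell\cdot\gamma}|}.
\]
The Diophantine condition \eqref{DiophTau} bounds the denominator below by $C|\ell|^{-\tau}$, and the $C^\infty$ regularity of $f$ produces super-polynomial decay of $\hat f_\ell$, so the right-hand side is finite. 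This delivers a uniform $O(1)$ error, far stronger than the required $O(L^{1-\beta_1})$ for any $\beta_1 < 1$; hence once the smooth representations are in place, the conclusion is immediate.

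\textbf{Smooth representations (main obstacle).} By Appendix \ref{AppReg}, in the Diophantine regime all the auxiliary objects $\zeta, \zeta^-, \alpha, A, v, l, \lambda, \tlambda$ extend to smooth functions on $\Tor^d$. Smoothness of $\varrho$ follows by iterating $\rho_n = \rho_{n-N}\alpha^-_{n-N+1}\cdots\alpha^-_n$: uniform contraction of the action of such products on the positive cone (cf.\ \eqref{v_n}, \eqref{v_n1}), combined with \eqref{BPNorm}, yields exponential convergence in every $C^r$-norm, defining the smooth limit $\varrho$. The main technical step is the construction of $\fM$. Substituting the ansatz $\fm_n = n\one + \fM(\omega+n\gamma)$ into the identity $u_n = \fm_n - \zeta_n \fm_{n+1}$ from Section \ref{ScGreen} yields the twisted cohomological equation
\[
\fM(\omega) - \zeta(\omega)\,\fM(\omega+\gamma) = u(\omega) + \one,
\]
which I plan to invert by the Neumann series
\[
\fM(\omega) = \sum_{k=0}^{\infty} \zeta(\omega)\zeta(\omega+\gamma)\cdots\zeta(\omega+(k-1)\gamma)\bigl(u(\omega+k\gamma) + \one\bigr).
\]
Convergence in $C^\infty$ follows from the geometric contraction of stochastic-matrix products \eqref{prodZeta}, which makes each summand exponentially small in $k$, tempered by only polynomial growth of $C^r$-norms under composition with the Diophantine rotation. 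Lemma \ref{LmCurrent} and the normalizations \eqref{XMNorm}, \eqref{RhoMNorm} ensure that the right-hand side $u+\one$ lies in the range of $I - \zeta\circ R_\gamma$. Once $\varrho$ and $\fM$ are available, smoothness of $\fq_n = \fq(\omega+n\gamma)$ is automatic because $\fq_n$ is a polynomial in the matrix entries of $(P,Q,R)_n$ and in the one-step increments of $n\one + \fM(\omega+n\gamma)$. The matrix-valued nature of the coefficient $\zeta(\omega)$ is the main obstacle, distinguishing this cohomological equation from the scalar prototype; but the spectral gap provided by the stochastic structure of $\zeta$-products sidesteps the usual small-divisor obstruction at the level of solvability, leaving the Diophantine condition to enter only through the regularity of the individual summands.
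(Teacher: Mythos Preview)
Your Birkhoff-sum reduction for \eqref{RC-Occ-V} and \eqref{RC-Occ-Q} is correct and matches the paper's route: once $\brho$ and the induced $\fq$ are shown to be $C^\infty$ on $\Tor^d$, the Fourier/small-divisor bound gives an $O(1)$ error, i.e.\ $\beta_1=1$.

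The construction of $\fM$, however, has a genuine gap. Your Neumann series
\[
\fM(\omega)=\sum_{k\ge 0}\zeta(\omega)\cdots\zeta(\omega+(k-1)\gamma)\bigl(u(\omega+k\gamma)+\one\bigr)
\]
diverges: since every $\zeta(\cdot)$ is stochastic, $\zeta(\omega)\cdots\zeta(\omega+(k-1)\gamma)\,\one=\one$ for all $k$, so the $k$-th summand equals $\one+\zeta_0\cdots\zeta_{k-1}u_k$ and does not tend to zero. Equation \eqref{prodZeta} is a contraction estimate \emph{transverse} to $\one$, not a decay estimate for the full product; the constant vector $\one$ is an eigenvector of the transfer operator $\zeta(\cdot)\circ R_\gamma$ with eigenvalue exactly $1$, so $I-\zeta\circ R_\gamma$ has nontrivial kernel and cannot be inverted by a geometric series. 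Consequently your closing claim---that the stochastic spectral gap ``sidesteps the usual small-divisor obstruction at the level of solvability''---is incorrect in the $\one$-direction: there the problem reduces to the scalar cohomological equation over the rotation, and the Diophantine condition is needed for solvability, not merely for regularity of summands.

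The paper handles this by working with the increment $\bDelta(\omega)=\fm_0-\fm_{-1}$ rather than $\fM$ itself. Lemma~\ref{LmSmooth} shows $\bDelta\in C^\infty(\Tor^d)$ via the explicit formula \eqref{RhoDelta}, in which the $\one$-component has already been isolated (through $\beta$, whose smoothness \emph{does} use the scalar Diophantine cohomological equation) and the remaining series $\cB$ is built from the centered quantities $\zeta_0\cdots\zeta_{k-1}v_k-(\sigma_k v_k)\one$, which do decay by \eqref{prodZeta}. Once $\bDelta$ is smooth, \eqref{RCMart} follows by solving the \emph{scalar} equation $\bDelta(\omega)(1)-1=g(\omega+\gamma)-g(\omega)$ componentwise---exactly your Birkhoff-bound argument, applied to $\bDelta-\one$ rather than to a matrix-twisted equation. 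In short: split off the $\one$-direction first and solve it with the scalar small-divisor machinery; the Neumann series is only valid on the complementary subspace.
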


\begin{proof}
It is proven in \cite{DG4} that for quasiperiodic environments  with Diophantine frequency $\gamma$
$$ \Delta_n=\bDelta(\omega+n\gamma), \quad \rho_n=\brho(\omega+n\gamma) $$
where $\bDelta, \brho:\Tor^d\to\reals$ are continuous functions.
In Appendix \ref{AppReg} of the present paper we obtain a stronger result.

\begin{lemma}
\label{LmSmooth}
$\bDelta, \brho$ are $C^\infty.$
\end{lemma}
Lemma \ref{LmSmooth} implies \eqref{RCMart}, \eqref{RC-Occ-V}, and \eqref{RC-Occ-Q}
with $\beta_1=1.$
For example to check \eqref{RCMart} we use the fact that for Diophantine $\gamma$
there is a constant $c$ and
a function $u$ such that
$$ \bDelta(\omega)\one=c+u(\omega+\gamma)-u(\omega). $$
It follows that
$$ \cM_{k+L}-\cM_{k-L}=2Lc+u(\omega+L\gamma)-u(\omega-L\gamma). $$
Now \eqref{XMNorm} implies that $c=m$ proving \eqref{RCMart}.
Estimates \eqref{RC-Occ-V} and \eqref{RC-Occ-Q} are verified similarly.
\end{proof}
Since quasiperiodic environments  satisfy \eqref{RCMart}, \eqref{RC-Occ-V}, and \eqref{RC-Occ-Q} with
$\beta_1=1,$ Theorem \ref{ThCLTRate} holds for those environments with $\upsilon=\frac{1}{8}.$

\smallskip
Next, we consider independent environments from Example~\ref{ExInd}.

\begin{proposition}
\label{PrMIM-Ind}
\eqref{RCMart} \eqref{RC-Occ-V}, and \eqref{RC-Occ-Q}
hold for independent environments.
\end{proposition}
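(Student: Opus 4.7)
The plan is to exploit the i.i.d. structure of the environment to reduce each of \eqref{RCMart}, \eqref{RC-Occ-V}, \eqref{RC-Occ-Q} to a Bernstein-type deviation estimate for a partial sum of a stationary sequence with exponentially decaying dependence, and then to upgrade this pointwise estimate to uniform control in $k,L$ by a union bound and Borel--Cantelli.

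First, I would set up the stationary representation. In Example \ref{ExInd} the shift $T$ on $\Omega$ is measure preserving and ergodic. Writing $A_n=A(T^n\omega)$ and $\alpha_n=\alpha(T^n\omega),$ the contraction property \eqref{v_n1} (and its analogue for $l_n,\zeta_n,\zeta_n^-$) shows that $v_n,l_n,\zeta_n,\zeta_n^-$ are functions of the full environment depending on the coordinates $(P_{n+k},Q_{n+k},R_{n+k})$ only up to an exponentially small error in $|k|.$ Since $\rho_n=\rho_{n+1}\alpha_n$ iterated to $\pm\infty$ pins down $\rho_n$ (up to the normalization \eqref{RhoMNorm}) as a stationary functional of $\omega$ inherited from $l_n$ and the scalars $\tlambda_k,$ the sequences $\rho_n\one$ and $\rho_n\fq_n$ are bounded, stationary and exponentially $\phi$-mixing. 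Similarly, using the representation \eqref{MultiStepMU} together with Lemma \ref{LmCurrent} to identify the ``current,'' the increment $\Delta_n:=\fm_n(1)-\fm_{n-1}(1)$ can be expressed as a bounded stationary exponentially-mixing function of $\omega,$ and its expectation equals $1$ under the normalization \eqref{XMNorm}. The means of $\rho_n\one$ and $\rho_n\fq_n$ are then, by Birkhoff's theorem, the same constants $a,b$ appearing in \eqref{AvOcc}--\eqref{AvQV}.

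Second, for stationary bounded sequences $\{f(T^n\omega)\}$ with exponential mixing rate, classical Bernstein-type estimates give, for each fixed $k$ and $L,$
\[
\mathrm{P}\!\left(\left|\sum_{j=k-L}^{k+L}\big(f(T^j\omega)-\mathrm{E}[f]\big)\right|\ge L^{1-\beta_1}\right)\le C\exp\!\big(-c\,L^{1-2\beta_1}\big)
\]
for any $\beta_1<\tfrac12.$ Fix $\beta_1=\tfrac12-\eps.$ For each $N$ take a union bound over all integers $k$ with $|k|\le N$ and all dyadic $L\in[N^{0.01},N]:$ there are only polynomially many such pairs, whereas each failure probability is stretched-exponential in $N^{0.02\eps},$ so $\sum_N$ of the total failure probability is finite. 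Borel--Cantelli yields simultaneous validity of the three bounds for all admissible $k,L$ and all sufficiently large $N,$ and a routine dyadic interpolation removes the restriction to dyadic $L.$ Applying this scheme to the three stationary functionals identified above produces \eqref{RCMart}, \eqref{RC-Occ-V}, and \eqref{RC-Occ-Q}.

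The main obstacle is the stationary representation for the martingale. The sequences $\rho_n\one$ and $\rho_n\fq_n$ are visibly of the form $F(T^n\omega)$ once one invokes \eqref{v_n1}, but $\fm_n$ itself is not stationary (it grows linearly), so one has to argue at the level of the increments $\Delta_n.$ The construction of $\fm_n$ in \cite[\S7]{DG4} together with the identities in Section~\ref{ScGreen} (in particular $u_n=A_nu_{n-1}$ and Lemma \ref{LmCurrent}) provides $\Delta_n$ as an explicit bounded stationary functional, and the relation between the normalizations \eqref{XMNorm}, \eqref{RhoMNorm} and the expectations of $\Delta_n,$ $\rho_n\one,$ $\rho_n\fq_n$ has to be spelled out carefully so that the Bernstein bound is centered at exactly the constants $1,\,a,\,b$ appearing in \eqref{RCMart}--\eqref{RC-Occ-Q}. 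Once this calibration is done, Steps two and three are standard.
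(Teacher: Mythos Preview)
Your proposal is correct and follows essentially the same approach as the paper: establish that $\Delta_n$, $\rho_n\one$, and $\rho_n\fq_n$ are bounded stationary functionals with exponentially fast localization to finitely many coordinates (the paper packages this as H\"older continuity of $\brho,\bDelta$ in Lemma~\ref{LmHolder}), then use a large-deviation bound uniform in $k$ and $L$. The only cosmetic difference is that where you carry out Bernstein plus a union bound and Borel--Cantelli by hand, the paper invokes Gaposhkin's theorem \cite{G} directly to obtain the uniform estimate \eqref{
GaposhIneq}.
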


\begin{proof}
Let $\bbF_{a,b}$ be the $\sigma$ algebra generated by $\{(P, Q, R)_n\}_{a\leq n\leq b}.$
We use the following fact from Appendix \ref{AppReg}.

\begin{lemma}
\label{LmHolder}
$\rho_n=\brho(T^n \omega)$ and $\Delta_n=\bDelta(T^n \omega)$
where
$\brho:\Omega\to\reals^m$ is Holder continuous with respect to the metric $\bd$
defined by \eqref{DefD}.
\end{lemma}
By Lemma \ref{LmHolder}, there is $\theta<1$ such that for each $l$ there is $\bbF_{-l, l}$ measurable random vector $\brho^{(l)}$ such that
$|\brho(\omega)-\brho^{(l)}(\omega)|\leq \theta^l.$
Hence
$$ \left|\mathrm{E}(\rho_n \one|\cF_{-\infty, n-l})-\mathrm{E}(\brho\one )\right|\leq C \theta^l. $$
Now \cite{G} tells us that for almost every $\omega$ there exists $N_0=N_0(\omega)$
such that for all $|k|<N$ for all $L>N^{0.01}$ we have
\begin{equation}
\label{
GaposhIneq}
 \left|\sum_{n=k-L}^{k+L} \rho_n \one-2La\right|\leq \sqrt{L\ln^3 L} .
\end{equation}
This proves \eqref{RC-Occ-V}.  Estimates \eqref{RCMart} and \eqref{RC-Occ-V}
can be established similarly.
\end{proof}
The foregoing discussion shows that \eqref{RCMart} \eqref{RC-Occ-V}, and \eqref{RC-Occ-Q} hold with
$\beta_1=\frac{1}{2}-\eps$ (cf. \eqref{GaposhIneq}). Accordingly, Theorem \ref{ThCLTRate} holds with
$\upsilon=\frac{1}{8}.$

\smallskip
Finally we consider small perturbations of the simple random walk on $\integers$ from Example~\ref{ExPert}.

Then the invariant measure equation \eqref{RhoEq} reduces to a zero flux condition
(see e.g. \cite[\S 5.5]{Du})
$$ p_n \rho_n=q_{n+1} \rho_{n+1} $$
which gives
$$ \frac{\rho_{n+1}}{\rho_{n}}=\frac{1-2 a_n}{1+2 a_{n+1}} .$$
Considering first the case $n>0$ we obtain
$$ \rho_n= \rho_0 \left[\prod_{j=0}^{n-1} \left(\frac{1-2 a_j}{1+2 a_{j+1}} \right)\right].$$
Therefore the limit $\DS \rho_+=\lim_{n\to\infty} \rho_n$ exists and

\begin{equation}
\label{Rho+}
 \rho_n=\rho_+ 
+O\left(\frac{1}{n^{\kappa-1}} \right).
\end{equation}
Likewise the limit $\DS \rho_-=\lim_{n\to\infty} \rho_{-n}$ exists and
\begin{equation}
\label{Rho-}
 \rho_{-n}=\rho_-
+O\left(\frac{1}{n^{\kappa-1}} \right).
\end{equation}

Next, recall a formula for $\fm_n$ (\cite{Du}).
Let $\Delta_n=\fm_{n+1}-\fm_n.$
Then
$$ \Delta_{n+1}=\Delta_n \frac{1+2 a_n}{1-2a_n} $$
Thus the limit $\DS \Delta_+=\lim_{n\to\infty} \Delta_n$ exists and
\begin{equation}
\label{Delta+}
 \Delta_n=\Delta_0 \left[\prod_{j=0}^{n-1} \left(\frac{1+2 a_j}{1-2 a_{j}} \right)\right]=\Delta_+
 +O\left(\frac{1}{n^{\kappa-1}} \right) .
\end{equation}
Likewise the limit $\DS \Delta_-=\lim_{n\to\infty} \Delta_{-n}$ exists and
\begin{equation}
\label{Delta-}
 \Delta_{-n}=\Delta_0 \left[\prod_{j=0}^{n-1} \left(\frac{1-2 a_{-j}}{1+2 a_{-j}} \right)\right]=\Delta_-
 +O\left(\frac{1}{n^{\kappa-1}} \right) .
\end{equation}
Accordingly \eqref{NNUnbiased} is equivalent to the condition $\Delta_+=\Delta_-.$
Hence if \eqref{NNUnbiased} holds we can normalize $\{\Delta_n\}$ in such a way that
$\DS \lim_{n\to\pm \infty} \Delta_n=1.$
In this case \eqref{XMNorm} holds and \eqref{RhoMNorm} gives
$\DS \lim_{n\to\pm \infty} \rho_n=1.$

Now \eqref{Delta+}, \eqref{Delta-}, \eqref{Rho+}, and \eqref{Rho-}
show that
\begin{equation}
\label{RM-SRW}
 \Delta_n=1+O\left(\frac{1}{|n|^{\kappa-1}}\right) \quad \text{and}\quad
\rho_n=1+O\left(\frac{1}{|n|^{\kappa-1}}\right).
\end{equation}
It follows that \eqref{RCMart}, \eqref{RC-Occ-V} and \eqref{RC-Occ-Q} hold with $\beta_1=\min(\kappa-1, 1).$
Hence Theorem~\ref{ThCLTRate} holds in Example \ref{ExPert} with
$\DS \upsilon=\min\left(\frac{\kappa-1}{2}, \frac{1}{8}\right). $

\section{Different growth rates.}
\label{ScSkew}
\subsection{Notation}

In this section we consider the case where $\fm, \rho,$ and $\fq$ have different growth rates at
$-\infty$ and $+\infty.$ Thus we assume that instead of \eqref{XMNorm}, \eqref{AvOcc} and \eqref{AvQV}
we have
\begin{equation}
\label{XMNormPM}
\lim_{n\to -\infty} \frac{\cM_n}{n}=m\mu_-, \quad
\lim_{n\to +\infty} \frac{\cM_n}{n}=m\mu_+;
\end{equation}
\begin{equation}
\label{AvOccPM}
\lim_{N\to\infty} \frac{1}{N} \sum_{n=-N+1}^{0} \rho_n \one =a_-, \quad
\lim_{N\to\infty} \frac{1}{N} \sum_{n=0}^{N-1} \rho_n \one =a_+;
\end{equation}

\begin{equation}
\label{AvQVPM}
\lim_{N\to\infty} \frac{1}{N} \sum_{n=-N+1}^{0} \rho_n \fq_n  =b_-, \quad
\lim_{N\to\infty} \frac{1}{N} \sum_{n=0}^{N-1} \rho_n \fq_n  =b_+.
\end{equation}
We denote $ D_\pm=\frac{b_\pm}{a_\pm}.$

Given $\mu_1, \mu_2$ let
$$ \cS_{\mu_1, \mu_2}(w)=\begin{cases} \frac{w}{\mu_1} & \text{if }w\geq 0\\ \frac{w}{\mu_2} & \text{if }w\leq 0.\end{cases}$$

Given $\theta, \gamma$ and $D$ we consider the following Markov process.
Let $\cW_D(u)$ be the Brownian motion with zero mean and variance $Du.$
Denote by $u^+(u)$ the total time on $[0,u]$ when $\cW$ is positive and
$u^-(u)$ the total time on $[0,u]$ when $\cW$ is negative. Given $t$ let $u_\gamma(t)$ be the solution of
$$ u^+(u_\gamma)+\frac{u^-(u_\gamma)}{\gamma}=t. $$
Set
$$ \cW_{\gamma, \theta, D}(t)=\cS_{\theta, 1} (\cW_D(u_\gamma(t))).$$
Note that this process is defined using the function $\cS$ with parameters $\theta$ and $1.$
Allowing more general parameters does not increase the generality since $\mu_2=1$
can always be achieved by rescaling because
$\cS_{\mu_1, \mu_2}(\cW_D(u_\gamma(\cdot)))$ has the same law as
$\cW_{\gamma, \frac{\mu_2}{\mu_1}, \frac{D}{\mu_1^2}}.$

In the case where
\begin{equation}
\label{GammaThetaSBM}
\gamma=\left(\frac{1-\fp}{\fp}\right)^2, \quad \theta=\frac{1-\fp}{\fp}
\end{equation}
the process $\cW_{\gamma, \theta, D}$ is referred to as the {\em skew Brownian Motion with parameter
$\fp$.} We will thus abbreviate $\cW_{\left(\frac{1-\fp}{\fp}\right)^2, \frac{1-\fp}{\fp}, D}$
as $\cB_{\fp, D}.$ Note that $\fp$ in \eqref{GammaThetaSBM} is given by
\begin{equation}
\label{ParSkew}
\fp=\frac{1}{\theta+1}.
\end{equation}

We refer the reader to \cite{Lej} for description of various equivalent definitions of the skew
Brownian Motion as well as its numerous applications. Of these definitions, the most relevant for us is
the following one \cite{HS}: $\cB_{\fp, 1}(t)$ is the scaling limit of
$ \frac{\fX(tN)}{\sqrt{N}}$ where $\fX$ is the random walk on $\integers$ which
moves to the left and to the right with probability $1/2$ everywhere
except the origin; at the origin $\fX$ moves to the right with probability $\fp$ and to the left with probability
$1-\fp.$

\subsection{Functional CLT}
\begin{theorem}
\label{ThSkewGen}
$W_N(t)=\frac{X_{Nt}}{\sqrt{N}}$ converges in law as $N\to\infty$
to $\cW_{\gamma, \theta, D}$ where
$$ \gamma=\frac{D_-}{D_+}, \quad
\theta=\frac{\mu_-}{\mu_+}, \quad D=\frac{D^+}{\mu_+^2}. $$
\end{theorem}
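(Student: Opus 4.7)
The plan is to follow the structure of the proof of Theorem~\ref{ThCLTWalk}, but carefully track the two different asymptotic regimes. Set $\hW_N(t)=\fm(\xi_{Nt})/\sqrt N$; this is a martingale with increments of order $N^{-1/2}$. The layer-boundedness \eqref{BndInc} combined with \eqref{XMNormPM} gives $\fm_n(y)=\mu_+ n+O(1)$ as $n\to+\infty$ and $\fm_n(y)=\mu_- n+O(1)$ as $n\to-\infty$, uniformly in $y\in\{1,\dots,m\}$. Consequently on compact time intervals
\[
W_N(t)=\cS_{\mu_+,\mu_-}(\hW_N(t))+o(1)
\]
uniformly, the error coming only from an $O(1)$-neighbourhood of the origin, whose total contribution is controlled by a local-time estimate of the type used in Lemma~\ref{LmUILT}. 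Since $\cS_{\mu_+,\mu_-}$ is continuous, it suffices to identify the weak limit of $\hW_N$ and then push it through $\cS_{\mu_+,\mu_-}$.

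The next step is to compute the limit of the quadratic variation $[\hW_N]_t=N^{-1}\sum_{n<Nt}\fq(\xi_n)$. The environment-viewed-by-the-particle argument of Lemma~\ref{LmEnvLLN} does not apply verbatim because $\rho_n\fq_n$ has different Ces\`aro means at $\pm\infty$, but the block decomposition from that proof does. Taking $L_N=\lfloor\delta\sqrt N\rfloor$ and letting $\tau_k$ be the hitting times defined in \eqref{DefTauK}, Lemma~\ref{LmGF} together with \eqref{AvOccPM}--\eqref{AvQVPM} gives, for a block $[\tau_{k-1},\tau_k]$ lying entirely in the positive (resp.\ negative) region,
\[
\EXP\bigl[\tau_k-\tau_{k-1}\bigm|\xi_{\tau_{k-1}}\bigr]\sim a_{\pm} L_N^2,\qquad
\EXP\Bigl[\textstyle\sum_{n=\tau_{k-1}}^{\tau_k-1}\fq(\xi_n)\,\Bigm|\,\xi_{\tau_{k-1}}\Bigr]\sim b_{\pm} L_N^2.
\]
Boundary blocks straddling the origin correspond to visits of the coarse-grained walk $X_{\tau_k}/L_N$ to the level $0$; their number is $O(1/\delta)$ almost surely, so their contribution is $o(N)$ after letting $N\to\infty$ and then $\delta\to 0$. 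Passing to the limit in probability yields
\[
[\hW_N]_t\;\longrightarrow\;A(t):=D_+\,\cT^+(t)+D_-\,\cT^-(t),
\]
where $\cT^{\pm}(t)=\int_0^t \mathbf 1_{\pm\hW(s)>0}\,ds$ are the occupation times of the (yet-to-be-identified) limit $\hW$.

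With the QV under control, I would apply a functional martingale CLT with a random, state-dependent time change (Helland's criterion, or equivalently a direct Dambis--Dubins--Schwarz/Skorokhod embedding) to conclude that every weak sub-sequential limit $\hW$ of $\hW_N$ is a continuous martingale of the form $\hW(t)=\tB(A(t))$ for a standard Brownian motion $\tB$, with $A$ as above. Tightness of $\hW_N$ is routine since its increments are $O(N^{-1/2})$ and $[\hW_N]_t$ grows at most linearly. The identification step then closes the implicit relation between $A$ and the occupation times of $\tB\circ A$: a change of variables gives $\cT^{\pm}(t)=u^{\pm}_\tB(A(t))/D_{\pm}$, and the constraint $\cT^++\cT^-=t$ rearranges (with $\gamma=D_-/D_+$) to $u^+_\tB(A(t))+u^-_\tB(A(t))/\gamma=D_+ t$, i.e.\ $A(t)=u_\gamma(D_+ t)$ in the notation of Section~\ref{ScSkew}. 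This equation has a unique continuous solution given $\tB$, which pins down the law of $\hW$ uniquely and therefore also the limit of $\hW_N$. This step is the main obstacle: unlike the symmetric case of Theorem~\ref{ThCLTWalk}, the QV does not converge to a deterministic linear function of $t$, so the form of the martingale CLT \cite{Bn} used there is not directly applicable; one must invoke a variant that accommodates state-dependent QV, and verify joint tightness of $(\hW_N,[\hW_N])$ carefully.

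Finally, combining $\hW(t)=\tB(u_\gamma(D_+ t))$ with the first paragraph gives $W_N\Rightarrow\cS_{\mu_+,\mu_-}(\tB(u_\gamma(D_+\cdot)))$. Absorbing $D_+$ into the amplitude via Brownian scaling, this limit rewrites in law as $\cS_{\theta,1}(\cW_D(u_\gamma(\cdot)))=\cW_{\gamma,\theta,D}$ with exactly the parameters $\gamma=D_-/D_+$, $\theta=\mu_-/\mu_+$, $D=D_+/\mu_+^2$ stated in the theorem, completing the proof.
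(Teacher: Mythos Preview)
Your approach is correct in outline but takes a genuinely different route from the paper. The paper sidesteps the self-referential quadratic variation problem you flag by introducing an auxiliary \emph{lazy} walk $\txi$: on the negative half-strip $\txi$ stays put with probability $1-\gamma$ and otherwise moves like $\xi$. This has two effects. First, there is a natural coupling $\txi(u)=\xi(t(u))$, and the law of large numbers for geometric waiting times gives $t\approx u^+(u)+\gamma\,u^-(u)$, which is exactly the time change $u_\gamma$. Second, for $\txi$ the ratio (quadratic-variation increment)$/$(time increment) over a mesoscopic block is asymptotically the \emph{same} constant $D_+$ on both sides of the origin (the laziness multiplies the numerator by $\gamma$ and the denominator by $1/\gamma$ cancels against the change in invariant measure), so Brown's theorem \cite{Bn} applies directly to $\fm(\txi_{Nu})/\sqrt N$ with a deterministic limiting quadratic variation $D_+u$. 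Thus the paper never needs Helland/DDS or a fixed-point identification: the time change is produced by the coupling rather than extracted a posteriori from the limit.

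Your direct route---show joint tightness of $(\hW_N,[\hW_N])$, identify any subsequential limit $(\hW,A)$ with $A(t)=D_+\cT^+(t)+D_-\cT^-(t)$, apply DDS, and solve the implicit equation $u^+(A(t))/D_++u^-(A(t))/D_-=t$ for $A$---is a legitimate alternative. To make it airtight you should (i) justify continuity of the occupation-time functional at the limit, e.g.\ by showing via the block estimates that $\hW_N$ spends $o(1)$ time in $[-\eps,\eps]$ uniformly in $N$ as $\eps\to0$, so that the subsequential limit spends zero time at $0$; and (ii) note that the implicit equation has a unique continuous solution because $u\mapsto u^+(u)/D_++u^-(u)/D_-$ is a strictly increasing homeomorphism of $[0,\infty)$. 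With those two points filled in your argument goes through. The trade-off is clear: the paper's lazy-walk trick is shorter and uses only the martingale CLT with deterministic variance already invoked in Theorem~\ref{ThCLTWalk}, whereas your argument is more conceptual (no auxiliary process) but requires the heavier random-time-change machinery and a careful subsequential-uniqueness step.
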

\begin{proof}
The proof of Theorem \ref{ThSkewGen} is very similar to the proof of Theorem \ref{ThCLTWalk} so we just sketch
the argument. As in Theorem \ref{ThCLTWalk} it suffices to show that
$ \hW_N$ defined by \eqref{DefhW}
converges to $\cW_{D^+} (u_\gamma(t)).$ We may assume without loss of generality that $D^-< D^+$
and so $\gamma<1.$ If this is not the case we could consider the reflected walk
$(-X(N), Y(N)).$
Let $\txi(u)=(\tX(u), \tY(u))$ be the following lazy walk. If $\tX\geq 0$ then its transition
probability coincides with $\mathfrak{P}.$ If $\tX<0$ then, with probability $1-\gamma,$
$\txi$ stays at its present location and with probability $\gamma$ it moves according to $\mathfrak{P}.$
There is a natural coupling between $\xi$ and $\txi$ such that
$\txi(u)=\xi(t(u)).$ Let $u(t)$ be the left inverse to $t(u).$ It is clear from the law of large numbers for
sums of geometric random variables that  with probability 1
$$ \lim_{t\to\infty} \frac{t}{u^+(u(t))+u^-(u(t))/\gamma}=1 $$
where $u^+(u)$ and $u^-(u)$ are occupation times of positive and negative semi-axis.
It therefore suffices to show that
\begin{equation}
\label{LazyFLT}
\tW_N\Rightarrow \cW_{D^+}
\end{equation}
where $\tW_N(u)=\frac{\fm(\txi(Nu))}{\sqrt{N}}$ and $\cW_{D^+}$ is the Brownian Motion with zero drift and
variance $D^+u.$
Note that
$\fm(\txi(u))$ is a martingale with quadratic variation $\sum_{n=1}^u \tfq(\txi(n))$
where
$$ \tfq(x,y)=\begin{cases} \fq(x,y) & \text{if } x\geq 0 \\
\gamma \fq(x,y) & \text{if } x<0. \end{cases} $$
According to \cite{Bn} it suffices to show that
\begin{equation}
\label{LimQVar}
\lim_{N\to\infty} \frac{\sum_{j=1}^N \tfq(\txi(j))}{N}=D^+.
\end{equation}
The proof of \eqref{LimQVar} is the same as the proof of Lemma \ref{LmEnvLLN}. The key step is to show
that if $L_N$ is the same as in the lemma, $|x|\leq K\sqrt{N}$ and $\ttau$
is the exit time from $[x-L_N, x+L_N]$ by $\txi$
then we have that for each $y\in\{1\dots m\}$
\begin{equation}
\label{LimQVTau} \tD_N(x,y)\approx D^+ \quad\text{where}\quad
\tD_N(x,y)=\frac{\EXP(\sum_{u=0}^{\ttau} \tfq(\txi(u))|\txi(0)=(x,y))}{\EXP(\ttau|\txi(0)=(x,y))}.
\end{equation}
To fix the ideas, suppose
that $[x-L_N, x+L_N]\subset (-\infty, 0)$ then
\begin{equation}
\label{QVNormalLazy}
\tD_N(x,y)=\gamma
\frac{\EXP(\sum_{u=0}^{\ttau} \fq(\txi(u))|\txi(0)=(x,y))}{\EXP(\ttau|\txi(0)=(x,y))}.
\end{equation}
Note that $\txi_N$ satisfies \eqref{MartEq}, \eqref{RhoEq} with
$$ \tfm(x,y)=\fm(x,y),\quad \trho(x,y)=\begin{cases} \rho(x,y) & \text{if } x\geq 0 \\
\frac{\rho(x,y)}{\gamma} & \text{if } x<0. \end{cases} $$
The computations in Section \ref{ScEnvLLN}, in particular, \eqref{AvEscape} and \eqref{GH},
applied to $\txi,$ show that the second factor in the RHS of \eqref{QVNormalLazy}
is asymptotic to $D_-$ so that
$$ \tD_N(x,y)\approx \gamma D^-=D^+$$
as claimed. Once \eqref{LimQVTau} is established the proof of \eqref{LimQVar}  proceeds as in Section
\ref{ScEnvLLN}.
\end{proof}

\subsection{Small perturbations of the environment.}
Consider an environment on $\bbS$ satisfying conditions
\eqref{XMNorm}, \eqref{AvOcc}, and \eqref{AvQV}. Let $\fP$ be defined by \eqref{striptransition}.
Consider a perturbation $\tilde\fP$ of $\fP$ such that
$$ \left|\bar\fP(z, z')-\fP(z, z')\right|\leq \frac{C}{|n|^\kappa+1}
\text{ where } z=(n,j) \text{ and } \kappa>1.$$

Let $\beta_n, \tbeta_n$ be sequences such that
\begin{equation}
\label{DefBetaTBeta}
\lambda_n=\frac{\beta_{n+1}}{\beta_n}, \quad
\tlambda_n=\frac{\tbeta_{n+1}}{\tbeta_n}.
\end{equation}
The following result is proven in Appendix \ref{AppReg}.

\begin{lemma}
\label{LmPertMatr}
(a) The following estimates hold
\begin{equation}
\label{AKey}
 ||\zeta_n-\brzeta_n||\leq \frac{C}{|n|^\kappa+1}, \quad
||l_n-\brl_n||\leq \frac{C}{|n|^\kappa+1}, \quad
||v_n-\brv_n||\leq \frac{C}{|n|^\kappa+1},
\end{equation}
\begin{equation}
\label{ACor}
||A_n-\brA_n||\leq \frac{C}{|n|^\kappa+1}, \quad
||\lambda_n-\brlambda_n||\leq \frac{C}{|n|^\kappa+1}, \quad
||\tlambda_n-\overline{\widetilde{\lambda}}_n||\leq \frac{C}{|n|^\kappa+1}.
\end{equation}
(b) The following limits exist
\begin{equation}
\label{LimBeta}
\beta_\pm=\lim_{n\to\pm\infty} \frac{\bar\beta_n}{\beta_n}=
\lim_{n\to\pm\infty} \frac{\overline{\tbeta}_n}{\tbeta_n}.
\end{equation}
(c) The perturbed walk satisfies
\eqref{XMNormPM}, \eqref{AvOccPM} and \eqref{AvQVPM} with
$$ \mu_\pm=\beta_\pm,   \quad
a_\pm=a/\beta_\pm,  \quad
b_\pm=b \beta_\pm
$$
where $a$ and $b$ are the limits of \eqref{AvOcc} and \eqref{AvQV} respectively
for the unperturbed walk.
\end{lemma}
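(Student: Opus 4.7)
The lemma has three parts, and I would prove them in order, with each part building on the previous.

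For part (a), the plan is to exploit the uniform contraction of the defining recursions and the rapid decay of the environmental perturbation. Consider $\zeta_n$ first. Since $\zeta_n = (I - R_n - Q_n \zeta_{n-1})^{-1} P_n$ and the dynamics underlying \eqref{prodZeta} is uniformly contracting in $\zeta_{n-1}$, setting $D_n = \zeta_n - \brzeta_n$ and expanding the recursion yields $\|D_n\| \leq \theta \|D_{n-1}\| + C/(|n|^\kappa+1)$ for some $\theta<1$. Iteration gives
$$\|\zeta_n - \brzeta_n\| \leq C \sum_{k \leq n} \theta^{n-k}/(|k|^\kappa+1),$$
and splitting this sum into $|k| \geq |n|/2$ (where $|k|^{-\kappa}\leq C|n|^{-\kappa}$) and $|k| < |n|/2$ (where the exponential weight is $O(\theta^{|n|/2})$) will produce $\|\zeta_n - \brzeta_n\| = O(|n|^{-\kappa})$. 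The same argument applied to the backward recursion handles $l_n$, and an entirely parallel forward-recursion argument handles $v_n$. The bounds for $A_n,\lambda_n,\tlambda_n$ in \eqref{ACor} then follow directly from \eqref{AAlpha} and \eqref{EqLambda} by the triangle inequality.

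For part (b), I would start from the telescoping identity
$$\log\frac{\brbeta_n}{\beta_n} - \log\frac{\brbeta_0}{\beta_0} = \sum_{k=0}^{n-1}\log\frac{\brlambda_k}{\lambda_k}.$$
By part (a) and the uniform positive lower bound on $\lambda_k$ ensured by \eqref{BPNorm}, $|\log(\brlambda_k/\lambda_k)| \leq C/(|k|^\kappa+1)$, which is summable on each half-line since $\kappa > 1$; this will deliver the two-sided limits $\beta_\pm$. To obtain the coincidence with the limit of $\overline{\tilde\beta}_n/\tilde\beta_n$, I would iterate the identity $Q_{n+1} A_n = \alpha_n Q_n$ (from \eqref{AAlpha}--\eqref{AAlpha1}) and sandwich between $l_{n+1}$ and $v_0$ to obtain $\tilde\beta_n/\beta_n = \text{const}\cdot l_n Q_n v_{n-1}$; part (a) then gives the corresponding perturbed ratio with relative error $O(|n|^{-\kappa})$, so that $\overline{\tilde\beta}_n/\tilde\beta_n$ and $\brbeta_n/\beta_n$ share the same limit.

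For part (c), the plan is to express the asymptotic growth rates of $\bar{\fm},\brrho,\bar{\fq}$ in terms of $\brbeta_n/\beta_n$. From the discussion after \eqref{URec}, $u_n = \fm_n - \zeta_n\fm_{n+1}$ satisfies $u_n = c_0(\beta_n/\beta_0)v_n$ for some constant $c_0$; substituting the asymptotic $\fm_n \sim n\one$ from \eqref{XMNorm2} into the definition of $u_n$ forces $u_n \to -\one$ at both infinities, whence $|c_0|\beta_n/\beta_0 \to 1$. The same computation for the perturbed walk yields $\bru_n = \bar c_0(\brbeta_n/\brbeta_0)\brv_n$, and since $\bar{\fm}_n \sim \mu_\pm n\one$ at $\pm\infty$ by \eqref{XMNormPM}, $\bru_n \to -\mu_\pm\one$; dividing the perturbed by the unperturbed relation and invoking part (b) identifies $\mu_\pm = \beta_\pm$ once the initial normalization constants are matched. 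The duality $\rho_n \propto l_n/\tilde\beta_n$, which follows from $\rho_n = \rho_{n+1}\alpha_n$ and $l_{n+1}\alpha_n = \tlambda_n l_n$, combined with the perturbation bound on $l_n$ from part (a) and the equality of $\beta_\pm$-limits from part (b), gives $\brrho_n/\rho_n \to 1/\beta_\pm$ at $\pm\infty$; Cesaro averaging of this pointwise limit then yields $a_\pm = a/\beta_\pm$. Finally $\fq_n$, being quadratic in increments of $\fm$, scales by $\mu_\pm^2 = \beta_\pm^2$, so $\brrho_n\bar{\fq}_n/(\rho_n\fq_n) \to \beta_\pm$, yielding $b_\pm = b\beta_\pm$.

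The hard part will be part (c). The scaling heuristics above are clean, but making them rigorous requires (i) a compatible choice of the normalization constants $c_0$ and $\bar c_0$ so that the ratio argument goes through, (ii) quantitative control of $\bru_n + \mu_\pm\one$ at an $o(1)$ level, which needs one more iteration of the contracting recursion for $\bru_n$ beyond the qualitative limit, and (iii) passing from pointwise convergence $\brrho_n/\rho_n \to 1/\beta_\pm$ to convergence of the Cesaro averages in \eqref{AvOccPM} and \eqref{AvQVPM}, which entails ruling out systematic bias from the tails of the approximation. By contrast, parts (a) and (b) will be essentially routine consequences of the uniform contraction in \eqref{prodZeta}--\eqref{v_n1} and the summability of the perturbation ensured by $\kappa>1$.
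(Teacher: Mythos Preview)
Your plan for part (a) and for the existence of the limits in part (b) is essentially the paper's argument: the paper packages the recursion $\|D_n\|\le \theta\|D_{n-1}\|+C/(|n|^\kappa+1)$ into an abstract contraction lemma (Proposition~\ref{PrContrNS}) and applies it to the maps defining $\zeta_n$, $v_n$, $l_n$, and the product formula $\bar\beta_n/\beta_n=\prod(\bar\lambda_j/\lambda_j)$ is exactly your telescoping sum.

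There are two genuine gaps, however. First, your route to the \emph{equality} of the two limits in \eqref{LimBeta} does not close. The identity $Q_{n+1}A_n=\alpha_n Q_n$ indeed gives $\tilde\beta_n/\beta_n=\mathrm{const}\cdot l_nQ_nv_{n-1}$, and part (a) shows that the perturbed and unperturbed versions of $l_nQ_nv_{n-1}$ agree asymptotically; but this only yields $\lim(\overline{\tilde\beta}_n/\tilde\beta_n)=K\cdot\lim(\bar\beta_n/\beta_n)$ for a constant $K$ depending on the free normalizations of $\beta_0,\tilde\beta_0,\bar\beta_0,\overline{\tilde\beta}_0$. To force $K=1$ one needs a relation tying the normalizations of $\rho$ and $\fm$ together, and the paper uses precisely \eqref{RhoMNorm}: since $\rho_nP_n(\fm_{n+1}-\zeta_{n+1}^-\fm_n)=\frac{1}{2m}$ holds for both walks, the ratio is identically $1$, while the formulae $\rho_n\propto l_n/\tilde\beta_n$ and $\fm_{n+1}-\zeta_{n+1}^-\fm_n\propto \beta_n$ show this ratio is asymptotically $\beta_\pm/\tilde\beta_\pm$.

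Second, the claim ``$u_n\to-\one$ at both infinities'' in your part (c) is false in general. From $\fm_n=n\one+e_n$ with $e_n=o(n)$ you only get $u_n=-\one+(e_n-\zeta_n e_{n+1})$, and the remainder need not vanish: since $u_n=c_0(\beta_n/\beta_0)v_n$ and $\beta_n$ fluctuates without limit in, say, i.i.d.\ environments with bounded potential, $u_n$ cannot converge pointwise. What the paper does instead is invoke the explicit formula \eqref{RhoDelta} from \cite{DG4}, namely $\Delta_n=\beta_n\sigma_n v_n$ plus a bounded coboundary, and $\rho_n=cl_n/\tilde\beta_n$; combined with part (a) and the limits $\bar\beta_n/\beta_n\to\beta_\pm$, these give $\bar\Delta_n-\beta_\pm\Delta_n\to 0$ and $\bar\rho_n-\rho_n/\beta_\pm\to 0$ in a form that passes directly to the Ces\`aro averages defining $\mu_\pm,a_\pm,b_\pm$, with the normalization constants again pinned down by \eqref{RhoMNorm}. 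Your Ces\`aro idea for $a_\pm$ and the quadratic scaling for $b_\pm$ are correct once this is in place, but the pointwise argument for $\mu_\pm$ needs to be replaced.
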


For random walks on $\integers$ the lemma follows easily from the explicit expressions for
the objects involved. Namely
\begin{equation}
\label{MatZ}
 A_n=\frac{q_n}{p_n}=\lambda_n=\frac{\beta_{n+1}}{\beta_{n}}, \quad \text{and} \quad
\Delta_n=\beta_n,\quad
\rho_{n}=\frac{1}{\beta_n q_n}, \quad \fq_n=\beta_{n+1}\beta_n
\end{equation}
(see \cite[Section 5]{DG4}).
The case of the strip is more complicated and will be considered in Appendix \ref{AppReg}.

 Combining Theorem \ref{ThSkewGen} with Lemma \ref{LmPertMatr} we obtain the following result.
\begin{corollary}
\label{CrSkew}
Let $\brxi(t)=(\brX(t), \brY(t))$ denote the walk in the perturbed environment~$\overline{\fP}.$
$$ \frac{\brX(tN)}{\sqrt{N}}\Rightarrow \cB_{\fp, \; D} $$
where $D$ is the limiting variance of the walk in the unperturbed environment and
\begin{equation}
\fp=\frac{\beta_+}{\beta_++\beta_-}.
\end{equation}
\end{corollary}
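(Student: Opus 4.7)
The plan is to derive Corollary \ref{CrSkew} as an essentially immediate combination of Theorem \ref{ThSkewGen} with Lemma \ref{LmPertMatr}. First, I would invoke Lemma \ref{LmPertMatr}(c) to verify that the perturbed walk $\brxi$ satisfies the one-sided asymptotic hypotheses \eqref{XMNormPM}, \eqref{AvOccPM}, \eqref{AvQVPM} of Theorem \ref{ThSkewGen}, with
\[
\mu_\pm = \beta_\pm, \qquad a_\pm = a/\beta_\pm, \qquad b_\pm = b\,\beta_\pm,
\]
where $a$ and $b$ are the two-sided averages \eqref{AvOcc}, \eqref{AvQV} for the unperturbed environment. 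Along the way, I would note that the ellipticity \eqref{EqC2*} and the bounded potential condition \eqref{BPNorm} for $\bar\fP$ follow from the corresponding properties of $\fP$: since the perturbation is $\cO(|n|^{-\kappa})$ with $\kappa>1$ it is harmless for $|n|$ large, while for any fixed finite window it does not affect the asymptotic limits in \eqref{XMNormPM}--\eqref{AvQVPM}.

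Next, I would compute the one-sided diffusivities $D_\pm = b_\pm/a_\pm = b\beta_\pm^2/a$ and feed them into Theorem \ref{ThSkewGen}, obtaining weak convergence of $\brX(tN)/\sqrt{N}$ to $\cW_{\gamma,\theta,D}$ with
\[
\gamma = \frac{D_-}{D_+} = \left(\frac{\beta_-}{\beta_+}\right)^{\!2}, \qquad
\theta = \frac{\mu_-}{\mu_+} = \frac{\beta_-}{\beta_+}, \qquad
D = \frac{D_+}{\mu_+^2} = \frac{b}{a}.
\]
Note that $D=b/a$ is precisely the limiting variance of the unperturbed walk furnished by Theorem \ref{ThCLTWalk}, matching the statement of the corollary.

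The final step is a purely algebraic identification. Since $\gamma = \theta^2$, the parameters lie on the curve \eqref{GammaThetaSBM} defining the skew Brownian motion: writing $\theta = (1-\fp)/\fp$ and solving with the help of \eqref{ParSkew} yields $\fp = 1/(1+\theta) = \beta_+/(\beta_+ + \beta_-)$. Consequently $\cW_{\gamma,\theta,D}$ coincides in law with $\cB_{\fp, D}$ for this value of $\fp$, which is exactly the asserted limit.

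I do not anticipate a genuine obstacle here, since the bulk of the technical work is packaged in Lemma \ref{LmPertMatr} (proved in Appendix \ref{AppReg}) and Theorem \ref{ThSkewGen}. The one subtlety that deserves care is normalization: one must check that the martingale $\bar\fm$ and the invariant measure $\brrho$ for the perturbed walk are normalized in the sense of \eqref{XMNorm}--\eqref{RhoMNorm} so that the constants $\mu_\pm, a_\pm, b_\pm$ coming out of Lemma \ref{LmPertMatr}(c) are precisely the ones required in Theorem \ref{ThSkewGen}. Part (b) of Lemma \ref{LmPertMatr}, which furnishes the limits $\beta_\pm = \lim_{n\to\pm\infty} \bar\beta_n/\beta_n = \lim_{n\to\pm\infty} \overline{\tbeta}_n/\tbeta_n$ in terms of the unperturbed sequences, is exactly what makes this matching clean and canonical.
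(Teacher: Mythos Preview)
Your proposal is correct and follows exactly the approach the paper takes: the paper simply states ``Combining Theorem \ref{ThSkewGen} with Lemma \ref{LmPertMatr} we obtain the following result'' and gives no further argument, so you have in fact supplied the algebraic verification (that $\gamma=\theta^2$ and $\fp=1/(1+\theta)=\beta_+/(\beta_++\beta_-)$) which the paper leaves implicit. Your remark about normalization is also apt and is precisely the content of the Remark the paper places immediately after the corollary.
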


\begin{remark}
Note that \eqref{DefBetaTBeta} does not define $\beta_n$ and $\tbeta_n$ uniquely. Namely, if we replace
$\beta_n$ by $c\beta_n$ and $\tbeta_n$ by $\tc\beta_n$ for any constants $c, \tc$ then
\eqref{DefBetaTBeta} remains valid. In this case $\beta_\pm$ get replaced by $\frac{\tc}{c} \beta_\pm$
but expression of $\fp$ does not depend on the arbitrariness involved in the choice of $c$ and $\tc.$
\end{remark}

For random walks on $\integers$ using the explicit expression for $\lambda_n$ in terms of $p_n$ and $q_n$ (see \eqref{MatZ})
we obtain
\begin{equation}
\label{UpsilonGen}
 \fp=\frac{\bupsilon}{\bupsilon+1}\quad\text{where}\quad
\bupsilon=\prod_{n=-\infty}^\infty \left(\frac{\tq_n p_n}{\tp_n q_n}\right) .
\end{equation}

\section{Semilocal Limit Theorem}
\label{ScSemiLoc}
We say that $X_N$ satisfies the semilocal limit theorem at the scale $L_N$ with \\$1\ll L_N\ll \sqrt{N}$ if
there exists a constant $\beta>0$ such that for each interval $I$ of length $L_N,$  for each $(x,y)$ with $|x|\leq N$
\begin{equation}
\label{SemiLocCLT}
\Prob(X_N-x\in I|\xi(0)=(x,y))=\Prob\left(\sqrt{DN} \cN\in I\right)+\cO\left(\frac{L_N^{1-\beta}}{\sqrt{N}}\right),
\end{equation}
where $\cN$ is the standard normal random variable and $D$ is a positive number (in our case $D$ comes from
Theorem \ref{ThCLTWalk}).

Clearly if for each $(x,y)$ with $|x|\leq N$ we have
$$ \sup_z \left|\Prob\left(\frac{X_N-x}{\sqrt{DN}} \leq z\Big|\xi(0)=(x,y)\right)-\Phi(z)\right|\leq N^{-\upsilon}
$$
then $X$ satisfies the semilocal limit theorem at the scale $N^\gamma$ for each
$\gamma>\frac{1}{2}-\upsilon.$
The next lemma allows us to decrease the scale in the semilocal limit theorem.

\begin{lemma}
\label{LmBoot}
Let $\eps, \eps_1<\eps_2$ be small positive constants.
If $N$ is sufficiently large and for each $\tN$ such that
$N^\eps \leq \tN\leq N,$
for each $(x,y)$ such that
$|x|\leq N(1+\eps_2),$  for each interval $I$ of length $L=\tN^\gamma$
where
$$\gamma<\frac{1}{2} \quad\text{and}\quad\gamma\left(\frac{1}{2}+\gamma\right)>\eps $$
we have
\begin{equation}
\label{BootTN}
\Prob(X_\tN-x\in I|\xi(0)=(x,y))=\mathbf{P}\left(\sqrt{D\tN} \cN\in I\right)+\cO\left(\frac{L^{1-\beta}}{\sqrt{\tN}}\right)
\end{equation}
then \eqref{SemiLocCLT} holds for all $(x,y)$ with $|x|<(1+\eps_1) N$ and $L_N=N^{(\frac{1}{2}+\gamma)\gamma}.$
\end{lemma}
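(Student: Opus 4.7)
The plan is a Markov bootstrap at an intermediate time. Fix an interval $I = [a, a+L_N]$ and a starting point $(x,y)$ with $|x| \le (1+\eps_1) N$, and set $M = \lfloor N^{1/2+\gamma}\rfloor$ so that $M^\gamma = L_N$. Since $\gamma < 1/2$ we have $M \ll N$, and since $1/2 + \gamma > \eps$ we have $M \ge N^\eps$; both $M$ and $N-M$ therefore lie in the range where the hypothesis \eqref{BootTN} applies. The idea is to use \eqref{BootTN} at time $M$ with intervals of length $L_N$ to \emph{shrink} the target interval, and at time $N - M$ with intervals of length $L := (N-M)^\gamma \sim N^\gamma$ to control the larger-scale displacement, exploiting that $L \gg L_N$ so the smoothing at scale $L_N$ is finer than the available partition.

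By the Markov property at time $N - M$,
\[
\Prob(X_N - x \in I \,|\, \xi_0 = (x,y)) = \sum_{z} p_{N-M}(z) \, \Prob(X_M - z_1 \in I + x - z_1 \,|\, \xi_0 = z),
\]
where $p_{N-M}(z) = \Prob(\xi_{N-M} = z \,|\, \xi_0 = (x,y))$. Contributions from $|z_1 - x| > \tau N$ with $\tau < \eps_2 - \eps_1$ are exponentially negligible by Azuma's inequality applied to the martingale $\fm(\xi_t)$ with bounded increments \eqref{BndInc}; on the bulk $|z_1| \le (1 + \eps_2) N$ the hypothesis at $\tN = M$ gives
\[
\Prob(X_M - z_1 \in [v, v+L_N] \,|\, \xi_0 = z) = \phi(v) + \cO\!\left(L_N^{1-\beta}/\sqrt{M}\right), \quad \phi(v) := \bP(\sqrt{DM}\, \cN \in [v, v+L_N]).
\]
Summing in $z$ yields
\[
\Prob(X_N - x \in I \,|\, \xi_0 = (x,y)) = \EXP\!\left[\phi\!\left(a - (X_{N-M}-x)\right) \,|\, \xi_0 = (x,y)\right] + \cO\!\left(L_N^{1-\beta}/\sqrt{M}\right).
\]
Next I replace the outer expectation by $\bE[\phi(a - \sqrt{D(N-M)}\,\cN)]$: partition $\reals$ into intervals $\{J_i\}$ of length $L$, apply \eqref{BootTN} at $\tN = N - M$ on each $J_i$, and use $\|\phi'\|_\infty \le C L_N / M$ (coming from the slow variation of the Gaussian density of variance $DM$ over the window $L_N$) to replace $\phi$ by its value on each $J_i$. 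The $\cO(\sqrt{M\log M}/L)$ intervals in the effective support of $\phi$ contribute a cumulative error $\cO(L_N^{1-\beta''}/\sqrt{N})$ for some $\beta'' > 0$. Finally, the convolution of independent Gaussians with variances $DM$ and $D(N-M)$ yields
\[
\bE\!\left[\phi\!\left(a - \sqrt{D(N-M)}\,\cN\right)\right] = \bP(\sqrt{DN}\, \cN \in [a, a+L_N]),
\]
which is the Gaussian main term in \eqref{SemiLocCLT}.

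The principal obstacle is the error bookkeeping. Three errors --- the propagated short-time hypothesis error $\cO(L_N^{1-\beta}/\sqrt{M})$, the cumulative long-time hypothesis error over the partition $\{J_i\}$, and the smoothing error from replacing $\phi$ by a piecewise-constant approximant --- must all combine to beat the target $\cO(L_N^{1-\beta''}/\sqrt{N})$ with strictly positive $\beta''$. Achieving this balance forces the choice $M = N^{1/2+\gamma}$ and explains the hypotheses $\gamma < 1/2$ (so $L \gg L_N$ and the smoothing partition is coarse enough) and $\gamma(1/2+\gamma) > \eps$ (so $L_N \gg N^\eps$ and the short-time hypothesis remains nontrivial). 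In the intended applications the lemma is iterated, seeded by the Berry--Esseen rate from Theorem~\ref{ThCLTRate}, and a careful arithmetic of the exponents is what ensures the resulting $\beta''$ remains positive at each stage of the bootstrap.
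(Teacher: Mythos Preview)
Your proposal is correct and follows essentially the same route as the paper's proof: split time as $N = (N-M) + M$ with $M = N^{1/2+\gamma}$ (the paper writes $s=\tfrac12+\gamma$, $N_2=N^s$, $N_1=N-N_2$), apply the hypothesis \eqref{BootTN} at the long time $N-M$ with a partition into intervals of length $N^\gamma$ and at the short time $M$ with the target interval of length $L_N=M^\gamma$, discard far-away intermediate positions via Azuma, and recombine via Gaussian convolution (the paper phrases this last step as a Riemann sum). The only cosmetic difference is that the paper organizes the computation as a double sum over the partition blocks $I_p$ with an explicit product of two Gaussian probabilities, whereas you package the short-time step into the function $\phi$ and then estimate $\EXP[\phi(a-(X_{N-M}-x))]$; the error accounting is equivalent.
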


Applying this lemma several times we obtain the following
\begin{corollary}
\label{CrBoot}
  Suppose that there exits $\gamma<\frac{1}{2}$ such that for each $\eps$ there are constants $\eps_1, \eps_2, N_0$
  such that the conditions
of Lemma \ref{LmBoot} are satisfied for $N\geq N_0.$ Then,
for arbitrarily small $\tgamma>0,$ $X$ satisfies the semilocal limit theorem at scale
$N^\tgamma.$
\end{corollary}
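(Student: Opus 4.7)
The plan is to iterate Lemma~\ref{LmBoot}. Define the sequence of exponents $\gamma_0=\gamma$ and $\gamma_{k+1}=(\tfrac{1}{2}+\gamma_k)\gamma_k$. Since $\gamma_k<\tfrac{1}{2}$ implies $\tfrac{1}{2}+\gamma_k<1$, the sequence is strictly decreasing and bounded below by $0$, so it has a limit $\gamma_\infty$ satisfying $\gamma_\infty=(\tfrac{1}{2}+\gamma_\infty)\gamma_\infty$, forcing $\gamma_\infty=0$. Given any $\tgamma>0$, pick $K$ with $\gamma_K<\tgamma$. The idea is then to apply Lemma~\ref{LmBoot} $K$ times in succession, each time trading the current semilocal scale $N^{\gamma_k}$ for the finer scale $N^{\gamma_{k+1}}$.

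More concretely, I would proceed by induction on $k$, establishing the following statement $(S_k)$: there exist $\eta_k>0$ and $N_k$ such that for all $N\ge N_k$, all $(x,y)$ with $|x|\le(1+\eta_k)N$, and all intervals $I$ of length $N^{\gamma_k}$, the estimate \eqref{SemiLocCLT} holds. The base case $(S_0)$ is the hypothesis of the corollary (applied with $\eps$ small enough that $\gamma(\tfrac{1}{2}+\gamma)>\eps$, which gives the input of Lemma~\ref{LmBoot} and in particular holds at every scale $\tN\ge N_0$, hence certainly for all $(x,y)$ with $|x|\le(1+\eta_0)N$ for a suitable $\eta_0>0$). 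For the inductive step, suppose $(S_k)$ holds. Pick any $\eps^{(k+1)}<\gamma_k(\tfrac{1}{2}+\gamma_k)$, and apply Lemma~\ref{LmBoot} with this $\eps$, with the role of $\gamma$ played by $\gamma_k$, and with target starting-point parameter $\eps_1=\eta_{k+1}$ chosen to be any positive number strictly less than $\eta_k$; the required input parameter $\eps_2$ from the lemma can then be taken equal to $\eta_k$, which is compatible with the SLLT output at scale $\tN^{\gamma_k}$ furnished by $(S_k)$ applied at scales $\tN\in[N^{\eps^{(k+1)}},N]$. The conclusion of the lemma is precisely $(S_{k+1})$ at scale $N^{\gamma_{k+1}}$, with starting-point range $(1+\eta_{k+1})N$.

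After $K$ iterations, $(S_K)$ yields the semilocal limit theorem at scale $N^{\gamma_K}\le N^{\tgamma}$, as desired. The only bookkeeping required is to choose the decreasing sequence $\eta_0>\eta_1>\cdots>\eta_K>0$ in advance (for instance, $\eta_k=\eta_0/2^k$) so that the starting-point ranges nest correctly, and to pick at each stage a sufficiently small $\eps^{(k+1)}$ so that the lemma applies; both are possible because $\gamma_k>0$ for every finite $k$.

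The main subtlety in executing this plan is the inductive transfer of the SLLT from one scale to the next: one must verify that the hypothesis of Lemma~\ref{LmBoot} at step $k+1$ (requiring the SLLT at scale $\tN^{\gamma_k}$ for the full range $\tN\in[N^{\eps^{(k+1)}},N]$ and for starting points in $|x|\le(1+\eta_k)N$, \emph{not} merely $|x|\le(1+\eta_k)\tN$) is indeed supplied by $(S_k)$. This works because $(S_k)$, being uniform in $N\ge N_k$, can be reapplied with $\tN$ in place of $N$, and the bound $|x|\le(1+\eta_k)N$ is weaker than $|x|\le(1+\eta_k)\tN$ only when $\tN<N$, in which case the relevant starting points still lie within the range $|x|\le(1+\eta_k)N\cdot(N/\tN)$; one must therefore check (or, if necessary, strengthen $\eta_k$ at the expense of a slightly earlier $N_k$) that the conditions of the lemma are met for the full range of $\tN$. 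Modulo this parameter-tracking, the corollary follows by a finite induction.
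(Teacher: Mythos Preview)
Your approach—iterating Lemma~\ref{LmBoot} along the sequence $\gamma_{k+1}=(\tfrac12+\gamma_k)\gamma_k\searrow 0$—is exactly what the paper does (its proof is the one sentence ``Applying this lemma several times'').

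However, the subtlety you flag in your last paragraph is genuine and your attempted resolution does not work. Your $(S_k)$, applied at time $\tN$ in place of $N$, yields the SLLT only for $|x|\le(1+\eta_k)\tN$, whereas the input to Lemma~\ref{LmBoot} at step $k+1$ requires it for $|x|\le(1+\eps_2)N$ with $N$ the \emph{outer} scale; when $\tN=N^{\eps}$ the deficit is a factor $N^{1-\eps}$, which cannot be absorbed by shrinking $\eta_k$. Your sentence ``the relevant starting points still lie within the range $|x|\le(1+\eta_k)N\cdot(N/\tN)$'' has the direction of the inclusion backwards.

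The fix is to exploit that the hypothesis of the corollary is uniform in $\eps$: applying it with outer scale $M=\tN^{1/\eps}$ (and $\tN$ as the inner variable) shows that the SLLT at scale $\tN^{\gamma}$ in fact holds for $|x|\le(1+\eps_2)\tN^{1/\eps}$, i.e.\ for starting points in a range $|x|\le\tN^{r}$ with $r$ as large as one likes. Separately, the proof of Lemma~\ref{LmBoot} uses only the relation $\eps_1<\eps_2$, not their smallness, so its conclusion propagates the same arbitrarily large starting-point range. Hence strengthen your inductive hypothesis to ``$(S_k)$: for every $r\ge1$ there is $N_k(r)$ such that for $N\ge N_k(r)$ the SLLT at scale $N^{\gamma_k}$ holds for all $|x|\le N^{r}$''; then $(S_0)$ follows from the hypothesis as just explained, and the inductive step is a direct application of Lemma~\ref{LmBoot} (with $\eps_1<\eps_2$ taken large if you prefer to keep the lemma exactly as stated). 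With this reformulation the induction closes without a gap.
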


\begin{proof}[Proof of Lemma \ref{LmBoot}]
Throughout this proof we fix $(x,y)$ and let $\hat\Prob$ denote the distribution of $\xi$ under the condition that
$\xi(0)=(x,y).$

Let $s=\frac{1}{2}+\gamma.$ Note that $s<1.$
Consider  an interval $I$ of length $N^{\gamma s}.$ Let $N_1=N-N^s,$ $N_2=N^s.$
Divide $\integers$ into intervals $I_p$ of size $N^\gamma.$ Let $\brx$ be the center of $I$ and $x_p$ be the
centers of $I_p.$ Call $p$ {\em feasible} if
$$ |x_p-x|\leq N^{1/2+\eps} \quad \text{and} \quad |\brx-x_p|\leq N_2^{1/2+\eps}. $$
By the Azuma inequality, if $p$ is not feasible, then
$$ \hat\Prob(X_{N_1}\in I_p, \quad X_{N}\in I)\leq \exp(-N_2^{2\eps}). $$
Accordingly
\begin{equation}
\label{2Scales}
 \hat\Prob(X_N\in I)=\sum_{p-feasible} \hat\Prob(X_{N_1}\in I_p)\, \hat\Prob(X_N\in I|X_{N_1}\in I_p)+
\cO\left(e^{-N_2^{2\eps}}\right) .
\end{equation}
By \eqref{BootTN} each individual term in this sum is
$$\frac{N^\gamma}{\sqrt{2\pi DN_1}} e^{-(x_p-x)^2/(2D N_1)}
\times
\frac{N^{\gamma s}}{\sqrt{2\pi DN_2}} e^{-(\brx-x_p)^2/(2D N_2)}
+\cO\left(N^{(\gamma-\frac{1}{2})(1+s)-\beta s}\right). $$
Since $p$ is feasible we can replace
$$ e^{-(x_p-x)^2/(2D N_1)} \quad\text{by} \quad e^{-(\brx-x)^2/(2D N_1)}  $$
with an error of order $\cO(N^{-\eps}).$
Accordingly the main contribution to \eqref{2Scales} comes from
$$\frac{N^{\gamma s}}{\sqrt{2\pi DN_1}} e^{-(x-\brx)^2/(2D N_1)}
\sum_p
\frac{N^{\gamma }}{\sqrt{2\pi DN_2}} e^{-(\brx-x_p)^2/(2D N_2)}$$
$$=
\frac{N^{\gamma s}}{\sqrt{2\pi DN_1}} e^{-(\brx-x)^2/(2D N_1)}
\left[\int_{-\infty}^{\infty} \frac{1}{\sqrt{2\pi D}} e^{-(\brx-z)^2/(2D)}\; dz+\cO\left(\frac{N^\gamma}{\sqrt{N_2}}
  \right)\right]$$
$$=
\frac{N^{\gamma s}}{\sqrt{2\pi DN_1}} e^{-(\brx-x)^2/(2D N_1)}
\left[1+\cO\left(N^{\frac{\gamma-1/2}{2}}\right)\right]
$$
where the first equality is obtained by replacing the Riemann sum with step $\hbar=\frac{N^\gamma}{\sqrt{N_2}}$
with the Riemann integral with mistake
$\cO(\hbar).$ The result follows.
\end{proof}

\section{Environment viewed by the particle: Mixing.}
\label{ScEnvMix}
\subsection{General result.}
Here we provide sufficient conditions for mixing of the environment viewed by the particle process.
Namely we assume that  there is a sequence $\delta_N$
converging to $0$ as $N\to\infty$, such that for each $\eps, K$ there exists $N_0$ such that for $N\geq N_0$
for each $k$ with $|k|\leq K \sqrt{N}$
\begin{equation}
\label{UnifConv2}
\left|\frac{1}{2 \delta_N N^{1/4}}\sum_{j=k-\delta_N N^{1/4}}^{k+\delta_N N^{1/4}}\rho_j\one
-a\right|\leq \eps.
\end{equation}
We consider functions $h:\bbS\to\reals$ satisfying \eqref{UnifConv1}.

\begin{theorem}
\label{ThEnvMix}
If \eqref{UnifConv1},
\eqref{CanStay} and \eqref{UnifConv2} hold then
$\EXP(h(\xi(N)))\to \frac{\fh}{a}$ as $N\to\infty.$
\end{theorem}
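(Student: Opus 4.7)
The plan is to partition the $x$-axis into blocks of length $L_N = \delta_N N^{1/4}$ with $\delta_N$ as in \eqref{UnifConv1}, show that on each bulk block the distribution of $\xi(N)$ is asymptotically proportional to $\rho$ restricted to the block, and then invoke \eqref{UnifConv1} and \eqref{UnifConv2} to identify the resulting ratio with $\fh/a$.

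Let $J_p$ denote the consecutive intervals of length $L_N$ centered at $k_p = pL_N$. The CLT of Theorem~\ref{ThCLTWalk} combined with boundedness of $h$ allows me, for any $\eps>0$, to restrict
$$\EXP(h(\xi(N))) = \sum_p \sum_{k\in J_p,\,y}\Prob(\xi(N)=(k,y))\,h(k,y)$$
to the ``bulk'' blocks $|k_p|\le K\sqrt{N}$ with $K=K(\eps)$, at the cost of an $\cO(\eps)$ error. The theorem then reduces to proving, uniformly over bulk blocks,
$$\sum_{k\in J_p,\,y} \Prob(\xi(N)=(k,y))\,h(k,y) = \Prob(X(N)\in J_p)\cdot\frac{\sum_{k\in J_p}\rho_k h_k}{\sum_{k\in J_p}\rho_k\one}\cdot(1+o(1)),$$
because then the displayed ratio equals $\fh/a+o(1)$ by \eqref{UnifConv1} and \eqref{UnifConv2}, and $\sum_p \Prob(X(N)\in J_p) = 1-\cO(\eps)$.

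The displayed identity is a local mixing statement: conditional on $X(N)\in J_p$, the law of $\xi(N)$ on $J_p\times\{1,\dots,m\}$ is asymptotically proportional to $\rho_k(y)$. I would establish it via a two-scale Markov decomposition. Fix $s_N = L_N^2(\ln N)^2$, so that $s_N \ll N$ but $\sqrt{s_N}\gg L_N$, and condition on $\xi(N-s_N)=(k_0,y_0)$. For each starting point with $|k_0-k_p|\lesssim \sqrt{s_N}\ln N$, the Green function asymptotics of Lemma~\ref{LmGF} applied on the mesoscopic interval of radius $\sqrt{s_N}\ln N$ around $k_p$ imply that during the remaining $s_N$ steps the walker makes $\gg L_N$ visits to each layer of $J_p$ with overwhelming probability. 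The return chain to any layer has invariant distribution $\rho_k/(\rho_k\one)$ (as used in the proof of Lemma~\ref{LmGF}), is aperiodic by \eqref{CanStay}, and mixes exponentially by \eqref{EqC2*}; combining exponential mixing with a large number of returns forces the conditional law of $Y(N)$ given $\xi(N-s_N)$, $X(N)=k$ and $X(N)\in J_p$ to be exponentially close to $\rho_k(\cdot)/(\rho_k\one)$. Averaging over $k\in J_p$ with weights given by the coarse (mesoscopic) distribution of $X(N)$ inside $J_p$ then yields the $\rho$-proportional law on the whole block.

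The main obstacle is this local mixing step, which demands gluing three pieces of information at different scales: the diffusive Gaussian behavior of $X(N-s_N)$ on scale $\sqrt{N}$, the Green-function-controlled behavior of the walk over the window $[N-s_N,N]$ on the mesoscopic scale $\sqrt{s_N}$, and the $\cO(1)$-scale exponential mixing of the return chain. In particular one must verify that the probability of the walker exiting the mesoscopic neighborhood before time $N$ is negligible, which is exactly the Green function bound of Lemma~\ref{LmGF} combined with the return-to-layer estimate \eqref{LE-Lower}. Once these scales are reconciled, the $o(1)$ error in the displayed identity follows and summation over bulk blocks completes the proof.
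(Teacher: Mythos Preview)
Your approach is genuinely different from the paper's, and it contains a gap at exactly the point you flag as the ``main obstacle.''

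The paper's proof does \emph{not} attempt to show that the law of $\xi(N)$ on a spatial block is proportional to $\rho$. Instead, it exploits \eqref{CanStay} in a much more direct way: split each trajectory $\xi$ into an \emph{accelerated path} $\txi$ (obtained by deleting all steps where the walker stays put) together with independent geometric waiting times $s(n)$. Given the accelerated path $W$ up to its halfway point, the traversal time $\tau(W,\xi)=\sum s(n)$ is a sum of independent geometrics, so the classical local limit theorem for such sums (not for the walk!) shows that $\tau(W,\xi)$ is approximately uniform on mesoscopic time intervals of length $\delta\sqrt{N}$. This converts $\EXP\bigl(h(\xi(N))\,\big|\,A_{W,\delta,j}\bigr)$ into a \emph{time average}
\[
\frac{1}{\delta\sqrt{N}}\sum_{l=1}^{\delta\sqrt{N}}
\EXP\bigl(h(\xi(l'))\,\big|\,\xi(0)=e(W)\bigr),
\]
and this time average is controlled by the uniform version of Lemma~\ref{LmEnvLLN} (which is where \eqref{UnifConv1} and \eqref{UnifConv2} enter). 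No spatial local limit theorem is needed.

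Your route, by contrast, requires the statement
\[
\Prob\bigl(\xi(N)=(k,y)\,\big|\,X(N)\in J_p\bigr)\;=\;\frac{\rho_k(y)}{\sum_{k'\in J_p}\rho_{k'}\one}\,(1+o(1)),
\]
which is essentially the Local Limit Theorem (Theorem~\ref{ThLLT}) averaged over a block. In the paper, Theorem~\ref{ThLLT} is proven \emph{after} and \emph{using} Theorem~\ref{ThEnvMix}, so you would be reversing the logical order. More concretely, your return-chain argument gives mixing of the $Y$-coordinate along the sequence of \emph{visit times} to a fixed layer, but it does not give the distribution of $\xi$ at the \emph{fixed deterministic time} $N$: the visit times $\tau_1<\tau_2<\cdots$ to layer $k$ are correlated with the $Y$-values (the return-time law from $(k,y)$ depends on $y$), so conditioning on $X(N)=k$ --- i.e.\ on one of the $\tau_j$ equalling $N$ --- can bias the $Y$-distribution. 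You also never identify the ``coarse mesoscopic distribution of $X(N)$ inside $J_p$''; showing it is proportional to $\rho_k\one$ is again an LLT-type statement. The missing idea that bridges visit-time mixing and fixed-time mixing is precisely the lazy-step time randomization that the paper uses.
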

\begin{proof}
If \eqref{UnifConv1}, and \eqref{UnifConv2}
hold then the argument of Section \ref{ScEnvLLN} shows that for each $\eps, \delta, K$
there exists $N_1$ such that for $N\geq N_1$
and for each $(k,y)\in\bbS$ such that $|k|\leq K\sqrt{N}$  we have
\begin{equation}
\label{UnifEnvLLN}
\Prob\left( \left|\frac{1}{\delta\sqrt{N}}\sum_{j=0}^{\delta \sqrt{N}-1} h(\xi(j))-\frac{\fh}{a}\right|>\eps
\;\Big|\;\xi(0)=(k,y)\right)<\frac{\eps}{||h||_\infty}.
\end{equation}
That is, the conclusion of Lemma \ref{LmEnvLLN} holds uniformly for initial conditions $(k,y)$
satisfying $|k|<K\sqrt{N}.$

Given a trajectory $\xi$ we denote by $\txi$ the accelerated trajectory which skips all steps where
$\xi$ stays at the same place. That is, $\txi(n)=\xi(t(n))$ where
$t(0)=0$ and for $n>0,$ $t(n)=\min(t>t(n-1): \xi(t)\neq \xi(t(n-1))).$
We denote by $s(n)=t(n+1)-t(n)$ the time the walker spends at $\txi(n).$

A {\em path} is a finite set of points $W=\{z_0, z_1\dots z_{l}\}$ such that $z_j\neq z_{j+1}$ for $j=0, \dots, l-1.$
The number $l=l(W)$ is called the {\em length of the path}.
A path is called {\em admissible} if there is an accelerated trajectory
$\txi$ such that $\txi(n)=z_n$ for $0\leq n\leq l(W).$
Given an admissible path $W$ and a trajectory $\xi$ following this path let $\tau(W,\xi)=t(l(W))$ be the number of steps
it takes $\xi$ to traverse this path.
Let $T(W)$ be the expectation of $\tau(W, \xi)$
conditioned on the event that $W$ is the beginning part of $\txi.$
Observe that
\begin{equation}
\label{SumGeom}
\tau(W, \xi)=\sum_{n=0}^{l-1} s(n)
\end{equation}
 where, for a fixed $W,$ $s(n)$ are independent random variables
having geometric distributions with parameter
$1-\mathfrak{P}(\txi(n), \txi(n)).$

Let $\cS(N)$ be the
set of (admissible) paths such that $T(W)\geq \frac{N}{2}$ but $T(W^-)<\frac{N}{2}$ where $W^-$ is the path obtained
by removing the last edge from $W$.
Given $W\in \cS(N),$ $\delta, j$ let
$$ A_{W,\delta, j}=\left\{\xi: W=\txi([0, l(W)])\text{ and } \tau(W, \xi)\in
\left[\frac{N}{2}+\delta j \sqrt{N}, \frac{N}{2}+\delta (j+1) \sqrt{N}\right)\right\}.$$
By the Central Limit Theorem for $\tau(W, \xi),$ (see \eqref{SumGeom}),
given $\eps>0$ we can find $R$ such that
$$\Prob\left(\bigcup_{W\in \cS(N)} \left\{\xi: \left|\tau(W, \xi)-\frac{N}{2}\right|>R\sqrt{N}\right\}\right)
 \leq \eps. $$
Accordingly
$$\EXP(h(\xi(N)))=\left[\sum_{W\in \cS(N)} \sum_{|j|\leq R}
\Prob(A_{W, \delta, j})\;\EXP\Big(h(\xi(N))\big|A_{W,\delta, j}\Big)\right]+\teps $$
where $\teps\leq (\sup |h|)\eps.$

We claim that
\begin{equation}
\label{ConstEps}
 \left|\sum_{W\in \cS(N)} \sum_{|j|\leq R/\delta}
\Prob(A_{W, \delta, j})\; \EXP\Big(h(\xi(N))|A_{W,\delta, j}\Big)-\frac{\fh}{a}\right|\leq 3\eps
\end{equation}
provided that $\delta$ is small enough.
Indeed
\begin{equation}
\label{CondDelay}
 \EXP\Big(h(\xi(N))|A_{W,\delta, j}\Big)=
 \sum_{l=1}^{\delta  \sqrt{N}}
\EXP \left(h\left(\xi\left(\frac{N}{2}-\delta (j+1) \sqrt{N}+l\right)\right)\big|\,\xi(0)=e(W)\right)
\end{equation}
$$\quad\quad\quad\quad\quad\quad\quad\quad\quad\quad
\times \Prob\left(\tau(W,\xi)=\frac{N}{2}+\delta (j+1)\sqrt{N}-l\,\big|\,A_{W,\delta, j}\right)
$$
where $e(W)=(x(W), y(W))$ is the endpoint of $W.$
By the Local Limit Theorem for the sum \eqref{SumGeom} (\cite{Pr, DMD})
\begin{equation}
\label{KeySpaceTme}
\left|\Prob\left(\tau(W,\xi)=\frac{N}{2}+\delta (j+1)\sqrt{N}-l\,\big|\,A_{W,\delta, j}\right)-\frac{1}{\delta\sqrt{N}}
\right|\leq \frac{\eps}{||h||_\infty}
\end{equation}
uniformly in $l\leq \delta\sqrt{N}$ provided that $\delta$ is small enough.
This allows us to replace $\EXP\Big(h(\xi(N))|A_{W,\delta, j}\Big)$
by
$$  \frac{1}{\delta\sqrt{N}} \sum_{l=1}^{\delta \sqrt{N}}
\EXP\left(h\left(\xi\left(b_{N,j,l}\right)\right)\,\big|\,\xi(0)=e(W)\right), $$
where
\begin{equation}
\label{DefBNJL}
  b_{N,j,l}=\frac{N}{2}-\delta (j+1) \sqrt{N}+l.
\end{equation}

To control this sum we 
consider two cases.

(I) The terms where $|x(W)|$ is large can be controlled as follows.
By Theorem \ref{ThCLTWalk}
$$  \frac{1}{\delta\sqrt{N}} \sum_{W\in \cS(N)} \sum_{|j|\leq R/\delta} \sum_{|x(W)|>K\sqrt{N}}
\sum_{l=1}^{\delta j \sqrt{N}}
\Prob(A_{W, \delta, j}) \Prob\left(\xi\left(b_{N,j,l}\right)=e(W)\Big|\,
\xi(0)=e(W)\right)
 $$
$$\leq \Prob\left(|\xi(N)|>\frac{K\sqrt{N}}{2}\right)\leq \frac{\eps}{||h||_\infty} $$
provided that $K$ is sufficiently large and $N\geq N_2(K).$

(II) On the other hand if $|x(W)|\leq K \sqrt{N}$ then in view of \eqref{UnifEnvLLN}
$$  \left|\frac{1}{\delta \sqrt{N}} \sum_{l=1}^{\delta \sqrt{N}}
\EXP\left(h\left(\xi\left(b_{N,j,l}\right)\right)
\Big|\,\xi\left(0\right)=e(W)\right)-
\frac{\fh}{a}\right|\leq \eps $$
provided that $N$ is large enough.

Combining the estimates for the cases (I) and (II) above with
\eqref{KeySpaceTme}  we obtain  \eqref{ConstEps}.
Since $\eps$ is arbitrary Theorem \ref{ThEnvMix} follows.
\end{proof}

\subsection{Examples.}
Examples presented in Section \ref{ScEx} also satisfy \eqref{UnifConv1} and \eqref{UnifConv2}.

In fact, in Example \ref{ExQP} we can replace quasiperiodic environments by more
general environments generated by uniquely ergodic transformation (we refer the reader to \cite[\S 1.8]{CFS}
(for background on uniquely ergodic transformations).
That is, let $T$ be a uniquely ergodic transformation
of a compact metric space $\Omega,$ $(P, Q, R)_n(\omega)=(\cP, \cQ, \cR)(T^n \omega)$
and $h_n(\omega)=\cH(T^n \omega).$ 

\begin{proposition}
\label{PrUC-QP}
If $(\cP, \cQ, \cR)$ and $\cH$ are continuous then \eqref{UnifConv1} and \eqref{UnifConv2} hold.
\end{proposition}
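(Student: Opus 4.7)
The plan is to reduce both \eqref{UnifConv1} and \eqref{UnifConv2} to the uniform ergodic theorem on the compact uniquely ergodic system $(\Omega, T, \mu)$, where $\mu$ is the unique $T$-invariant Borel probability measure. Two ingredients are needed: (i) realize $\rho_l \one$ and $\rho_l h_l$ as Birkhoff sums of continuous functions on $\Omega$; (ii) choose a scale $L_N = \delta_N N^{1/4}$ that grows to infinity while $\delta_N \to 0$, for which the concrete choice $\delta_N = N^{-1/8}$ will do.

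The first ingredient requires showing that $\omega \mapsto \rho_0(\omega)$ is continuous on $\Omega$. I would derive this from the construction in Section~\ref{ScMatrices}: by the contraction estimate \eqref{prodZeta}, $\zeta_n(\omega)$ is a uniform limit of the continuous matrices $\psi_{n,a}(\omega)$ and is therefore itself continuous; the same mechanism, applied via \eqref{v_n1} to $A_n$, $\alpha_n$, $v_n$ and $l_n$, together with the defining relations of \S\ref{SSBP} for $\fm_n$ and $\rho_n$, yields continuity of $\rho_0$. (The analogous statements for the quasiperiodic and independent settings are recorded in Appendix~\ref{AppReg} as Lemmas~\ref{LmSmooth} and \ref{LmHolder}.) Using the shift-covariance $\rho_n(\omega) = \rho_0(T^n\omega)$ together with the assumed identity $h_n(\omega) = \cH(T^n\omega)$, I would then introduce
$\Phi(\omega) = \rho_0(\omega)\one$ and $\Psi(\omega) = \rho_0(\omega)\cdot \cH(\omega)$, both continuous on the compact space $\Omega$.

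With these in hand, the uniform ergodic theorem for uniquely ergodic systems asserts that for every continuous $\phi: \Omega \to \reals$,
\[
\sup_{\omega' \in \Omega} \left| \frac{1}{M}\sum_{n=0}^{M-1}\phi(T^n \omega') - \int_\Omega \phi \, d\mu \right| \longrightarrow 0 \quad \text{as $M \to \infty$.}
\]
Applying this with $\phi = \Phi$, starting point $\omega' = T^{k-L_N}\omega$, and $M = 2L_N + 1$ gives \eqref{UnifConv2}; the uniformity in $k$ with $|k| \leq K\sqrt{N}$ is automatic from the supremum over $\omega'$. The identification $a = \int \Phi \, d\mu$ follows from the standard ergodic theorem applied to \eqref{AvOcc}. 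Exactly the same argument with $\phi = \Psi$ delivers \eqref{UnifConv1} with $\fh = \int \Psi \, d\mu$.

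The only real obstacle is the continuity claim for $\rho$: once this regularity is established, the passage from pointwise to uniform convergence costs nothing, because unique ergodicity on a compact space automatically upgrades Birkhoff convergence to uniform convergence for continuous observables. The scale $\delta_N = N^{-1/8}$ then satisfies $L_N = N^{1/8} \to \infty$ while $\delta_N \to 0$, completing the verification of both hypotheses and hence the proof.
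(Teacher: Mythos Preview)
Your proposal is correct and follows essentially the same route as the paper: establish that $\omega\mapsto\rho_0(\omega)$ is continuous on the compact space $\Omega$, then invoke the uniform ergodic theorem for uniquely ergodic systems to upgrade Birkhoff convergence to uniform convergence for the continuous observables $\Phi=\rho_0\one$ and $\Psi=\rho_0\cdot\cH$. The paper's own proof simply cites \cite{DG4} for the continuity of $\brho$ (see also the beginning of Appendix~\ref{AppReg}, where $\brho(\omega)=c\,l(\omega)/\tbeta(\omega)$ and continuity of $l,\tbeta$ is recorded), whereas you sketch the derivation from the contraction estimates of Section~\ref{ScMatrices}; both justifications are valid and the remainder of the argument is identical.
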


\begin{proof}
By Section 6 and Appendix A of \cite{DG4}, $\rho_n=\brho(T^n \omega),$ where
$\brho:X\to\reals^m$ is continuous. Therefore  \eqref{UnifConv1} and \eqref{UnifConv2} follow from the
fact that the convergence in ergodic theorem for uniquely ergodic systems is uniform with respect to $\omega$
(\cite[Theorem 1.8.2]{CFS}).
\end{proof}

In the case of  independent environments we suppose that  $h_n=\cH(P_n, Q_n, R_n)$ where $\cH$ is a bounded
continuous function.

\begin{proposition}
\label{PrUC-Ind}
\eqref{UnifConv1} and \eqref{UnifConv2} hold for independent environments.
\end{proposition}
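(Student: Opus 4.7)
The plan is to follow the template of Proposition \ref{PrMIM-Ind} and deduce both \eqref{UnifConv1} and \eqref{UnifConv2} from Gaposhkin's large deviation inequality \cite{G} applied to the weakly dependent sequences $\{\rho_n \one\}$ and $\{\rho_n h_n\}$. Recall from Lemma \ref{LmHolder} that $\rho_n = \brho(T^n \omega)$ with $\brho$ H\"older continuous in the metric $\bd$. In the i.i.d.\ setting, $h_n = \cH(P_n, Q_n, R_n)$ depends only on the $0$-th coordinate of $T^n \omega$, hence the product $\rho_n h_n$ equals $\psi(T^n \omega)$ for $\psi := (\brho \cdot \cH)\one$, which is bounded and H\"older in $\bd$. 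Consequently, both $\rho_n \one$ and $\rho_n h_n$ admit exponentially accurate approximations by $\bbF_{n-l, n+l}$-measurable random variables, which is exactly the input required by Gaposhkin's inequality.

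Thus, as in the proof of Proposition \ref{PrMIM-Ind}, for $\mathrm{P}$-almost every $\omega$ there exists $N_0(\omega)$ such that for all $N \geq N_0$, for all $|k| < N$, and for all $L > N^{0.01}$,
\begin{equation*}
\left|\sum_{n=k-L}^{k+L} \rho_n \one - 2L a\right| \leq \sqrt{L \log^3 L}, \qquad \left|\sum_{n=k-L}^{k+L} \rho_n h_n - 2L \fh\right| \leq \sqrt{L \log^3 L},
\end{equation*}
where $a = \mathrm{E}(\brho \one)$ and $\fh = \mathrm{E}(\brho \cH \one)$. Setting $\delta_N := N^{-1/8}$, so that $L_N := \delta_N N^{1/4} = N^{1/8} > N^{0.01}$ for large $N$, and dividing the displayed estimates by $2 L_N$, the right-hand sides become of order $\sqrt{\log^3 L_N / L_N}$, which tends to $0$ as $N \to \infty$. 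Since this bound is uniform in $|k| < N$ and in particular holds for all $k$ with $|k| \leq K \sqrt{N}$, both \eqref{UnifConv1} and \eqref{UnifConv2} follow, for the single deterministic sequence $\delta_N = N^{-1/8}$ that works simultaneously for every $\eps$ and $K$.

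The only step requiring care is the verification that Gaposhkin's inequality, invoked for $\rho_n \one$ in the proof of Proposition \ref{PrMIM-Ind}, applies verbatim to $\rho_n h_n$. This reduces to checking that $\psi = (\brho \cdot \cH)\one$ enjoys the same H\"older-type approximation by local (finite-range) functions on $\Omega$ as $\brho\, \one$. But this is immediate, since $\cH$ depends on a single coordinate and $\brho$ is already H\"older in $\bd$; no genuinely new probabilistic input is required beyond what underlies Proposition \ref{PrMIM-Ind}.
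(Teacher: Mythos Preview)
Your proof is correct and follows essentially the same approach as the paper: reduce to the Gaposhkin estimate of Proposition~\ref{PrMIM-Ind} via the H\"older continuity of $\brho$ from Lemma~\ref{LmHolder}, and observe that multiplying by the single-coordinate function $\cH$ preserves the required exponential local approximation. The paper's own proof is a two-line sketch pointing to Proposition~\ref{PrMIM-Ind}; it additionally remarks that a general continuous $\cH$ on $\Omega$ (not just one depending on a single coordinate) can be handled by first approximating with a local function to accuracy $\eps/2$, a generality you do not need given the stated hypothesis $h_n=\cH(P_n,Q_n,R_n)$.
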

\begin{proof}
The proof of \eqref{UnifConv2} is very similar to the proof of Proposition \ref{PrMIM-Ind} so it can be left to the reader.
The proof of \eqref{UnifConv1} in case $\cH$ is a local function
(that is there exists $R$ such that $\cH(\omega)$ depends only on $(P_n, Q_n, R_n)_n$ with $|n|\leq R$)
is also similar to Proposition \ref{PrMIM-Ind}. To prove \eqref{UnifConv1} for general continuos function, it suffices
to approximate it by a local function with error less than $\eps/2.$
\end{proof}

Propositions \ref{PrUC-QP} and \ref{PrUC-Ind} complete the proof of Theorem \ref{ThEnvMixEx} for Examples \ref{ExQP}
and~\ref{ExInd}. To prove Theorem \ref{ThEnvMixEx} for Example \ref{ExPert} we need to take into account that
the walk is not allowed to remain at the same site at two consecutive moments of time.
Because of that, we consider $\xi$ at odd and at even
times separately and note that \eqref{UnifConv-Per} implies \eqref{UnifConv1} for both odd and even sublattices.

\section{Local Limit Theorem}
\label{ScLLT}
\begin{theorem}
\label{ThLLT}
If \eqref{RCMart}, \eqref{RC-Occ-V},  and \eqref{RC-Occ-Q}
hold
then
for each sequence $(k_N, y_N)$ such that $k_N/\sqrt{N}$ is bounded
we have
\begin{equation}
\label{LLT-a}
\lim_{N\to\infty}  \frac{\mathbb{P}\left(\xi(N)=(k_N, y_N)\right)}{\bP\left(\sqrt{\frac{bN}{a}}\cN\in
\left[k_N-\frac{1}{2}, k_N+\frac{1}{2}\right]\right)\rho(k_N,y_N)}=\frac{1}{a}
\end{equation}
where $a$ and $b$ are the constants from \eqref{RC-Occ-V} and \eqref{RC-Occ-Q} respectively.
\end{theorem}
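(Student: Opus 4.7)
The plan is to combine the semilocal CLT of Section \ref{ScSemiLoc} with a pointwise refinement of the mixing result from Section \ref{ScEnvMix}. Roughly speaking, the semilocal CLT gives the probability that $X_N$ lands in a mesoscopic interval around $k_N,$ while a refined mixing argument identifies how this probability is distributed among the individual sites in the interval, proportionally to $\rho.$

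First I would verify that the hypotheses \eqref{RCMart}, \eqref{RC-Occ-V}, \eqref{RC-Occ-Q} imply the hypotheses of Corollary \ref{CrBoot} (the required uniformity of the Berry-Esseen bound in the starting point follows from Theorem \ref{ThCLTRate} applied to shifted environments, using the translation invariance of the rate estimates). Then, for any small $\tgamma > 0,$ setting $L_N = N^\tgamma$ and $I_N = [k_N - L_N, k_N + L_N],$ the semilocal LT yields
$$\Prob(X_N \in I_N) = \frac{2L_N}{\sqrt{2\pi DN}}\exp\left(-\frac{k_N^2}{2DN}\right)(1 + o(1)).$$

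Second, I would establish a local equilibrium statement: for $(k, y)$ with $|k - k_N| \leq L_N,$
$$\Prob(\xi(N) = (k, y)) = \frac{\rho(k, y)}{\sum_{(n, y') \in I_N \times \{1,\dots,m\}} \rho(n, y')}\, \Prob(X_N \in I_N)\,(1 + o(1)).$$
This is the main step, to be proved by adapting the path-decomposition argument of Theorem \ref{ThEnvMix} to what is effectively a delta-function observable at $(k, y).$ One splits the trajectory at some time $N - M$ with $L_N^2 \ll M \ll \sqrt{N},$ uses the semilocal LT on the initial segment $[0, N - M]$ to approximate the distribution of $\xi(N - M),$ and then in the final segment of length $M$ appeals to exponential mixing of the walk restricted to a mesoscopic window (whose local invariant measure is proportional to $\rho$) combined with the Prohorov-type local LT for sums of geometric holding times, as in \eqref{KeySpaceTme}. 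The role of this local LT is to smear out the dependence on the precise value of $N$ within the final mesoscopic window, thereby yielding the proportionality to $\rho.$

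Finally, using \eqref{RC-Occ-V} to write $\sum_{(n, y') \in I_N \times \{1, \dots, m\}}\rho(n, y') = 2aL_N(1 + o(1))$ and substituting the outputs of the two previous steps at $(k, y) = (k_N, y_N),$ we obtain
$$\Prob(\xi(N) = (k_N, y_N)) = \frac{\rho(k_N, y_N)}{a\sqrt{2\pi DN}}\exp\left(-\frac{k_N^2}{2DN}\right)(1 + o(1)),$$
which coincides with $(1/a)\rho(k_N,y_N)\,\bP(\sqrt{bN/a}\,\cN \in [k_N - 1/2, k_N + 1/2])\,(1 + o(1))$ since $D = b/a,$ establishing \eqref{LLT-a}. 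The main obstacle is the local equilibrium step: Theorem \ref{ThEnvMix} applies only to self-averaging observables, whereas the LLT requires essentially a pointwise output. Extracting this pointwise information is what forces us to combine the semilocal LT with the Prohorov-type LLT for geometric holding times, rather than relying only on the averaged mixing of Theorem \ref{ThEnvMix}.
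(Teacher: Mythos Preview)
Your overall architecture---semilocal CLT plus time-smearing via the Prohorov LLT for holding times plus a local argument yielding proportionality to $\rho$---is indeed the skeleton of the paper's proof. But the ``local equilibrium'' step, which you correctly flag as the main obstacle, is not justified by the mechanism you propose, and the time scales you choose do not line up with the paper's.

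Concretely: you invoke ``exponential mixing of the walk restricted to a mesoscopic window.'' No such result is available here, and the walk is not restricted---in time $M$ it spreads over distance $\sqrt{M}\gg L_N$, so it leaves $I_N$ with overwhelming probability. What actually does the work in the paper is Corollary~\ref{CrExpVis}: after the Prohorov smearing turns $\Prob(\xi(N)=(k_N,y_N))$ into a time average $\frac{1}{\delta\sqrt{N}}\sum_{l}\Prob(\xi(l)=(k_N,y_N)\mid \xi(0)=(\tk,\ty))$, this average is exactly $(\delta\sqrt{N})^{-1}$ times the expected local time at $(k_N,y_N)$ over time $\delta\sqrt{N}$, and Corollary~\ref{CrExpVis} identifies this with $\rho(k_N,y_N)\,\bE(\fl_{(k_N-\tk)/\brN,\,1/a})/\brN$. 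Integrating the Brownian local time then produces the factor $1/a$. You never mention the local time or Corollary~\ref{CrExpVis}, and without it your ``proportionality to $\rho$'' claim is circular---it is essentially a LLT for the final segment.

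There is also a scale mismatch. You split at $N-M$ with $M\ll\sqrt{N}$; but the Prohorov LLT applied to the holding times along a path of total length $\sim N$ (which is what is needed to smear the dependence on the exact value of $N$) yields a smearing window of size $\delta\sqrt{N}$, not $\delta\sqrt{M}$. The paper therefore places the path decomposition at the \emph{start}, up to expected time $N/2$, obtaining the $\delta\sqrt{N}$ smearing; then applies the semilocal CLT (Corollary~\ref{CrBoot}) over the next $\sim N/2$ steps at scale $N^{1/5}$; and only then uses the expected local time over the final $\leq\delta\sqrt{N}$ steps. Your ordering (semilocal first, path decomposition last, on a segment of length $M\ll\sqrt{N}$) would require a separate justification of why smearing over only $\sqrt{M}$ suffices and how to pass from interval-scale information at time $N-M$ to pointwise conditioning---neither of which you address.
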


\begin{proof}
  We use the same notation as in Section \ref{ScEnvMix}. In particular we choose a small constant
  $\eps_2$ and let $\delta$ be as in the proof of Theorem \ref{ThEnvMix}.
  We have
\begin{equation}
\label{Deco}
 \mathbb{P}(\xi(N)=(k_N,y_N))=\sum_W \sum_j
\mathbb{P}(A_{W, \delta, j}) \mathbb{P}\Big(\xi(N)=(k_N,y_N)|A_{W, \delta, j}\Big),
\end{equation}
where the sum is over all admissible paths $W.$

Given $\brR$ denote by
$\cS_\brR(N)$ the set of paths in $\cS(N)$ whose endpoint $e(W)=(x(W), y(W))$ satisfies $|x(W)|\leq \brR\sqrt{N}.$
Pick $\brR\gg 1$ and divide the sum \eqref{Deco} into three parts.

(I) If $W\in \cS_\brR(N)$ and $|j|<N^\breps $ then \eqref{KeySpaceTme} allows us to replace
$$ \mathbb{P}(\xi(N)=(k_N,y_N)|A_{W, \delta, j}) $$
by
$$ \frac{1}{\delta\sqrt{N}} \sum_{l=1}^{\delta \sqrt{N}} \mathbb{P}\Big(\xi(b_{N,j,l})=(k_N, y_N)|\xi(0)=e(W)\Big).
$$
where $b_{N,j,0}=\frac{N}{2}-\delta (j+1) \sqrt{N},$ $b_{N, j,l}=b_{N, j, 0}+l$ (see \eqref{DefBNJL}).
Divide
$\integers$ into segments $I_p$ of length $L_N=N^{1/5}.$ Let $\tk_p$ be the center of $I_p.$
We split the above sum as
$$
\sum_p \sum_{\tk \in I_p \atop \ty\in \{1,\dots,m\}}
\mathbb{P}(\xi(b_{N,j,0})=(\tk, \ty)|\xi(0)=e(W))
\;\;\frac{\sum_{l=1}^{\delta \sqrt{N}} \mathbb{P}(\xi(l)=(k_N, y_N)|\xi(0)=(\tk, \ty))}{\delta\sqrt{N}}
$$

Denote  $\brN=\delta^{1/2} N^{1/4}.$ By Corollary \ref{CrExpVis}
if $|k_N-\tk_p|\leq \brR\sqrt{\brN}$ and $\tk\in I_p$ then
$$ \frac{\sum_{l=1}^{\delta \sqrt{N}} \mathbb{P}(\xi(l)=(k_N, y_N)|\xi(0)=(\tk, \ty))}{\delta\sqrt{N}}\sim
\frac{\bE(\fl_{(k_N-\tk)/\brN), 1/a})}{\brN} \rho_{k_N, y_N} $$
\begin{equation}
\label{AvLLT}
\hskip6.1cm
\sim \frac{\bE(\fl_{(k_N-\tk_p)/\brN), 1/a})}{\brN} \rho_{k_N, y_N}
\end{equation}
where the last step uses that $|k_N-\tk_p|\ll \sqrt{N}. $

On the other hand Corollary \ref{CrBoot}
and Theorem \ref{ThCLTRate} show that
$$ \sum_{\tk \in I_p \atop \ty\in \{1,\dots,m\}} \mathbb{P}(\xi(b_{N,j,0})=(\tk, \ty)|\xi(0)=e(W))
=\frac{L_N(1+o_{N\to\infty}(1))}{\sqrt{\pi DN}}
\exp\left(-\frac{(x(W)-\tk_p)^2}{DN}\right)
$$
\begin{equation}
\label{SegSum}
=\frac{L_N(1+o_{N\to\infty}(1))}{\sqrt{\pi DN}}
\exp\left(-\frac{(x(W)-k_N)^2}{DN}\right)
\end{equation}
where $DN=2D(\frac{N}{2})$ appears in the above expression since
$b_{N,j,0}=\frac{N}{2}+\cO\left(N^{(1/2)+\eps}\right).$

Next, if
\begin{equation}
\label{IntRange}
|k_N-\tk_p|\geq \brR\sqrt{\brN}
\end{equation}
then
\begin{equation}
\label{DeltaNSum}
\frac{\sum_{l=1}^{\delta \sqrt{N}} \mathbb{P}(\xi(l)=(k_N, y_N)|\xi(0)=(\tk, \ty))}{\delta\sqrt{N}}\leq
\end{equation}
$$\mathbb{P}\left(\xi\text{ visits } (k_N, y_N) \text{ before time } \delta\sqrt{N}|\xi(0)=(\tk, \ty)\right)\times$$
$$\mathbb{E}(\Card(l\leq \sqrt{N}: \xi(l)=(k_N, y_N))|\xi(0)=(k_N, y_N)) .$$
The first factor is $\cO\left(e^{-c(k_N-\tk)^2/[\brN^2]} \right)$
by the Azuma inequality
and the second factor is $\cO(\brN)$ by  Lemma \ref{LmUILT},
so in case \eqref{IntRange} we have
\begin{equation}
\label{OccFar}
\frac{\sum_{l=1}^{\delta \sqrt{N}} \mathbb{P}(\xi(l)=(k_N, y_N)|\xi(0)=(\tk, \ty))}{\delta\sqrt{N}}
\leq \frac{C}{\brN} \exp\left(-\frac{c(k_N-\tk_p)^2}{\brN^2}\right).
\end{equation}
Hence (see \eqref{SegSum})
$$ \sum_{\tk \in I_p \atop \ty\in \{1,\dots,m\}} \mathbb{P}(\xi(b_{N,j,0})=(\tk, \ty)|\xi(0)=e(W))\;\;
\frac{\sum_{l=1}^{\delta \sqrt{N}} \mathbb{P}(\xi(l)=(k_N, y_N)|\xi(0)=(\tk, \ty))}{\delta\sqrt{N}} $$
\begin{equation}
\label{FarSum}
\leq \frac{C L_N}{\brN \sqrt{N}} \exp\left(-\frac{c(k_N-\tk_p)^2}{\brN^2}\right).
\end{equation}

Next, we perform the summation over $p.$
Equations \eqref{AvLLT}, \eqref{SegSum}, \eqref{FarSum} show that in case (I)
$$ \frac{1}{\delta\sqrt{N}} \sum_{l=1}^{\delta \sqrt{N}} \mathbb{P}(\xi(b_{N,j,l})=(k_N, y_N)|\xi(0)=e(W))$$
$$ =\sum_{|k_p-\tk_N|<\brR\sqrt{\brN}}
\frac{L_N(1+o_{N\to\infty, \brR\to\infty }(1))}{\sqrt{\pi DN}}
\frac{\bE(\fl_{(k_N-\tk_p)/\brN), 1/a})}{\brN} \rho_{k_N, y_N}
\exp\left(-\frac{(x(W)-k_N)^2}{DN}\right) $$
\begin{equation}
\label{NOver2}
=\frac{1}{\sqrt{\pi D N}}  \;\frac{\rho_{k_N, y_N}}{a}
\exp\left[-\frac{(x(W)-k_N)^2}{DN}\right]\;
\left(1+o_{N\to\infty, \brR\to\infty}(1)\right)
\end{equation}
where the last step relies on the fact that
$$ \int_{-\infty}^\infty \fl_{x, t} dx=t .$$


(II) $W\not\in \cS_\brR(N)$ and $|j|<N^\breps .$ In this case the same argument as in the proof
of \eqref{OccFar} shows that
$$ \frac{1}{\delta\sqrt{N}} \sum_{l=1}^{\delta \sqrt{N}}
\mathbb{P}\Big(\xi(b_{N,j,l})=(k_N, y_N)|\xi(0)=e(W)\Big)\leq \frac{\eps(\brR)}{\sqrt{N}}, $$
where $\eps(\brR)\to 0$ as $\brR\to\infty.$

(III) $|j|\geq N^\breps.$ 
Due to moderate deviation
estimate for sums of independent random variables applied to the sum~\eqref{SumGeom}.
$$ \Prob\left(\bigcup_W \bigcup_{j\geq N^\breps} A_{W, \delta, j} \right)\leq C e^{-cN^{2\breps}}. $$
Thus the main contribution to \eqref{Deco} comes from case (I). Performing the summation over
$W\in \cS_\brR(N)$ and $j\in [-N^\breps, N^\breps]$ and using \eqref{NOver2} and the CLT for $x(W)$
we obtain \eqref{LLT-a}.
\end{proof}

Theorem \ref{ThLLT} implies Theorem \ref{ThLLTEx}(a). To prove Theorem \ref{ThLLTEx}(b) we need to consider
$\xi(2N)$ and $\xi(2N+1)$ separately (see the discussion at the end of Section \ref{ScEnvMix}) and note that in Example
\ref{ExPert} $D=1$ since, due to equation \eqref{RM-SRW}, $\xi$
is a small perturbation of the simple random walk away from the origin.

\appendix
\section{A rough bound on large and moderate deviations.}
\label{AppLargeMod}

\begin{proposition}
  \label{PrLLNMart}
  Let $\{\cF_n\}$, $n\ge0,$ be a filtration and
$B_n$ be a sequence of $\cF_n$-measurable random variables such that
$B_0=0$ and $\Delta_n=B_n-B_{n-1}$ satisfies
for $n\leq N$ the following estimates:
\begin{equation}
\label{SubMart}
\bE(\Delta_n|\cF_{n-1})\leq -\eps \quad\text{where}\quad \eps\geq N^{-1/2}
\end{equation}
and for some positive constants $c, K$
\begin{equation}
\label{Cramer}
\bE(e^{c |\Delta_n|}|\cF_{n-1})\leq K.
\end{equation}

Then there is a constant $\brc=\brc(c, K)>0$ such that
$$\bP(B_N\geq 0)\leq \begin{cases} e^{-\brc \sqrt{\eps N}} & \textrm{if }\eps\geq N^{-1/3} \\
 N e^{-\brc \eps^2 N}  & \textrm{if }\eps<N^{-1/3}. \end{cases} $$
\end{proposition}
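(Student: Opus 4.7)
The plan is to apply the classical Cram\'er/Bernstein exponential-moment method: compare $B_N$ to an exponential supermartingale and use Markov's inequality. The first step is to bound the conditional moment generating function of $\Delta_n$. For $\lambda \in (0, c/2]$, I would expand $\bE(e^{\lambda \Delta_n}\mid\cF_{n-1})$ in a Taylor series and use \eqref{Cramer} (via $\bE(|\Delta_n|^k \mid \cF_{n-1}) \le k!\,K/c^k$) to obtain, for some $C = C(c, K) > 0$,
$$
\bE(e^{\lambda \Delta_n}\mid\cF_{n-1}) \le 1 - \lambda \eps + C \lambda^2 \le \exp(-\lambda \eps + C\lambda^2).
$$
Iterating the conditional estimate yields $\bE(e^{\lambda B_N}) \le \exp(N(-\lambda \eps + C\lambda^2))$, and Markov's inequality then produces
$$
\bP(B_N \ge 0) \le \exp\bigl(N(-\lambda \eps + C\lambda^2)\bigr)
$$
for every admissible $\lambda$.

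Next, I would optimize $\lambda$ in each of the two regimes. In the small-$\eps$ regime $\eps < N^{-1/3}$, the choice $\lambda = \eps/(2C)$ is admissible (since $\eps$ is small) and gives the Gaussian-type exponent $-\eps^2 N/(4C)$. The extra factor of $N$ in the stated bound appears naturally via a preliminary truncation: \eqref{Cramer} combined with a union bound gives $\bP(\max_{n\le N} |\Delta_n| > L) \le NK e^{-cL}$, and on the complement one runs the exponential-martingale argument with deterministically bounded increments. In the large-$\eps$ regime $\eps \ge N^{-1/3}$, I would take $\lambda = \alpha/\sqrt{\eps N}$ for a small constant $\alpha > 0$; then $\lambda\eps N = \alpha\sqrt{\eps N}$ while $C\lambda^2 N = C\alpha^2/\eps \le C\alpha^2 N^{1/3} \le C\alpha^2 \sqrt{\eps N}$, using precisely that $\eps \ge N^{-1/3}$. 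With $\alpha$ small enough the linear term dominates, yielding $\bP(B_N \ge 0) \le \exp(-\brc \sqrt{\eps N})$.

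The main delicate point is the joint admissibility of $\lambda$: it must be small enough for the Taylor expansion to converge (i.e.\ $\lambda \le c/2$), while simultaneously the quadratic error $C\lambda^2 N$ must remain dominated by the main exponent $\lambda\eps N$. Both choices above meet these constraints, and the dichotomy in the conclusion simply records which of the two competing terms controls the exponent, depending on the size of $\eps$ relative to the crossover scale $N^{-1/3}$. The large-$\eps$ bound is deliberately coarse — a fully optimized Bernstein argument would yield the Gaussian rate $\exp(-c\eps^2 N)$ throughout — but this simpler form suffices for the applications of this proposition in the main text.
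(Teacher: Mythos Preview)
Your argument is correct and follows the same Cram\'er--Chernoff strategy as the paper: bound the conditional moment generating function of the increments, iterate to control $\bE(e^{\lambda B_N})$, then apply Markov with a regime-dependent choice of $\lambda$. The one technical difference is that the paper first truncates the increments from above at a level $A$ (taking $A=\sqrt{\eps N}$ or $A=\eps^2 N$ in the two regimes) so as to use the elementary inequality $e^x\le 1+x+x^2$ for bounded $x$, and then controls $\bP(\tilde B_N\neq B_N)$ by a union bound---this is exactly where the factor $N$ in the small-$\eps$ bound originates, as you correctly surmised. Your direct route, bounding all moments via $\bE(|\Delta_n|^k\mid\cF_{n-1})\le k!\,K/c^k$ and summing the Taylor series for $\lambda\le c/2$, avoids the truncation entirely and in fact yields the cleaner bound $e^{-\brc\eps^2 N}$ without the prefactor $N$ in the small-$\eps$ regime; your aside about truncation is therefore superfluous for your own proof, though it accurately explains the form of the stated inequality. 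Your final remark that a fully optimized Bernstein argument gives the Gaussian rate $e^{-\brc\eps^2 N}$ throughout is also correct, and indeed the paper's Remark following the proposition notes that the coarser $e^{-\brc\sqrt{\eps N}}$ bound already suffices for almost all applications in the text.
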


\begin{remark}
The first case ($\eps\geq N^{-1/3}$) is sufficient for all the applications given in this paper except that
one would get worse constants in Section \ref{SSRates}.
\end{remark}

\begin{proof}
Suppose first that $\eps\geq N^{-1/3}.$
  Let $s=\frac{c_1}{\sqrt{\eps N}}$ for a sufficiently small constant $c_1$
  (see \eqref{ExpQuad} and \eqref{CharLessOne} below for the precise conditions on $c_1.$) Set $A=\sqrt{\eps N}$
  and define
  $$ \tDelta_k=\Delta_k 1_{\Delta_k<A}, \quad
\tB_n=\sum_{k=1}^{n}\tDelta_k, \quad \phi_n=\bE\left(e^{s \tB_n}\right). $$
Then
$$ \phi_k=\bE\left(e^{s\tB_{k-1}} \bE\left(e^{s\tDelta_k}|\cF_{k-1}\right)\right). $$
Note that $s \tDelta_k\leq s A= c_1$ and so we can choose $c_1$ so small that
\begin{equation}
  \label{ExpQuad}
  e^{s\tDelta_k}\leq 1+s\tDelta_k+(s\tDelta_k)^2,
\end{equation}
and so
\begin{equation}
  \label{ExpQuadCond}
  \bE\left(e^{s\tDelta_k}|\cF_{k-1}\right)\leq 1+s\bE\left(\tDelta_k|\cF_{k-1}\right)+s^2 \bE\left(\tDelta_k^2|\cF_{k-1}\right).
\end{equation}
In view of \eqref{Cramer}
\begin{equation}
\label{M2Cond}
\bE(\tDelta^2|\cF_{k-1})\leq \bE(\Delta^2|\cF_{k-1})\leq \Const
\end{equation}
and since $A=\sqrt{\eps N}\geq N^{2/3}$ we have that for large $N$
\begin{equation}
\label{M1Cond}
\EXP(\tDelta_k|\cF_{k-1})\leq -\frac{2\eps}{3}.
\end{equation}
Note that $\DS \frac{s^2}{\eps s}=\frac{c_1}{\sqrt{\eps^3 N}}\leq c_1$ can be made as small as we wish by choosing $c_1$ small.
Hence \eqref{ExpQuadCond}, \eqref{M2Cond} and \eqref{M1Cond} show that we can choose $c_1$ so small that
\begin{equation}
\label{CharLessOne}
\bE\left(e^{s\tDelta_k}|\cF_{k-1}\right)\leq
1-\frac{\eps s}{2}.
\end{equation}
Accordingly
$$ \bE\left(e^{s \tB_N}\right)\leq \left(1-\frac{\eps s}{2}\right)^N . $$
Thus for large $N$
\begin{equation}
\label{ControlTB}
\bP(\tB_N\geq 0)\leq e^{-s\eps N/4}.
\end{equation}
Next for each $n$
$$ \Prob(\Delta_n\geq A)\leq \Prob\left(e^{c \Delta_n}\geq e^{cA}\right)\leq   K e^{-cA}.$$
Hence
\begin{equation}
\label{AppAbove}
\bP(\tB_N\neq B_N)\leq N \max_{n\leq N} \Prob(\Delta_n\geq A)\leq
N e^{-c A}
\end{equation}
where the last inequality follows by \eqref{Cramer}.
Combining \eqref{ControlTB} with \eqref{AppAbove}
and using that $\eps s N=c_1\sqrt{\eps N},$ $A=\sqrt{\eps N}$
we obtain the required estimate in case $\eps\leq N^{-1/3}.$

Next consider the case where $\eps<N^{1/3}.$ We can also assume that
$\DS \eps \geq \sqrt{\frac{\ln \ln N }{N}}$
since the result is trivial (and useless) if $\eps^2 N<\ln \ln N$ because the RHS is greater than 1.
The argument in the case where
$\DS \sqrt{\frac{\ln \ln N }{N}}\leq \eps < N^{-1/3} $
is the same as in the case where $\eps>N^{-1/3}$
except that the parameters are chosen differently. Namely, we let $s=c_1 \eps$ where $c_1$ is
appropriately small and $A=N \eps^2.$ With this choice of parameters both
$sA=c_1 N \eps^3$ and $\frac{s}{\eps}=c_1$ still can be made as
small as needed. Accordingly we still have
$$ \bP(B_N\geq 0)\leq e^{-s\eps N/4}+N e^{-cA} $$
giving the required bound.
\end{proof}

\begin{remark}
We will often use the following consequence of Proposition \ref{PrLLNMart}: for any $\delta_1$ there are
positive constants
$C_1, C_2$ and $\delta_2$
such that if \eqref{SubMart} and \eqref{Cramer} hold and
\begin{equation}
\label{LargeDrift}
\eps>N^{\delta_1-\frac{1}{2}}
\end{equation}
then
\begin{equation}
\label{SEBound}
\bP(B_N\geq 0)\leq C_1 e^{-C_2 N^{\delta_2}}.
\end{equation}
\end{remark}

\section{Contraction properties of products of positive matrices.}
\label{AppPM}
The proof of relations \eqref{v_n}, \eqref{v_n1} follows from very general and well known contracting
properties of positive matrices which we now recall.

Let $\mathbb{A}_{\delta}$ be the set of positive $m\times m$ matrices such that for any $A=(A(i,j))\in\mathbb{A}_{\delta}$
one has $\min_{i,j,k} A(i,k)/A(j,k)\ge\delta$, where $\delta>0$ does not depend on $A$.
Let $\mathbb{R}_+^m$ be the cone of non-negative vectors in $\mathbb{R}^m$ and  $\mathbb{R}_{+,\delta}^m$ its sub-cone
of positive column vectors  with $\min_{i,j} x_i/x_j\ge\delta$.
Then $A\mathbb{R}_{+}^m\subset \mathbb{R}_{+,\delta}^m$ for any $A\in \mathbb{A}_{\delta}$.
Indeed, for any vector $x\ge0\ (x\not=0)$ we have
\begin{equation}\label{EqContr2}
\frac{(Ax)_i}{(Ax)_j}=\frac{\sum_{k=1}^m A(i,k)x_k}{\sum_{k=1}^m A(j,k)x_k}\ge \min_{k} \frac{A(i,k)}{A(j,k)}\ge \delta.
\end{equation}
Next denote by $\mathcal{C}^\delta$ the set of rays generated by vectors from $\mathbb{R}_{+,\delta}^m$.
Also, we introduce the convention that $\mathcal{C}^0$ is the set of rays generated by vectors from $\mathbb{R}_{+}^m$.
If $\bx, \by\in \mathcal{C}^\delta$ are two rays generated by vectors $x, y\in \mathbb{R}_{+,\delta}^m$ then
the Hilbert's projective distance between them is defined by
\[
\mathfrak{r}(\bx,\by)=\max_{i,j}\ln \frac{x_iy_j}{x_jy_i}.
\]
The set $\mathcal{C}^\delta$ equipped with this metric is a compact metric space. The action of
a matrix $A\in\mathbb{A}_{\delta}$ on $\mathcal{C}^0$ is naturally defined by its action on $\mathbb{R}_{+}^m$
and for $\bx\in \mathcal{C}^0$ we write $A\bx$ for the image of $\bx$ under the action of $A$.
\eqref{EqContr2} shows that in fact $A\mathcal{C}^0\subset \mathcal{C}^\delta$.

We need the following version of a (stronger) result from \cite[Chapter XVI, Theorem 3]{Bf}:
for all $A\in\mathbb{A}_{\delta}$ and all $\bx,\by\in \mathcal{C}^\delta$
\begin{equation}\label{EqContr}
\mathfrak{r}(A\bx,A\by)\le c\, \mathfrak{r}(\bx,\by),\ \text{ where }\ c=\frac{1-\delta}{1+\delta}.
\end{equation}

We are now in a position to prove \eqref{v_n} and \eqref{v_n1} from  Section \ref{ScMatrices}.
To this end, note first that \eqref{EqPositive} implies that $A_n\in \mathbb{A}_{\delta}$
with $\delta=m\bar{\varepsilon}^2$.

Next, for $a\le n$, the sets $\mathcal{C}_a\de A_n...A_{a}\mathcal{C}^0$ form a decreasing sequence,
$\mathcal{C}_a\supset \mathcal{C}_{a-1}$,
of compact subsets of $\mathcal{C}^\delta$ and therefore $\bigcap_{a\le n}\mathcal{C}_a \not=\emptyset.$
 Due to \eqref{EqContr}, for any two rays $\bx,\by\in\mathcal{C}^\delta$
the projective distance between their images in $\mathcal{C}_a$ decays exponentially as $a\to-\infty$:
\begin{equation}\label{EqContr1}
\mathfrak{r}(A_n...A_{a}\bx,A_n...A_{a}\by)\le c^{n-a} \mathfrak{r}(\bx,\by).
\end{equation}
(There is no loss of generality in assuming that $\bx,\by\in\mathcal{C}^\delta$
since $A_{a}\mathcal{C}^0\subset \mathcal{C}^\delta$.)

Therefore there is a unique ray $\bv_n=\bigcap_{a\le n}\mathcal{C}_a $ and $v_n$ in \eqref{v_n} is the unit vector
corresponding to $\bv_n$ which proves \eqref{v_n}. It remains to note that at a small scale the standard
distance between unit vectors (as in \eqref{v_n1}) is equivalent to the distance between rays generated by these vectors
which means that \eqref{EqContr1} is equivalent to \eqref{v_n1}.

\section{Regularity of $\rho$ and $\Delta.$}
\label{AppReg}
Here we discuss the regularity of $\rho$ and $\Delta$ which plays a key role in our analysis.
To this end we recall the formulas for these expressions obtained in \cite{DG4}.

Let $\Omega$ be a compact metric space and $T:\Omega\to \Omega$ be a
continuous map. (This meaning for the letter $T$ is reserved for Appendix \ref{AppReg} only.)

Throughout this section we assume that $(P, Q, R)_n(\omega)=(\cP, \cQ, \cR)(T^n \omega)$ where
$(\cP, \cQ, \cR)$ are continuous functions such that \eqref{stoch} and \eqref{EqC2*} are satisfied. Define
\begin{equation}
\label{ZetaAV}
\begin{aligned}
&\zeta(\omega)=\zeta_0(\omega), \quad A(\omega)=A_0(\omega), \quad \alpha(\omega)=\alpha_0(\omega), \quad \sigma(\omega)=\sigma_0(\omega)\\
&v(\omega)=v_0(\omega),\quad  l(\omega)=l_0(\omega)
\quad \lambda(\omega)=\lambda_0(\omega), \quad \tlambda(\omega)=\tlambda_0(\omega)
\end{aligned}
\end{equation}
then
\begin{equation}\label{defLambda}
\begin{aligned}
& \zeta_n=\zeta(T^n\omega),\quad A_n=A(T^n \omega),\quad \alpha_n=\alpha(T^n\omega), \quad \sigma_n(\omega)=\sigma(T^n\omega),\\
& v_n=v(T^n\omega),\quad l_n=l(T^n\omega),\quad \lambda_n=\lambda(T^n \omega),\quad \tlambda_n=\tlambda(T^n \omega).
\end{aligned}
\end{equation}
It is proven in \cite{DG4} that RWRE in bounded potential enjoy the property that
\begin{equation}
\label{CoBoundary}
\tlambda(\omega)=\frac{\tbeta(T\omega)}{\tbeta(\omega)} , \quad
\lambda(\omega)=\frac{\beta(T\omega)}{\beta(\omega)}
\end{equation}
for continuous functions $\beta, \tbeta.$
Moreover, the functions $\zeta(\cdot)$, $v(\cdot)$, $l(\cdot)$
are continuous in $\omega$.
The continuity of all other functions is implied by the continuity of $\zeta$, $v$, and $l$.

It is proven in \cite{DG4} that
$$ \rho_n(\omega)=\brho(T^n \omega) \text{ and }\Delta_n(\omega)=\bDelta(T^n\omega) $$
where
\begin{equation}
\label{RhoDelta}
\brho(\omega)=c \frac{l(\omega)}{\tbeta(\omega)}\quad \text{and} \quad
\bDelta(\omega)=\beta(T \omega) \sigma(\omega) v(\omega)+\cB(T\omega)-\cB(\omega)
\end{equation}
and
\begin{equation}
\label{DefcB}
 \cB(\omega)=\sum_{k=0}^{\infty}\beta(T^{k+1}\omega)\left[\zeta_0\dots\zeta_{k-1}v_k-(\sigma_k v_k)\one\right].
\end{equation}

\begin{proof}[Proof of Lemma \ref{LmSmooth}]
We claim that functions $l,$ $\beta, \tbeta, \sigma, v$ and $\zeta$ are $C^\infty.$
The smoothness of $\zeta$ and $v$ is proven in \cite[Lemma 12.1]{DG4}, and the smoothness of
$\beta$ is proven in \cite[equation (12.2)]{DG4}. The smoothness of $\sigma$ and $l$ can be established similar to
$v$ and the smoothness of $\tbeta$ is similar to $\beta.$ \eqref{RhoDelta} now shows that $\brho$ is $C^\infty$ and moreover that
the first term in the formula for $\bDelta$ is $C^\infty$.
It remains to show that
$\tcB(\omega):=\cB(\omega)-\cB(\omega+\gamma)$ is $C^\infty.$
From \eqref{DefcB} it follows that
$$ \tcB(\omega)=\beta(\omega+\gamma) [v(\omega)-(\sigma(\omega) v(\omega)) \one]+
\sum_{k=0}^{\infty} \beta(\omega+(k+1)\gamma)\Lambda_k(\omega) $$
where
$$\Lambda_k(\omega)=[\zeta(\omega)-I] \zeta(\omega+\gamma) \dots\zeta(\omega+(k-1)\gamma)
v(\omega+k\gamma).
$$
In view of the foregoing discussion it remains to show that for each $r$ there exist constants $C_r>0$
and $\theta_r<1$ such that
\begin{equation}
\label{LambdaCrSmooth}
 ||\Lambda_k||_{C^r}\leq C_r \theta_r^k.
\end{equation}
Denote
$$ v_{k,l}(\omega)=\zeta(\omega+(k-1-l)\gamma) \dots \zeta(\omega+(k-1)\gamma) v(\omega+k\gamma), $$
$$w_{k,l}=\frac{v_{k,l}}{||v_{k,l}||}, \quad
\eta_{k,l}=\ln \frac{||v_{k,l}||}{||v_{k,l-1}||}.
$$
We have
$$\Lambda_k(\omega)=[\zeta(\omega)-I] v_{k, k-1}(\omega)
=[\zeta(\omega)-I] \exp\left(\sum_{l=0}^{k-1} \eta_{k,l}\right) w_{k, k-1}(\omega)$$
$$=[\zeta(\omega)-I] \exp\left(\sum_{l=0}^{k-1} \eta_{k,l}\right) \left[w_{k, k-1}(\omega)-\one\right] $$
where the last equality holds since $\zeta(\omega)$ is a stochastic matrix. Using this representation we can
deduce \eqref{LambdaCrSmooth} from the following inequalities.
\begin{equation}
\label{WCrSmooth}
 ||w_{k,l}-\one ||_{C^r}\leq C_r \theta_r^l,
\end{equation}
\begin{equation}
\label{EtaCrSmooth}
 ||\eta_{k,l}||_{C^r}\leq C_r \theta_r^l.
\end{equation}
Indeed \eqref{EtaCrSmooth} shows that
$\DS \left\Vert \sum_{l=0}^{k-1} \eta_{k,l}\right\Vert_{C^r}\leq \brC_r $
and so
$\DS \left\Vert \exp\left( \sum_{l=0}^{k-1} \eta_{k,l}\right) \right\Vert_{C^r}\leq \brrC_r .$

We note that, by the definition of $\eta_{k,l},$ \eqref{EtaCrSmooth} follows from
\eqref{WCrSmooth}, so it suffices to show the latter inequality. We shall use the following fact.
\begin{lemma}
\label{ContrSmooth}
Let $\Phi_j(x, u)$ be a family of contractions of a manifold $X$ depending on a parameter $u$
from an open set $D\subset\reals^d.$
That is, we assume that there exist constants $K>0$ and $\theta<1$ such that
\begin{equation}
\label{DefContr}
|| D_x \Phi_j ||\leq \theta
\end{equation}
and for some $r\geq 2$
$$ ||\Phi_j||_{C^r(X\times D)}\leq K. $$
Assume also that there exists a common fixed point for all values of the parameter, that is,
there exists $p\in X$ such that for all $u\in D$
\begin{equation}
\label{Common}
\Phi_j(p, u)\equiv p.
\end{equation}
Then there are constants $\brK>0, \brtheta<1$ such that
\begin{equation}
\label{CrEstContr}
 \left\Vert \Phi_l\circ \Phi_{l-1} \circ\dots \circ \Phi_1 \right\Vert_{C^r(X\times D)} \leq \brK \brtheta^l.
\end{equation}
\end{lemma}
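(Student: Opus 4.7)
I will prove Lemma \ref{ContrSmooth} by induction on $r$, writing $\Psi_l = \Phi_l \circ \Phi_{l-1} \circ \dots \circ \Phi_1$ and bounding each mixed partial derivative $\partial_x^\alpha \partial_u^\beta \Psi_l$ via the multivariate chain rule. The three ingredients driving every estimate are: (i) the fibrewise contraction $\|D_x \Phi_j\| \le \theta$ from \eqref{DefContr}; (ii) the uniform $C^r$ bound $\|\Phi_j\|_{C^r} \le K$; and (iii) the common fixed-point relation $\Phi_j(p, u) \equiv p$ from \eqref{Common}, which implies $\Psi_l(p, u) = p$ for all $l, u$ and, upon differentiating in $u$ alone, $D_u^\gamma \Phi_j(p, u) = 0$ for every multi-index $\gamma$ with $|\gamma| \ge 1$.

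For the base case $r = 0$, iterating $\|\Phi_j(x, u) - p\| \le \theta \|x - p\|$ gives $\|\Psi_l - p\|_{C^0(X \times D)} \le \theta^l\,\mathrm{diam}(X)$. For $r = 1$, the $x$-derivative obeys $D_x \Psi_l = (D_x \Phi_l)|_{(\Psi_{l-1},u)} \cdot D_x \Psi_{l-1}$, yielding $\|D_x \Psi_l\| \le \theta^l$ directly. The $u$-derivative satisfies
\begin{equation*}
D_u \Psi_l = (D_x \Phi_l)|_{(\Psi_{l-1},u)}\, D_u \Psi_{l-1} + (D_u \Phi_l)|_{(\Psi_{l-1}(x,u),\, u)},
\end{equation*}
and the crucial point is that $(D_u \Phi_l)(p, u) = 0$ combined with the $C^2$ bound gives $\|(D_u \Phi_l)|_{(\Psi_{l-1}, u)}\| \le C \|\Psi_{l-1} - p\| \le C \theta^{l-1}$. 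The resulting recursion $a_l \le \theta\, a_{l-1} + C \theta^{l-1}$ yields $a_l \le C' l \theta^{l-1}$, and the polynomial factor $l$ is absorbed by replacing $\theta$ with any $\brtheta \in (\theta, 1)$.

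The inductive step for $r \ge 2$ uses Faà di Bruno to write $\partial_x^\alpha \partial_u^\beta \Psi_l$ as a finite sum of products of mixed partial derivatives of $\Phi_l$ evaluated at $(\Psi_{l-1}, u)$, multiplied by partial derivatives of $\Psi_{l-1}$ of orders at most $|\alpha| + |\beta|$. Each summand is controlled by one of two mechanisms: when the outermost $\Phi_l$-derivative acting on the top-order jet of $\Psi_{l-1}$ is a pure $x$-derivative, we harvest the factor $\theta$ per iteration from (i); whenever a $u$-derivative of $\Phi_l$ appears exposed, (iii) together with a Taylor expansion around $x = p$ extracts a factor $\|\Psi_{l-1} - p\| = O(\theta^{l-1})$. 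The remaining derivatives of $\Psi_{l-1}$ of order $< |\alpha|+|\beta|$ are controlled by the inductive hypothesis, while the single order-$(|\alpha|+|\beta|)$ derivative of $\Psi_{l-1}$ that survives in the top-order term enters through a linear recursion of the same shape as in the $r=1$ case. Summing these recursions produces bounds of the form $l^{N(r)} \theta^l$, which for any fixed $\brtheta \in (\theta, 1)$ is $O(\brtheta^l)$, establishing \eqref{CrEstContr}.

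The main obstacle is the combinatorial bookkeeping in the Faà di Bruno expansion once both $|\alpha|$ and $|\beta|$ are positive: one must verify that every term in the expansion carries either a contracting pure-$x$ factor $\theta$ from (i) or a Taylor factor $\|\Psi_{l-1} - p\|$ arising from (iii), so that no term escapes the geometric decay. Once this is in place, the induction is routine and the polynomial-in-$l$ prefactors are harmless after shrinking the effective contraction rate from $\theta$ to any $\brtheta \in (\theta, 1)$.
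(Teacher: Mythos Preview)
Your proposal is correct. The base cases $r=0,1$ match the paper's treatment essentially line for line; your observation that $D_u^\gamma\Phi_j(p,u)=0$ for all $|\gamma|\ge 1$, followed by Taylor expansion about $x=p$, is exactly how the paper handles the inhomogeneous term in the $B_l$ recursion.

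Where you diverge from the paper is in the inductive step. You carry out the Fa\`a di Bruno expansion of $\partial_x^\alpha\partial_u^\beta\Psi_l$ directly and classify each summand: the unique top-order term carries $D_x\Phi_l$ and hence a factor $\theta$; every other summand either contains a lower-order derivative of $\Psi_{l-1}$ (handled by induction) or is a pure $D_u^\gamma\Phi_l$ term (handled by the fixed-point Taylor argument). This is a valid and complete argument. The paper instead avoids the combinatorics entirely by a lifting trick: it regards the pair $(D_x\Psi_l, D_u\Psi_l)$ as new coordinates and observes that the extended maps
\[
\hat\Phi_j(x,A,B,u)=\bigl(\Phi_j(x),\; (D_x\Phi_j)A,\; (D_x\Phi_j)B+D_u\Phi_j\bigr)
\]
again form a contracting family with common fixed point $(p,0,0)$, so that the lemma applies to itself at level $r-1$. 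The paper's route is slicker and sidesteps the bookkeeping you flag as the main obstacle; your route is more explicit and makes the mechanism (contraction in $x$ plus vanishing of pure $u$-derivatives at $p$) transparent at every order. Either way the final bound is $l^{N(r)}\theta^l=O(\brtheta^l)$ for any $\brtheta\in(\theta,1)$.
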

To prove \eqref{WCrSmooth} we apply Lemma \ref{ContrSmooth} where $X$ is a neighborhood
of $\one$ in $(m-1)$-dimensional
projective space and $\Phi_l(w, \omega)=\zeta(\omega-(l-1)\gamma) w.$ To verify the conditions
of the lemma we note that $\zeta$ contracts the Hilbert metric on the positive cone and that
$\zeta(\cdot)\one\equiv \one$ since $\zeta$s are stochastic matrices. (See Appendix \ref{AppPM} for the
definition of the Hilbert's metric and the related contraction properties of positive matrices.)
This completes the proof of
Lemma~\ref{LmSmooth} modulo the proof of Lemma \ref{ContrSmooth} given below.
\end{proof}
\begin{proof}[Proof of Lemma \ref{ContrSmooth}]
Since the iterations of $\Phi$ converge to $p$ exponentially fast, we may assume that we
start in a small neighborhood of $p.$ By passing to local coordinates we may further assume
that $X$ is a bounded domain in $\reals^q$ for some $q.$

We prove \eqref{CrEstContr} by induction on $r.$ For $r=0$ the estimate follows by contraction mapping principle.
Let us now consider $r=1.$
Denoting
$$ x_l=(\Phi_l\circ \dots \circ \Phi_1)(x), \quad
A_l=D_x (\Phi_l\circ \dots \circ \Phi_1)x, \quad
B_l=D_u (\Phi_l\circ \dots \circ \Phi_1)x
$$
we get
\begin{equation}
\label{DerRec}
 A_l=D_x \Phi_l(x_{l-1}) A_{l-1}, \quad
B_l=D_x \Phi_l (x_{l-1}) B_{l-1}+D_u \Phi_l(x_l).
\end{equation}
Now the required bound for $A_l$ follows directly from \eqref{DefContr}.
To estimate $B_l$ we iterate the corresponding recurrence to get
$$ B_l=\sum_{j<l} D_x \Phi_{l} \dots D_x \Phi_{j+1} D_u \Phi_j(x_{j-1}) $$
To estimate the above sum we note that the terms with $j<l/2$ are exponetially small due to \eqref{DefContr}
while the terms with $j\geq l/2$ are exponentially small since \eqref{Common} implies that
$ D_u \Phi_j(p, u)\equiv 0 $ and so
$ D_u \Phi_j(x_{j-1})=O(\theta^j). $ This proves the claim for $r=1$ and completes the base of induction.

To perform the inductive step we assume that the Lemma holds for $r-1.$ In view of the foregoing discussion
to prove the result for $r$ we need to estimate $C^{r-1}$ norm of
$$(D_x (\Phi_l\circ\dots\circ \Phi_1),  D_u (\Phi_l\circ\dots\circ \Phi_1)).$$
In view of \eqref{DerRec} this reduces to studying the iterations of maps
$$ \hPhi_j(x, A, B, u)=(\Phi_j(x), (D_x \Phi_j) A, (D_x \Phi_j)B+D_u \Phi_j).$$
Since $\hPhi_j$ are contractions having common fixed point $(p, 0, 0)$ the required estimate is true by inductive assumption.
\end{proof}
\begin{proof}[Proof of Lemma \ref{LmHolder}]
We claim that functions $l,$ $\beta, \tbeta, \sigma, v$ and $\cB$ are H\"older continuous with respect to the metric
$\bd.$ In fact, the H\"older continuity of $\lambda$ and $v$ is proven in \cite[Appendix A]{DG4}.
The proof of H\"older continuity of $\tlambda, l, $ and $\sigma$ is very similar.
Next the H\"older continuity of $\beta$ and $\tbeta$ follows from the H\"older continuity of $\lambda$ and $\tlambda,$
relation \eqref{CoBoundary} and the Livsic Theorem \cite{Lv}. To prove the H\"older continuity of $\cB$ we note that
the second factor in the sum \eqref{DefcB} is exponentially small due to \eqref{prodZeta}.
Therefore the required statement is a consequence of Proposition \ref{PrHoldSum} below.
\end{proof}
\begin{proposition}
\label{PrHoldSum}
Given positive constants $a, c_1$ and $c_2$ there exists a constant \\
$b=b(a, c_1, c_2)$ such that if
$(X, \bd)$ is a metric space and
$$ H(x)=\sum_{k=1}^\infty H_k(x)$$
where
$$||H_k||_\infty\leq K e^{-c_1 k}, \quad ||H_k||_{C^a(X)}\leq K e^{c_2 k} $$
for some constant $K.$ Then $H\in C^b(X).$
\end{proposition}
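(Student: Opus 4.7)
The plan is a standard interpolation argument. First, the series defining $H$ converges uniformly and $\|H\|_\infty\le K\sum_k e^{-c_1k}<\infty$ by the sup-norm bound, so $H$ is well-defined. For each $k$ and each pair $x,y\in X$ the two hypotheses give the termwise bound
\begin{equation*}
|H_k(x)-H_k(y)|\le \min\bigl(2Ke^{-c_1 k},\; K e^{c_2 k}\,\bd(x,y)^a\bigr).
\end{equation*}
The first factor is useful when $k$ is large (tail where $H_k$ is uniformly small) while the second is useful when $k$ is small (where the $C^a$ bound is still cheap). The idea is simply to split the sum at the crossover index.

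Fix $x\neq y$ with $\bd(x,y)$ small, and let $k^\ast$ be the integer closest to the solution of $e^{-c_1 k}=e^{c_2 k}\bd(x,y)^a$, i.e.
\begin{equation*}
k^\ast = \left\lfloor \frac{a\,\log(1/\bd(x,y))}{c_1+c_2}\right\rfloor .
\end{equation*}
Using the $C^a$ bound for $k\le k^\ast$ gives a geometric sum dominated by its last term:
\begin{equation*}
\sum_{k=1}^{k^\ast} K e^{c_2 k}\,\bd(x,y)^a \le C\,K\, e^{c_2 k^\ast}\bd(x,y)^a.
\end{equation*}
Using the sup-norm bound for $k>k^\ast$ yields
\begin{equation*}
\sum_{k>k^\ast} 2K e^{-c_1 k} \le C\,K\, e^{-c_1 k^\ast}.
\end{equation*}
By the choice of $k^\ast$, both right-hand sides are of order $\bd(x,y)^{b}$ with
\begin{equation*}
b=\frac{a c_1}{c_1+c_2}.
\end{equation*}
Adding the two contributions gives $|H(x)-H(y)|\le C' K\,\bd(x,y)^b$ for a constant $C'=C'(a,c_1,c_2)$ depending only on the data, so $H\in C^b(X)$.

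There is no genuine obstacle here; the one thing to take a little care about is that the Hölder seminorms $\|H_k\|_{C^a}$ are only meaningful when $\bd(x,y)^a\le 1$, so for $\bd(x,y)\ge 1$ one simply uses the trivial bound $|H(x)-H(y)|\le 2\|H\|_\infty$ (absorbing it into the constant). The exponent $b=ac_1/(c_1+c_2)$ is what the method naturally produces, and is the value that should be recorded as the $b(a,c_1,c_2)$ promised in the statement.
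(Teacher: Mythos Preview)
Your proof is correct and follows essentially the same approach as the paper: split the sum at the crossover index where $e^{-c_1 k}$ and $e^{c_2 k}\bd(x,y)^a$ are comparable, bound the early terms by the H\"older estimate and the tail by the sup-norm estimate, then balance. You are somewhat more explicit than the paper in recording the optimal exponent $b=ac_1/(c_1+c_2)$ and in handling the edge cases (well-definedness of $H$, large $\bd(x,y)$), but the argument is the same.
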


\begin{proof}
For each $n$ we have the following estimate
$$ \left|H(x)-H(y)\right|\leq \left[\sum_{k=1}^{n-1} K \bd^a (x,y) e^{c_2 k}\right]+2
\sum_{k=n}^\infty K e^{-c_1 k}=
K \left[\frac{e^{c_2 n}-e^{c_2}}{e^{c_2}-1} \bd^a(x,y)+\frac{e^{-c_1 n}}{1-e^{-c_1}} \right]. $$
Choosing $n$ so that $e^{c_2 n} \bd^a(x,y)$ and $e^{-c_1 n}$ are of the same order we obtain the claim.
\end{proof}

The proof of Lemma \ref{LmPertMatr} relies on the following fact
\begin{proposition}
\label{PrContrNS}
Let $\Phi_n'(x)$ and $\Phi_n''(x)$ be two families of contractions of a bounded metric space
 $X.$ That is, assume that there are constants $K>0$ and $\theta<1$ such that
$ \diam(X)\leq K,$ and
for all $n\in\naturals$
$$ d(\Phi_n'(x_1), \Phi_n'(x_2))\leq \theta d(x_1, x_2),  \quad
d(\Phi_n''(x_1), \Phi_n'(x_2))\leq \theta d(x_1, x_2) . $$
If there are constants $C, \sigma$ such that for  each $x\in X$ and for all $n\in\naturals$
$$ d(\Phi_n'(x), \Phi_n''(x))\leq \frac{C}{n^\sigma} $$
then there is a constant $\brC$ such that for all $n\in\naturals$
$$ d_n:=d(\Phi'_n \dots \Phi'_1(x'), \Phi''_n \dots \Phi''_1(x''))\leq \frac{\brC}{n^\kappa}. $$
\end{proposition}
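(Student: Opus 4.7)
The plan is to set up the obvious one-step recursion for $d_n$ and then iterate. Write $y'_n=\Phi'_n\cdots\Phi'_1(x')$ and $y''_n=\Phi''_n\cdots\Phi''_1(x'')$, so $d_n=d(y'_n,y''_n)$. By the triangle inequality, inserting $\Phi'_n(y''_{n-1})$,
\begin{equation*}
 d_n\;\leq\; d\bigl(\Phi'_n(y'_{n-1}),\Phi'_n(y''_{n-1})\bigr)+d\bigl(\Phi'_n(y''_{n-1}),\Phi''_n(y''_{n-1})\bigr)\;\leq\;\theta\,d_{n-1}+\frac{C}{n^{\sigma}},
\end{equation*}
where the first term uses the uniform contraction of $\Phi'_n$ and the second uses the pointwise bound $d(\Phi'_n(x),\Phi''_n(x))\leq C/n^{\sigma}$ applied at $x=y''_{n-1}$.

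Iterating this recursion starting from $d_0\leq\diam(X)\leq K$, I would obtain
\begin{equation*}
 d_n\;\leq\;\theta^n K+\sum_{k=1}^{n}\theta^{\,n-k}\,\frac{C}{k^{\sigma}}.
\end{equation*}
The first term is exponentially small in $n$ and is therefore harmless. The main term is the convolution sum, which I would split at $k=\lfloor n/2\rfloor$. For $k\leq n/2$ one has $\theta^{n-k}\leq \theta^{n/2}$, so this part of the sum is at most $\theta^{n/2}\cdot C\sum_{k=1}^{\lfloor n/2\rfloor}k^{-\sigma}$, which decays exponentially. For $k>n/2$ one has $k^{-\sigma}\leq 2^{\sigma}n^{-\sigma}$, so this part is bounded by $\dfrac{2^{\sigma}C}{n^{\sigma}}\sum_{j=0}^{\infty}\theta^{j}=\dfrac{2^{\sigma}C}{(1-\theta)n^{\sigma}}$.

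Combining the two ranges yields $d_n\leq \brC/n^{\sigma}$ for a suitable constant $\brC$ depending only on $C$, $K$, $\theta$ and $\sigma$; in particular one can take $\kappa=\sigma$. The argument is essentially a discrete Gronwall estimate with a polynomially decaying inhomogeneity, and the only mild subtlety is the split of the geometric-convolution sum so that the geometric factor can absorb the early terms while the late terms inherit the $n^{-\sigma}$ rate; no further ingredients beyond the uniform contraction and the pointwise closeness of the two families are needed, and boundedness of $X$ enters only to bound the initial term $d_0$.
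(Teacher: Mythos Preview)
Your proof is correct and follows essentially the same route as the paper: set up the one-step recursion $d_n\le\theta d_{n-1}+C/n^{\sigma}$ via the triangle inequality, iterate, and bound the resulting geometric convolution. The paper's proof is terser (it simply says ``the result follows'' after writing the iterated sum), whereas you spell out the split at $k=\lfloor n/2\rfloor$ to extract the $n^{-\sigma}$ rate; this added detail is welcome, and your identification $\kappa=\sigma$ resolves the apparent mismatch of exponents in the statement.
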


\begin{proof}
Iterating the estimate
$ d_n\leq \theta d_{n-1}+\dfrac{C}{n^\kappa} $ we obtain
$ \displaystyle d_n\leq \theta^{n-1} d_1+\sum_{j=1}^{n-1} \dfrac{C \theta^{n-j}}{j^\kappa}. $
Since $d_1\leq K$ the result follows.
\end{proof}

\begin{proof}[Proof of Lemma \ref{LmPertMatr}]
\eqref{AKey} follows from Proposition \ref{PrContrNS} since the map
relating $\zeta_n$ to $\zeta_{n-1}$ is a contraction in the total variation distance
(see \cite[Appendix D]{DG2}) while the maps relating $v_{n}$ to $v_{n-1}$ and $l_n$ to $l_{n-1}$ are contractions
in the Hilbert metric. \eqref{ACor} follows from \eqref{AKey} and the explicit formulas relating
$A_n,$ $\lambda_n$ and $\tlambda_n$ to $\zeta_n,$ $v_n$ and $l_n.$ Next
$$ \frac{\bar\beta_n}{\beta_n}=
\frac{\bar\beta_1}{\beta_1}
\prod_{j=1}^{n-1} \left(\frac{\brlambda_j}{\lambda_j}\right). $$
Since the above series converges due to \eqref{ACor} we obtain that
$\DS \beta_+=\lim_{n\to+\infty} \frac{\bar\beta_n}{\beta_n}$
exists. The existence of
$\DS \beta_-=\lim_{n\to-\infty} \frac{\bar\beta_n}{\beta_n}$
and
$\DS \tbeta_\pm=\lim_{n\to\pm\infty} \frac{\overline{\tbeta}_n}{\tbeta_n}$
are similar.

Next the existence of $\mu_\pm,$ $a_\pm$ and $b_\pm$ follows from the existence of the above limits in view
of the formulae
\begin{equation}
\label{FormRhoDelta}
 \rho_n=\frac{c l_n}{\tbeta_n},  \quad
\Delta_n=\beta_n \sigma_n v_n+\cB_{n+1}-\cB_n
\end{equation}
with
$$
 \cB_n=\sum_{k=n}^{\infty}\beta_{k+1} \left[\zeta_n\dots\zeta_{k-1}v_k-(\sigma_k v_k)\one\right].
$$
proven in \cite{DG4}.

It remains to show that
\begin{equation}
\label{SameLim}
\beta_+=\tbeta^+\quad\text{and}\quad \beta_-=\tbeta_-.
\end{equation}
In view of  \eqref{RhoMNorm}
$$\rho_n P_n (\fm_{n+1}-\zeta_{n+1}^- \fm_n)=
\brrho_n \brP_n (\bar{\fm}_{n+1}-\brzeta_{n+1}^- \bar{\fm}_n).$$
However due to \eqref{FormRhoDelta} the ratio of the RHS to the LHS for $n\to\pm\infty$
equals to $\frac{\beta_\pm}{\tbeta_\pm}(1+o_{n\to\pm\infty}(1))$
proving \eqref{SameLim}.
\end{proof}

\end{document}